\date{\today}
\mopit{\Hol}{\cH ol}
\mopit{\Mod}{\cM od}
\newcommand\DMod[1]{\Mod(\cD_{#1})}                        
\newcommand\DHol[1]{\Hol(\cD_{#1})}                        
\newcommand\jo{\mathring{\jmath}}
\newcommand\kk{\Bbbk}
\newcommand\D{\cD}                                         
\newcommand\mD{{\mu\cD}}								   
\newcommand\opz{\cZ}									   
\newcommand\pd[1]{\frac{\partial}{\partial z_{#1}}}		   
\moprm{\rig}{rig}
\moprm{\can}{can}
\moprm{\Ann}{Ann}
\moprm{\slope}{slope}
\moprm{\irreg}{irreg}
\moprm{\gl}{\mathfrak{gl}}
\newcommand\Four{\mathfrak{F}}
\newcommand\four{F}
\newcommand\Rad{\mathfrak{R}}
\newcommand\rad{R}
\begin{document}
\title[Fourier transform and middle convolution for $\D$-modules]{Fourier transform and middle convolution for irregular $\D$-modules}
\author{D.~Arinkin}
\date{\today}
\begin{abstract}
In \cite{BE}, 
S.~Block and H.~Esnault constructed the local Fourier transform for $\D$-modules. We present
a different approach to the local Fourier transform, which makes its properties almost tautological. We apply the local Fourier transform to compute the local version of Katz's
middle convolution. 
\end{abstract}
\maketitle

\section{Introduction}

G.~Laumon defined the local Fourier transformations of $l$-adic sheaves in \cite{La}. 
In the context of $\D$-modules, the local Fourier transform was constructed by
S.~Bloch and H.~Esnault in \cite{BE}. One can also view the $\D$-modular
local Fourier transform as the formal microlocalization defined by R.~Garc{\'{\i}}a L{\'o}pez in \cite{GL}.

In this paper, we present another approach to the local Fourier transform. Roughly speaking,
the idea is to consider a $\D$-module on the punctured neighborhood of a point $x\in\A1$ as
a $\D$-module on $\A1$ (of course, the resulting $\D$-module is not holonomic). We then claim
that the Fourier transform of this non-holonomic $\D$-module is again supported on the formal
neighborhood of a point. This yields a transform for $\D$-modules on formal disk: the local
Fourier transform. Thus the local Fourier transform appears as the 
(global) Fourier transform applied to non-holonomic $\D$-modules of a special kind.

The main property of the local Fourier transform is that it relates the singularities of a
holonomic $\D$-module and those of its (global) Fourier transform. 
For instance, if $M$ is a holonomic $\D_{\A1}$-module,
the singularity of its Fourier transform $\Four(M)$ at $x\in\A1$ is obtained by the
local Fourier transform from the singularity of $M$ at infinity; see
Corollary~\ref{co:localfourier} for the precise statement. (Actually, `singularity' here
refers to the formal vanishing cycles functor described in Section~\ref{sc:disk}.)

The main property
is essentially the formal stationary phase formula of \cite{GL}; in the settings of \cite{BE}, it follows from \cite[Corollary 2.5]{BE}. 
One advantage of our definition of the local Fourier transform is that the main property
becomes a tautology: it follows from adjunctions between natural functors. 
On the other hand, the direct proof of the formal stationary phase formula 
(found in \cite{GL}) appears quite complicated. Using the main property, we give a simple 
conceptual proof of the invariance of the rigidity index under the Fourier transform, which is one of the main results of \cite{BE}.

We then develop a similar framework for another transform $\Rad$ of $\D$-modules. $\Rad$
is the Radon transform studied by A.~D'Agnolo and M.~Eastwood in \cite{AE} (we only consider $\D$-modules on $\p1$ in this paper, but \cite{AE} applies to $\p{n}$). 
One can also view $\Rad$ as a `twisted version' of the 
transform defined by J.~-L.~Brylinski in \cite{Br}; in a sense, $\Rad$ is also a particular case of the Radon transform defined by A.~Braverman and A.~Polishchuk in \cite{BP}. Finally,
$\Rad$ can be interpreted as Katz's additive middle convolution with the Kummer local system
in the sense of \cite{Ka}. We are going to call the Radon transform for $\D$-modules on $\p1$
the \emph{Katz-Radon transform}. Different approaches to $\Rad$ are summarized in Section~\ref{ssc:radon}.

We define the local Katz-Radon transform. It is an auto-equivalence of the category of $\D$-modules on the punctured formal disk. Similarly to the Fourier transform, the local Katz-Radon transform describes the effect of the (global) Katz-Radon transform on the `singularity' of $\D$-modules (see Corollary~\ref{co:localradon}).

Finally, we prove an explicit formula for the local Katz-Radon transform. This answers 
(in the settings of $\D$-modules) the question posed by N.~Katz in \cite[Section 3.4]{Ka}.

\subsection{Acknowledgments} 
I am very grateful to A.~Beilinson, S.~Bloch, and V.~Drinfeld for stimulating discussions. 
I would also like to thank the Mathematics Department of the University of Chicago for its hospitality.

\section{Main results}

\subsection{Notation}
We fix a ground field $\kk$ of characteristic zero. Thus, a `variety' is a `variety over $\kk$', `$\p1$' is `${\mathbb P}^1_\kk$', and so on. The algebraic
closure of $\kk$ is denoted by $\overline\kk$. 
For a variety $X$, we denote by $X(\overline\kk)$ the set of points of $X$ over $\kk$. By $x\in X$, we mean that $x$ is a closed point of $X$; equivalently, $x$ is a Galois orbit in
$X(\overline\kk)$.

We denote the field of definition of $x\in X$ by $\kk_x$.
If $X$ is a curve, $A_x$ stands for the completion of the local ring 
of $x\in X$, and $K_x$ stands for its fraction field. If $z$ is a local 
coordinate at $x$, we have $A_x=\kk_x[[z]]$, $K_x=\kk_x((z))$.

Let $K=\kk((z))$ be the field of formal Laurent series. (The choice of a 
local coordinate $z$ is not essential.) Denote by $$\D_K=K\left\langle\frac{d}{dz}\right\rangle$$ the ring of differential operators over 
$K$. Let $\DMod{K}$ be the category of left $\D_K$-modules.

The \emph{rank} of $M\in\DMod{K}$ is $\rk M=\dim_KM$. By definition, $M$ is \emph{holonomic} if $\rk M<\infty$. 
Denote by $\DHol{K}\subset\DMod{K}$ the full subcategory of holonomic 
$\D_K$-modules.

\subsection{Local Fourier transform: example}\label{sc:explicit}
The local Fourier transform comes in several `flavors': $\Four(x,\infty)$, $\Four(\infty,x)$, and $\Four(\infty,\infty)$. Here $x\in\A1$ (it is possible to reduce to the case $x=0$, although this is not immediate if $x$ is not $\kk$-rational). To simplify the exposition,
we start by focusing on one of the `flavors' and consider $\Four(0,\infty)$.
Fix a coordinate $z$ on $\A1$.

Let $K_0=\kk((z))$ be the field of formal Laurent series at $0$. 
Fix $M\in\DHol{K_0}$. Explicitly, $M$ is a finite-dimensional vector space over $K_0$
equipped with a $\kk$-linear derivation 
$$\partial_z:M\to M.$$ 
The inclusion 
\begin{equation}
\kk[z]\hookrightarrow\kk((z))\label{eq:laurent_to_poly}
\end{equation} 
allows us to view $M$ as a $\D$-module on
$\A1$. In other words, we consider on $M$ the action of the Weyl algebra
$$W=\kk\left\langle z,\frac{d}{dz}\right\rangle$$ of polynomial differential operators.
We denote this $\D_\A1$-module by $\jo_{0*}M$, where $\jo_0$ refers to the embedding of the
punctured formal neighborhood of $0$ into $\A1$. Of course, $\jo_{0*}M$ is not holonomic.

Actually, $\jo_{0*}M$ gives one of the two ways to view $M$ as a $\D_\A1$-module.
Indeed, \eqref{eq:laurent_to_poly} is a composition
\begin{equation*}
\kk[z]\hookrightarrow\kk[[z]]\hookrightarrow\kk((z)),
\end{equation*}
so $\jo_{0*}M=j_{0*}\jo_*M$, 
where $j_0$ (resp. $\jo$) is the embedding
of the formal disk at $0$ into $\A1$ (resp. the embedding of the punctured formal disk
into the formal disk). 
However, there are two dual ways to extend a $\D$-module across the puncture: $\jo_*$ and $\jo_!$, so we obtain another $\D_\A1$-module $$M_!=j_{0*}\jo_!M.$$ 

Consider now the Fourier transform $\Four(M_!)$. As a $\kk$-vector space, it coincides with
$M_!$, but the Weyl algebra acts on $\Four(M_!)$ through the automorphism
\begin{equation}
\label{eq:F}
\four:W\to W:\qquad\four(z)=-\frac{d}{dz},\four\left(\frac{d}{dz}\right)=z.
\end{equation}
We claim that $\Four(M_!)$ is actually a holonomic $\D$-module on the punctured formal
disk at infinity, extended to $\A1$ as described above. We call this holonomic $\D$-module the \emph{local Fourier transform} of $M$ and denote it by $\Four(0,\infty)M\in\DHol{K_\infty}$. 
That is, 
\begin{equation}
\Four(M_!)=\jo_{\infty*}(\Four(0,\infty)M),\label{eq:four_0_infty}
\end{equation}
where $\jo_\infty$ is the embedding of the punctured formal neighborhood at infinity into $\A1$. (Note that $!$-extension across the puncture is meaningless at $\infty$, because
$\infty\not\in\A1$.)

However, \eqref{eq:four_0_infty} does not completely determine $\Four(0,\infty)$, because the functor 
$\jo_{\infty*}$ (as well as $\jo_{0*}$ and $j_{0*}\jo_!$) is not fully
faithful. In other words, $\Four(M_!)$ has a well defined action of $W$, but we need to extend
it to an action of $\D_{K_\infty}$.
To make such extension unique, we consider topology on $M_!$. 

The definition of $\Four(0,\infty)M$ can thus be summarized as follows. 
$M_!$ has an action of $\kk[[z]]$ and a derivation $\partial_z$. Equip
$M_!$ with the $z$-adic topology (see Section~\ref{sc:topology}), and consider on $M_!$
the $\kk$-linear operators
\begin{equation}
\zeta=-\partial^{-1}_z:M_!\to M_!\qquad\partial_\zeta=-\partial^2_z z:M_!\to M_!,
\label{eq:localfourier}
\end{equation}
where $\zeta=1/z$ is the coordinate at $\infty\in\p1$. We then 
make the following claims.
\begin{enumerate}
\item\label{it:first} $\zeta:M_!\to M_!$ is well defined, that is, $\partial_z:M_!\to M_!$ is invertible.

\item $\zeta:M_!\to M_!$ is continuous in the $z$-adic topology, moreover, $\zeta^n\to 0$
as $n\to\infty$; in other words, $\zeta$ is $z$-adically contracting. Thus $\zeta$ defines a
an action of $K_\infty=\kk((\zeta))$ on $M_!$. 

\item $\dim_{K_\infty}M_!<\infty$, so $M_!$ with derivation 
$\partial_\zeta$ yields an object $$\Four(0,\infty)M\in\DHol{K_\infty}.$$ This 
defines a functor $\Four(0,\infty):\DHol{K_0}\to\DHol{K_\infty}$.

\item $\Four(0,\infty)$ is an equivalence between $\DHol{K_0}$ and the full subcategory
$$\DHol{K_\infty}^{<1}\subset\DHol{K_\infty}$$ 
of objects whose irreducible components have slopes smaller than $1$.

\item\label{it:last} The $z$-adic topology and the $\zeta$-adic topology on $M_!$ coincide.
\end{enumerate}

Let us compare this definition of $\Four(0,\infty)$ with that of \cite{BE}. In \cite{BE}, 
there is an additional restriction that $M$ has no horizontal sections. From out point
of view, this restriction guarantees that the two extensions $\jo_{0*}M$ and $j_{0*}\jo_!M$
coincide, which simplifies the above construction. If one defines $\Four(0,\infty)M$ following \cite{BE},
then \cite[Proposition 3.7]{BE} shows that $M\in\DHol{K_0}$ and
$\Four(0,\infty)M\in\DHol{K_\infty}$ are equal as $\kk$-vector spaces, while the $\D$-module structures are related
by \eqref{eq:localfourier}. The proof of this proposition shows that $\zeta$ is $z$-adically
contracting; this implies that the two definitions of the local Fourier transform agree.

For the local Fourier transform $\Four(\infty,\infty)$, the corresponding statements are contained in \cite[Proposition 3.12]{BE}.

\begin{remark}
One can derive the claims \eqref{it:first}--\eqref{it:last} from \cite{BE}, at least assuming
$M$ has no horizontal sections. We present a direct proof in
Section~\ref{sc:localfourierproof}.
\label{rm:BE}
\end{remark}

\subsection{Local Fourier transform}
Consider the infinity $\infty\in\p1$. Write 
\begin{equation}
\label{eq:g1le1}
\DHol{K_\infty}=\DHol{K_\infty}^{>1}\oplus\DHol{K_\infty}^{\le 1},
\end{equation} 
where the two terms correspond to full subcategories of $\D_{K_\infty}$-modules with slopes 
greater than one and less or equal than one, respectively. Further, split
\begin{equation}
\label{eq:le1} 
\DHol{K_\infty}^{\le1}=\bigoplus_{\alpha\in\A1}\DHol{K_\infty}^{\le 1,(\alpha)},
\end{equation}
according to the leading term of the derivation.

More precisely, consider the maximal unramified extension
$$K^{unr}_\infty=K_\infty\otimes_\kk\overline\kk=\overline\kk((\zeta)),$$
where $\zeta=1/z$ is the coordinate at $\infty$. For any $\beta\in\overline\kk$,
let $\ell_\beta\in\DHol{K_\infty^{unr}}$ be the vector space $K_\infty^{unr}$ 
equipped with derivation $$\partial_\zeta=\frac{d}{d\zeta}+\frac{\beta}{\zeta^2}.$$
Let $\DHol{K^{unr}_\infty}^{<1}\subset\DHol{K^{unr}_\infty}$ be the full subcategory of modules whose components have slopes less than one, and set
$$\DHol{K^{unr}_\infty}^{\le 1,(\beta)}=\ell_\beta\otimes\DHol{K^{unr}_\infty}^{<1}.$$
Finally, for $\alpha\in\A1$, we define full subcategory $\DHol{K_\infty}^{\le 1,(\alpha)}\subset\DHol{K_\infty}$ by
$$M\in\DHol{K_\infty}^{\le1,(\alpha)}\text{ if and only if }M\otimes\overline\kk\in\bigoplus_{\beta\in\alpha}\DHol{K_\infty^{unr}}^{\le 1,(\beta)}.$$
Here the direct sum is over all geometric points $\beta\in\A1(\overline\kk)$ corresponding
to the closed point $\alpha$.

\begin{remark} The local system $\ell_\alpha$ for $\alpha\in\A1$ is defined over $\kk_\alpha$. That is, $\ell_\alpha$ makes sense in $\DHol{K_\infty\otimes\kk_\alpha}$. 
We can therefore set
$$\DHol{K_\infty\otimes\kk_\alpha}^{\le 1,(\alpha)}=\ell_\alpha\otimes\DHol{K_\infty\otimes\kk_\alpha}^{<1}.$$
Then $\DHol{K_\infty}^{\le 1,(\alpha)}$ can be defined as the essential image of $\DHol{K_\infty\otimes\kk_\alpha}^{\le 1,(\alpha)}$ under the restriction of scalars functor
$$\DHol{K_\infty\otimes\kk_\alpha}\to\DHol{K_\infty}.$$
\end{remark}

\begin{THEOREM} \begin{enumerate} \item For any $x\in\A1$, there is
an equivalence 
$$\Four(x,\infty):\DHol{K_x}\to\DHol{K_\infty}^{\le1,(x)}$$ 
and a functorial 
isomorphism 
$$\Four(j_{x*}\jo_!(M))\iso\jo_{\infty*}(\Four(x,\infty)(M)).$$
The isomorphism is a homeomorphism in the natural topology
(defined in Section~\ref{sc:topology}). This determines $\Four(x,\infty)$ up to
a natural isomorphism (Lemma~\ref{lm:topology}).

\item For any $x\in\A1$, there is also an 
equivalence 
$$\Four(\infty,x):\DHol{K_\infty}^{\le1,(x)}\to\DHol{K_x}$$ 
and a functorial 
isomorphism 
$$\Four(\jo_{\infty*}(M))\iso j_{x*}\jo_!(\Four(x,\infty)(M)).$$
The isomorphism is a homeomorphism in the topology of Section 
\ref{sc:topology}, which determines $\Four(\infty,x)$ up to a natural isomorphism.

\item Finally,  there exists an equivalence 
$$\Four(\infty,\infty):\DHol{K_\infty}^{>1}\to\DHol{K_\infty}^{>1}$$ 
and a functorial isomorphism
$$\Four(\jo_{\infty*}(M))\iso\jo_{\infty*}(\Four(\infty,\infty)(M)).$$
The isomorphism is a homeomorphism in the topology of Section 
\ref{sc:topology}, which determines $\Four(\infty,\infty)$ up to a natural 
isomorphism.
\end{enumerate}
\label{th:localfourier}
\end{THEOREM}

The equivalences of Theorem~\ref{th:localfourier} are called the \emph{local Fourier
transforms}. We prove Theorem~\ref{th:localfourier}
in Section~\ref{sc:localfourierproof}.

\subsection{Fourier transform and rigidity}

The functor $\jo_{\infty *}$ has a left adjoint 
$$\Psi_\infty=\jo^*_\infty:\DHol{\A1}\to\DHol{K_\infty}:M\mapsto K_\infty\otimes M,$$
where $\DHol{\A1}$ is the category of holonomic $\D$-modules on $\A1$. Similarly, for any 
$x\in\A1$, the extension functor
$j_{x*}\jo_!$ has a left adjoint
$$\Phi_x:\DHol{\A1}\to\DHol{K_x},$$
which we call the \emph{formal vanishing cycles functor} (defined in Section~\ref{sc:curve}).

For $N\in\DHol{K_\infty}$, denote by 
$$N^{\le1,(x)}\in\DHol{K_\infty}^{\le 1,(x)}\quad (x\in\A1),\qquad N^{>1}\in\DHol{K_\infty}^{>1}$$
its components with respect to the decompositions \eqref{eq:g1le1}, \eqref{eq:le1}. 

\begin{corollary}\label{co:localfourier} Fix $M\in\DHol{\A1}$.
\begin{enumerate}
\item
For any $x\in\A1$, there are natural isomorphisms
\begin{gather}
\Phi_x(\Four(M))=\Four(\infty,x)\left(\Psi_\infty(M)^{\le 1,(x)}\right),\label{eq:ainfty}\\
\Psi_\infty(\Four(M))^{\le 1,(x)}=\Four(x,\infty)\Phi_x(M)\label{eq:inftya}.
\end{gather}

\item Similarly, there is a natural isomorphism 
\begin{equation}
\Psi_\infty(\Four(M))^{>1}=\Four(\infty,\infty)\left(\Psi_\infty(M)^{>1}\right)\label{eq:inftyinfty}.
\end{equation}
\end{enumerate}
\end{corollary}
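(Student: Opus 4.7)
The overall plan is Yoneda's lemma: the adjunctions $\Phi_x\dashv j_{x*}\jo_!$ and $\Psi_\infty\dashv\jo_{\infty*}$ together with the functorial identifications in Theorem~\ref{th:localfourier} give a chain of natural bijections on $\operatorname{Hom}$ spaces that yields each of \eqref{eq:ainfty}, \eqref{eq:inftya}, and \eqref{eq:inftyinfty}. Since the global Fourier transform $\Four$ is an auto-equivalence of $\DMod{\A1}$ (coming from the Weyl-algebra automorphism \eqref{eq:F}), each functorial identification in the theorem can be rewritten with $\Four^{-1}$ applied to the right-hand side, giving an inverse identification. This is the ``tautology'' announced in the introduction.

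To illustrate, for \eqref{eq:ainfty} I would test against an arbitrary $N\in\DHol{K_x}$:
\[
\operatorname{Hom}(\Phi_x\Four(M),N)\iso\operatorname{Hom}(\Four(M),j_{x*}\jo_!N)\iso\operatorname{Hom}(M,\Four^{-1}(j_{x*}\jo_!N)),
\]
using the adjunction and then Fourier inversion. The inverted form of Theorem~\ref{th:localfourier}(2) identifies $\Four^{-1}(j_{x*}\jo_!N)$ with $\jo_{\infty*}(\Four(x,\infty)N)$; then the adjunction $\Psi_\infty\dashv\jo_{\infty*}$ together with the orthogonality of the decomposition \eqref{eq:le1} collapse the right-hand side to $\operatorname{Hom}(\Psi_\infty(M)^{\le 1,(x)},\Four(x,\infty)N)$, which finally equals $\operatorname{Hom}(\Four(\infty,x)\Psi_\infty(M)^{\le 1,(x)},N)$ because $\Four(\infty,x)$ and $\Four(x,\infty)$ are mutually quasi-inverse equivalences. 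Yoneda yields \eqref{eq:ainfty}. Equation \eqref{eq:inftya} follows from the dual chain of bijections, using Theorem~\ref{th:localfourier}(1) in inverted form in place of (2); equation \eqref{eq:inftyinfty} is analogous, using Theorem~\ref{th:localfourier}(3) in the middle and $\Psi_\infty\dashv\jo_{\infty*}$ at both ends, testing against $N\in\DHol{K_\infty}^{>1}$.

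The main obstacle is not conceptual but a matter of bookkeeping. The intermediate objects $j_{x*}\jo_!N$ and $\jo_{\infty*}N$ are non-holonomic, so the entire argument takes place in $\DMod{\A1}$ rather than $\DHol{\A1}$. We need (i)~$\Four$ to be defined and to be an auto-equivalence on $\DMod{\A1}$, which is immediate from its description via the automorphism \eqref{eq:F}; (ii)~the isomorphisms of Theorem~\ref{th:localfourier} to be functorial on $\DMod{\A1}$ and to commute with $\Four^{-1}$ as used above, which follows from their characterization by the natural topology of Section~\ref{sc:topology}; and (iii)~the decompositions \eqref{eq:g1le1}--\eqref{eq:le1} to be orthogonal as full subcategories of $\DHol{K_\infty}$, so that projecting onto a summand amounts to restricting $\operatorname{Hom}$ to test objects in that summand. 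All three points are implicit in the setup of the preceding sections, and once they are in hand the three identities fall out of the same three-step manipulation.
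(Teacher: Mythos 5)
Your proposal is correct and matches the paper's (essentially unwritten) argument: the paper proves Corollary~\ref{co:localfourier} by declaring it an immediate consequence of Theorem~\ref{th:localfourier}, and the adjunction/Yoneda chain you spell out, using $\Phi_x\dashv j_{x*}\jo_!$, $\Psi_\infty\dashv\jo_{\infty*}$, the invertibility of $\Four$ on $\DMod{\A1}$, and the orthogonality of the slope decomposition, is exactly the ``tautological'' mechanism announced in the introduction. The only point worth noting is that your appeal to $\Four(x,\infty)$ and $\Four(\infty,x)$ being mutually quasi-inverse is not literally part of the theorem's statement, but it follows from combining its parts (1) and (2) with the topological uniqueness of Lemma~\ref{lm:topology} (or can be bypassed by writing $N=\Four(\infty,x)N'$ and running the chain in $N'$), so this is a presentational wrinkle rather than a gap.
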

\begin{proof}
Follows immediately from Theorem~\ref{th:localfourier}.
\end{proof}

Note that \eqref{eq:inftya} and \eqref{eq:inftyinfty} can be combined as follows:
\begin{equation}
\label{eq:infty}
\Psi_\infty\Four(M)=\Four(\infty,\infty)\left(\Psi_\infty(M)^{>1}\right)\oplus\bigoplus_{x\in\A1}\Four(x,\infty)\Phi_x(M).
\end{equation}

\begin{remark}\label{rm:phase}
Compare \eqref{eq:infty} with the formal stationary phase formula of \cite{GL}:
\begin{equation}
\Psi_\infty\Four(M)=\bigoplus_{x\in\p1}\cF^{(x,\infty)}M,
\label{eq:phase}
\end{equation}
where $\cF^{(x,\infty)}$ (resp. $\cF^{(\infty,\infty)}$) is the ordinary microlocalization of $M$ at $x$ (resp. the $(\infty,\infty)$ microlocalization of $M$). Actually, the corresponding
 terms in \eqref{eq:infty} and \eqref{eq:phase} are equal, so that for instance
$$\cF^{(x,\infty)}M=\Four(x,\infty)\Phi_x(M),$$
see Section~\ref{sc:Phi}.
\end{remark}

Because of Corollary~\ref{co:localfourier}, one can relate the `formal type' of $M$
with the `formal type' of $\Four(M)$. Actually, one has to assume that both $M$
and $\Four(M)$ are middle extensions of local systems from open subsets of $\A1$;
see Section~\ref{sc:formal type} for the
definitions and Section~\ref{sc:fourier and type} for precise statements.

In particular, the isotypical (that is, preserving the
formal type) deformations of $M$ and those of $\Four(M)$ are in one-to-one correspondence.
For instance, $M$ is \emph{rigid} (has no non-constant isotypical deformations) 
if and only if its Fourier transform is rigid. This statement goes back to N.~Katz
in $l$-adic settings (\cite[Theorem 3.0.3]{Ka}); the version for $\D$-modules is due
to S.~Bloch and H.~Esnault (\cite[Theorem 4.3]{BE}). Corollary~\ref{co:localfourier}
provides a conceptual proof of this statement.

\subsection{Katz-Radon transform}
Consider now the Katz-Radon transform. Fix $\lambda\in\kk-\Z$, and let $\D_\lambda$ be the corresponding
ring of twisted differential operators on $\p1$ 
(see Section~\ref{sc:TDOmodules} for details). Denote by $\DHol{\lambda}$
the category of holonomic $\D_\lambda$-modules on $\p1$. \emph{The Katz-Radon transform} is an equivalence
of categories $\Rad:\DHol{\lambda}\to\DHol{-\lambda}$. It is defined in \cite{AE}; we sketch several approaches to 
$\Rad$ in Section~\ref{ssc:radon}.

\begin{THEOREM} For any $x\in\p1$, there is
an equivalence 
$$\Rad(x,x):\DHol{K_x}\to\DHol{K_x}$$ called the 
local Katz-Radon transform and a functorial 
isomorphism 
$$\Rad(j_{x*}\jo_!(M))\iso j_{x*}\jo_!(\Rad(x,x)(M)).$$
The isomorphism is a homeomorphism in the topology of Section 
\ref{sc:topology}. This determines $\Rad(x,x)$ up to a natural isomorphism
(by Lemma \ref{lm:topology}).
\label{th:localradon}
\end{THEOREM}

\begin{remark*} It would be interesting to apply these ideas to other `one-dimensional integral transforms',
such as the Mellin transform of \cite{Mellin}.
\end{remark*}

It turns out that the local Radon transform $\Rad(x,x)$ can be described in simple terms.
Fix $x\in\p1$. For $\gamma\in\kk$, denote by $\cK_x^\gamma\in\DHol{K_x}$ the \emph{Kummer 
local system} with residue $\gamma\in\kk$. Explicitly, 
$\cK_x^\gamma$ is the vector space $K_x=\kk_x((z))$ equipped with the derivation
$$\partial_z=\frac{d}{dz}+\frac{\gamma}{z}.$$
Here $z$ is a local coordinate at $x$.

\begin{THEOREM} For $M\in\DHol{K_x}$ and $s\in\Q$ denote by $M^s$ the maximal submodule of 
$M$ whose all components have slope $s$. Then 
$$\Rad(x,x)M\simeq\bigoplus_sM^s\otimes\cK_x^{\lambda(s+1)}.$$ 
\label{th:computeradon}
\end{THEOREM}

The problem of computing the local Katz-Radon transform was posed in \cite[Section 3.4]{Ka}.
Theorem~\ref{th:computeradon} solves it in the settings of $\D$-modules. However, the proof
does not extend to the $l$-adic settings. 

\subsection{Organization}
The rest of this paper is organized as follows.

In Section~\ref{sc:formal disk}, we consider the category of holonomic $\D$-modules on the 
formal disk. In Section~\ref{sc:curves}, we review the basic functors
on holonomic $\D$-modules, and the notion of isotypical deformation of local systems.
We study the local Fourier transform in Section~\ref{sc:fourier} and the local Katz-Radon transform
in Section~\ref{sc:radon}. Finally, Section~\ref{sc:computeradon} we prove the explicit formula
for the Katz-Radon transform (Theorem~\ref{th:computeradon}).

\section{$\D$-modules on formal disk}\label{sc:formal disk}

\subsection{Functors on $\D$-modules} \label{sc:disk}
Let $A=\kk[[z]]$ be the ring of formal Taylor series; $K=\kk((z))$ is the 
field of fractions of $A$. Denote by $$\D_A=A\left\langle\frac{d}{dz}\right\rangle$$ the ring of 
differential operators over $A$ and by
$\DMod{A}$ the category of left $\D_A$-modules. Explicitly, 
$M\in\DMod{A}$ is an $A$-module $M$ equipped with a derivation 
$\partial_z:M\to M$. The \emph{rank} of $M$ is
$$\rk 
M=\dim_K(K\otimes_AM).$$
By definition, $M\in\DMod{A}$
is \emph{holonomic} if it is finitely
generated and has finite rank. Let $\DHol{A}\subset\DMod{A}$ be the full 
subcategory of holonomic $\cD_A$-modules.

We work with the following functors (all of them except $\Phi$ are standard.)

\begin{itemize}
\item \emph{Verdier duality}: For $M\in\DHol{A}$, denote 
its dual by $DM$. For $M\in\DHol{K}$, the dual 
$DM\in\DHol{K}$ is simply the dual vector space $M^\vee$ equipped
with the natural derivation.

\item \emph{Restriction}: For $M\in\DMod{A}$, set
$$\jo^*(M)=K\otimes_AM\in\DMod{K}.$$ Here $\jo$
is the embedding of the formal punctured disk into the formal disk.
Sometimes, we call the restriction functor 
$\jo^*:\DHol{A}\to\DHol{K}$ the \emph{formal nearby cycles} functor and denote 
it by $\Psi=\jo^*$.

\item \emph{Extensions}: Any $M\in\DMod{K}$ can be viewed as a 
$\D_A$-module using inclusion
$\D_A\subset\D_K$; the corresponding object is denoted $\jo_*(M)$. If 
$M\in\DHol{K}$, we set
$$\jo_!(M)=D(\jo_*(DM)).$$

\item \emph{Formal vanishing cycles}: 
The last functor is $\Phi:\DHol{A}\to\DHol{K}$. It can be defined as the left adjoint of
$\jo_!$ (or the right adjoint of $\jo_*$). See Section~\ref{sc:Phi} for a more explicit description.
\end{itemize}

\begin{proposition} \label{pp:functors}
\begin{enumerate}
\item The Verdier duality $D$ gives involutive anti-equivalences of
$\DHol{K}$ and $\DHol{A}$.

\item $\Psi$ and $\Phi$ are exact and commute with the duality.

\item $\jo^*\jo_*=\jo^*\jo_!=Id$.

\item $\jo_*$ is exact and fully faithful. $M\in\DHol{A}$ belongs to the essential image 
$\jo_*(\DHol{K})$ if and only if it satisfies the following equivalent conditions:
\begin{enumerate}
\item The action of $z$ on $M$ is invertible;
\item $\Ext^i_{\D_A}(\delta,M)=0$ for $i=0,1$;
\item $\Ext^i_{\D_A}(M,A)=0$ for $i=0,1$;
\item $\Hom_{\D_A}(\delta,M)=\Hom_{\D_A}(M,A)=0$;
\item $i^!M=0$ (in the derived sense).
\end{enumerate}
Here $\delta\in\DHol{A}$ is the $\D$-module of $\delta$-functions $\D_A/\D_Az$, and 
$A\in\DHol{A}$ stands for the constant $\D$-module $\D_A/\D_A(d/dz)$. Finally, 
$i$ is the closed embedding of the special point into the formal disk.

\item\label{it:!} $\jo_!$ is exact and fully faithful. $M\in\DHol{A}$ belongs to the essential image 
$\jo_!(\DHol{K})$ if and only if it satisfies the following equivalent conditions:
\begin{enumerate}
\item \label{it:!inv} The action of $d/dz$ on $M$ is invertible;
\item \label{it:!A} $\Ext^i_{\D_A}(A,M)=0$ for $i=0,1$;
\item $\Ext^i_{\D_A}(M,\delta)=0$ for $i=0,1$;
\item \label{it:!h} $\Hom_{\D_A}(A,M)=\Hom_{\D_A}(M,\delta)=0$;
\item $i^*M=0$ (in the derived sense).
\end{enumerate}

\item\label{it:adj} The following pairs of functors are adjoint: $(\Psi,\jo_*)$, 
$(\jo_*,\Phi)$, $(\Phi,\jo_!)$, $(\jo_!,\Psi)$. 
\end{enumerate}
\end{proposition}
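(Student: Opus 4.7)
The plan is to verify the six items in order, starting from the more formal ones and building up to the adjunctions in (6). I would treat (1)--(3) as largely formal. Verdier duality on $\DHol{K}$ is simply the $K$-linear dual equipped with the induced derivation, manifestly an involutive anti-equivalence on finite-dimensional $K$-vector spaces, while the duality on $\DHol{A}$ I would take from the standard theory of holonomic $\D$-modules. Exactness of $\Psi = K \otimes_A {-}$ follows from flatness of $K$ over the DVR $A$. The identity $\jo^*\jo_* = \mathrm{Id}$ is immediate because $K \otimes_A N \cong N$ as soon as $z$ acts invertibly on $N$; the analogous identity for $\jo_!$ follows from the definition $\jo_! = D\jo_*D$. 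Commutation of $\Psi$ with $D$ is a direct calculation on finite-dimensional $K$-vector spaces; commutation of $\Phi$ with $D$ becomes part of the content of (6).

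Parts (4) and (5) are dual to each other via $\jo_! = D\jo_*D$, $\delta \cong DA$ (up to a shift), and $i^! = D i^* D$, so I would carry out (4) in detail and derive (5). Exactness and full faithfulness of $\jo_*$ are formal, the latter reducing to the observation that any $\D_A$-linear map between $\D_K$-modules is automatically $\D_K$-linear because $z$ acts invertibly on both sides. The essential-image characterization is where actual computation happens; the key tool is the two-term free resolution
$$0 \to \D_A \xrightarrow{\cdot z} \D_A \to \delta \to 0,$$
which presents $\Ext^i_{\D_A}(\delta,M)$ as the kernel and cokernel of multiplication by $z$ on $M$. From this, equivalence of the conditions (a), (b), (d), and the translation into $i^!M = 0$ via the standard formula for $i^!$ on the formal disk all drop out immediately. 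The symmetric condition (c) on $\Ext^i_{\D_A}(M,A)$ is handled by combining a finite free presentation of the holonomic module $M$ with the analogous resolution of $A$ by $\partial_z$, or more cleanly by appealing to Verdier duality to swap the arguments.

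Part (6) is where I expect the main obstacle. The adjunctions $(\Psi,\jo_*)$ and $(\jo_!,\Psi)$ are formal: the first is the usual tensor--forget adjunction between $\D_A$- and $\D_K$-modules, and the second follows by dualizing the first and using commutation of $\Psi$ with $D$. The subtle pair is $(\jo_*,\Phi)$ and $(\Phi,\jo_!)$, i.e.\ verifying that the right adjoint of $\jo_*$ and the left adjoint of $\jo_!$ both exist on holonomic modules and coincide; this is equivalent to showing that $\Phi$ commutes with Verdier duality. I would construct $\Phi(N)$ explicitly, most naturally as a suitable quotient of $\Psi(N)$ built from the canonical map $\jo_!\Psi N \to N$, and then check both universal properties directly. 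The explicit description promised in Section~\ref{sc:Phi} should make $D$-equivariance manifest, at which point the two remaining adjunctions collapse to a single statement about the same functor.
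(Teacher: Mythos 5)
Your treatment of (1), (3), (4), (5) is a reasonable, more computational route than the paper's (the paper deduces everything in Section~\ref{sc:proofs} from the decomposition into irregular/regular and, over a general $\kk$, unipotent/non-unipotent pieces, using the linear-algebra model \eqref{eq:PhiPsi}), and the resolution $0\to\D_A\xrightarrow{\cdot z}\D_A\to\delta\to0$ does what you want for the essential-image criteria. But the heart of the proposition is (2) and (6) for $\Phi$, and there your plan has a genuine gap: you propose to construct $\Phi(N)$ ``as a suitable quotient of $\Psi(N)$ built from the canonical map $\jo_!\Psi N\to N$''. No such construction can work, because $\Phi$ is not a quotient (nor a sub) of $\Psi$ on $\DHol{A}$. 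The simplest counterexample is $N=\delta=\D_A/\D_Az$: here $\Psi(\delta)=K\otimes_A\delta=0$ since $\delta$ is $z$-torsion, while $\Phi(\delta)\cong K$ is the constant rank-one $\D_K$-module. (Check via the adjunction $(\Phi,\jo_!)$: $\Hom_{\D_K}(\Phi(\delta),L)=\Hom_{\D_A}(\delta,\jo_!L)=\ker\bigl(z:\jo_!L\to\jo_!L\bigr)$, which is one-dimensional for $L=K$; or via the paper's construction in Section~\ref{sc:Phi}, where $M'$ for $M=\delta$ is the universal extension $0\to\delta\to M'\to A\to0$, i.e.\ $M'=\jo_!K$.) In the model \eqref{eq:PhiPsi} the same phenomenon is visible: $\Psi(V,V',\alpha,\beta)=(V,\beta\alpha+\id)$ while $\Phi(V,V',\alpha,\beta)=(V',\alpha\beta+\id)$, and $V'$ is not a quotient of $V$ unless $\alpha$ is surjective. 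So your proposed explicit model of $\Phi$ is wrong exactly on the modules (those with $\delta$-type subquotients) for which the statement has content, and with it collapses your argument for exactness of $\Phi$, its commutation with $D$, and the adjunctions $(\jo_*,\Phi)$, $(\Phi,\jo_!)$.

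To repair this you need a correct construction of $\Phi$: either the universal-extension description \eqref{eq:sequence_phi} (kill the maximal constant submodule $M^{hor}$, then take the universal extension by a constant module and apply $\jo^*$), or microlocalization $\mD\otimes_{\D_A}M$ as in Proposition~\ref{pp:phi_phase}, or the paper's route of reducing to the unipotent regular block where \eqref{eq:PhiPsi} makes everything explicit. Note also that, as the remark after the proof in Section~\ref{sc:proofs} points out, the coincidence of the left adjoint of $\jo_!$ with the right adjoint of $\jo_*$ (equivalently, a canonical isomorphism $D\Psi\iso\Psi D$ on $\DHol{A}$) is a normalization that has to be exhibited, not something that ``drops out''; your plan correctly identifies this as the crux but does not supply it.
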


We prove Proposition~\ref{pp:functors} in Section~\ref{sc:proofs}.

\subsection{Construction of $\Phi$}\label{sc:Phi}
Proposition~\ref{pp:functors} can be used to describe $\Phi$. By Proposition~\ref{pp:functors}\eqref{it:!}, we can identify $\DHol{K}$ with its image 
$\jo_!(\DHol{K})\subset\DHol{A}$. Then $\jo_!$ becomes the inclusion $\jo_!(\DHol{K})\hookrightarrow\DHol{A}$, and $\Phi$ is the left adjoint of
the inclusion. Fix $M\in\DHol{A}$. There is a unique up to isomorphism object $M'\in\jo_!(\DHol{K})$ together with a map $\can:M\to M'$ such that $\ker(\can)$ and
$\coker(\can)$ are constant $\D_A$-modules. Namely, let  
$$M^{hor}=A\otimes\Hom_{\D_A}(A,M)$$
be the maximal constant submodule of $M$, and let
$M'$ be the universal extension of $M/M^{hor}$ by a constant $\D_A$-module. We thus get a 
sequence of $\D_A$-modules:
\begin{equation}
0\to A\otimes\Hom_{\D_A}(A,M)\to M\to M'\to A\otimes\Ext^1_{\D_A}(A,M)\to 0.
\label{eq:sequence_phi}
\end{equation}
Note that $\Ext^1_{\D_A}(A,M)=\Ext^1_{\D_A}(A,M/M_{hor})$.
By Proposition~\ref{pp:functors}\eqref{it:!A}, $M'\in\jo_!(\DHol{K})$, and we define 
$\Psi(M)$ by $\jo_!\Psi(M)=M'$. 

Dually, one can construct $\Phi$ by presenting the right adjoint of the inclusion $\jo_*(\DHol{K})\hookrightarrow\DHol{A}$. 

We can also interpret $\Phi$ using the formal microlocalization of \cite{GL}. Recall the definitions.

Denote by $\mD$ the ring of formal microdifferential operators 
$$\mD=\left\{\sum_{i=-\infty}^k a_i(z)\left(\frac{d}{dz}\right)^i:a_i(z)\in A,k\text{ is not fixed}\right\}.$$
($\mD$ does not depend on the choice of the local coordinate $z$.) In \cite{GL}, $\mD$ is denoted by $\cF^{(c,\infty)}$,
where $K=K_c$.  We have a natural embedding $\cD_A\hookrightarrow\mD$. 

\begin{example} Consider $\jo_!M$ for $M\in\DHol{K}$. The action of $d/dz$ on $\jo_!M$ is invertible. One can check that it induces an
action of $\mD$ on $M$ (because $d/dz$ is nicely expanding on $\jo_!M$ in the sense of Section~\ref{sc:localfourierproof}). 
\label{ex:md}
\end{example}

\begin{proposition} For any $M\in\DHol{A}$, 
$$\mD\otimes_{\D_A}M=\jo_!\Phi(M).$$
\label{pp:phi_phase}
\end{proposition}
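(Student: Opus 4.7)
The plan is to identify the functor $M \mapsto \mD \otimes_{\D_A} M$ with the left adjoint of the inclusion $\jo_!(\DHol{K}) \hookrightarrow \DHol{A}$, which by the construction in Section~\ref{sc:Phi} coincides with $\jo_! \Phi$.

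First I observe that $\mD \otimes_{\D_A} A = 0$ and $\operatorname{Tor}^{\D_A}_1(\mD, A) = 0$. The constant $\D$-module admits the free resolution
$$0 \to \D_A \xrightarrow{\,\cdot (d/dz)\,} \D_A \to A \to 0,$$
and tensoring with $\mD$ produces the two-term complex $\mD \xrightarrow{\,\cdot (d/dz)\,} \mD$, whose kernel and cokernel both vanish because right multiplication by $d/dz$ is a bijection on $\mD$ ($d/dz$ being a unit there).

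Next I split the four-term sequence \eqref{eq:sequence_phi} into two short exact sequences and apply $\mD \otimes_{\D_A} -$. Since $M$ is holonomic, $\Hom_{\D_A}(A,M)$ and $\Ext^1_{\D_A}(A,M)$ are finite-dimensional over $\kk$, so the outer terms $A \otimes \Hom_{\D_A}(A,M)$ and $A \otimes \Ext^1_{\D_A}(A,M)$ are finite direct sums of copies of $A$. The previous acyclicity annihilates them and their $\operatorname{Tor}^1$, producing a canonical isomorphism
$$\mD \otimes_{\D_A} M \;\cong\; \mD \otimes_{\D_A} \jo_! \Phi(M).$$

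It then remains to identify $\mD \otimes_{\D_A} N$ with $N$ for any $N \in \jo_!(\DHol{K})$, using the multiplication map $\theta \otimes n \mapsto \theta \cdot n$ furnished by the $\mD$-action of Example~\ref{ex:md}. Surjectivity is immediate from the section $n \mapsto 1 \otimes n$. Injectivity is the main obstacle, because one must reconcile the purely algebraic tensor product with the topology implicit in the definition of $\mD$; my approach would be to filter $\mD$ by order (with associated graded the Laurent ring $A[\theta^{\pm 1}]$) and $N$ by its $z$-adic filtration, verify the statement first at the level of associated graded modules where it reduces to an elementary assertion, and then lift using completeness of both filtrations. Applying this to $N = \jo_!\Phi(M)$ concludes the proof.
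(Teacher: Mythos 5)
Your first two steps are sound and in fact spell out more carefully than the paper does the reduction that both arguments share: $\mD\otimes_{\D_A}A=0$ together with $\operatorname{Tor}^{\D_A}_1(\mD,A)=0$ (via the resolution $0\to\D_A\to\D_A\to A\to0$ and invertibility of $d/dz$ in $\mD$) reduces the proposition, through the sequence \eqref{eq:sequence_phi}, to showing that the natural map $N\to\mD\otimes_{\D_A}N$ is an isomorphism for $N=\jo_!M$, $M\in\DHol{K}$. The easy direction is also the same in both treatments: the $\mD$-action of Example~\ref{ex:md} splits the map, giving injectivity of $n\mapsto 1\otimes n$ (equivalently, surjectivity of your multiplication map).

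The genuine gap is in your final step, which is exactly where the whole content of the proposition sits. Your plan to filter $\mD$ by order and $N$ $z$-adically and to ``verify on associated graded, then lift by completeness'' does not go through as stated. First, the two filtrations are not compatible over $\D_A$ in any way that computes $\operatorname{gr}$ of the tensor product: the order filtration on $\mD$ and the $z$-adic filtration on $\jo_!M$ live over different filtered structures on $\D_A$, and there is no evident map from $\operatorname{gr}(\mD)\otimes\operatorname{gr}(N)$ whose kernel you control. Second, and more seriously, completeness arguments prove statements about a \emph{completed} tensor product, whereas the proposition concerns the algebraic one; the induced filtration on $\mD\otimes_{\D_A}N$ need not be separated, and a nonzero kernel of the multiplication map could sit precisely in the intersection of all filtration steps, invisible to the graded computation. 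Killing such a kernel requires a division-type statement: given a finite sum $\sum\theta_i\otimes n_i$ with $\sum\theta_i n_i=0$, one must rewrite the microdifferential operators $\theta_i$ modulo the relations coming from $\D_A$ acting on the $n_i$. The paper supplies exactly this missing input: via the embedding $\D_{K_\infty}\hookrightarrow\mD$ given by \eqref{eq:localfourier}, the module $\jo_!M$ is holonomic over $\D_{K_\infty}$ (this is the local Fourier transform), hence admits a cyclic vector, and the division theorem of Garc{\'\i}a L{\'o}pez \cite[Theorem 1.1]{GL} then yields the isomorphism. Your proposal needs either this argument or a genuine substitute for it; as written, the last step is a plausible-sounding plan rather than a proof, and its two key moves (graded comparison, lifting by completeness) are the ones that fail.
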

\begin{proof}
First, note that for the constant $\D$-module $A\in\DHol{A}$, we have $$\mD\otimes_{\D_A}A=0.$$ 
By \eqref{eq:sequence_phi}, it remains to check that
the natural map
\begin{equation}
\label{eq:m!}
\jo_!M\to\mD\otimes_{\D_A}\jo_!M,\quad M\in\DHol{K}
\end{equation}
is an isomorphism. Note that \eqref{eq:m!} is injective by Example~\ref{ex:md}.

We prove surjectivity of \eqref{eq:m!} using the local Fourier transform. Identify $K$ with $K_0$ for $0\in\A1$ (we prefer working at $0$ so that the coordinate $z$ on $\A1$ is
also a local coordinate at $0$). The local Fourier transform $\Four(0,\infty)M$ can be described in terms of $\mD$ as follows. Let $\zeta=1/z$ be the coordinate at
$\infty$, so $K_\infty=\kk((\zeta))$. Embed $\D_{K_\infty}$ into $\mD$ by \eqref{eq:localfourier} as 
$$\zeta\mapsto-\left(\frac{d}{dz}\right)^{-1},\qquad\frac{d}{d\zeta}\mapsto-\frac{d^2}{dz^2}z.$$
By Example~\ref{ex:md}, $\jo_!M$ has an action of $\mD$, and $\Four(0,\infty)M$ is obtained by restricting it to $\D_{K_\infty}$. In particular, $\jo_!M$ is holonomic
as a $\D_{K_\infty}$-module, and therefore it possesses a cyclic vector. Now the claim follows from the division theorem \cite[Theorem 1.1]{GL}.
\end{proof}

Since $\jo_!$ is fully faithful, Proposition~\ref{pp:phi_phase} completely describes $\Phi$. It also relates $\Phi$ and the formal microlocalization of \cite{GL}.
The formal microlocalization amounts to viewing $\mD\otimes_{\D_A}M$ as a $\D_{K_\infty}$-module; by Proposition~\ref{pp:phi_phase}, this $\D_{K_\infty}$-module
is the local Fourier transform of $\Phi(M)$ (cf. Remark~\ref{rm:phase}).

\subsection{Proof of Proposition~\ref{pp:functors}}
\label{sc:proofs}
Note that the category $\DHol{K}$ 
decomposes as a direct sum
$$\DHol{K}=\DHol{K}^{reg}\oplus\DHol{K}^{irreg},$$
where $\DHol{K}^{reg}$ (resp. $\DHol{K}^{irreg}$) 
is the full subcategory of regular (resp. purely irregular) submodules.
Similarly, there is a decomposition
$$\DHol{A}=\DHol{A}^{reg}\oplus\DHol{A}^{irreg}$$ (see \cite[Theorem III.2.3]{Mal}). 
All of the above functors respect this decomposition. Moreover, $\Psi$ restricts to
an equivalence
$$\DHol{A}^{irreg}\iso\DHol{K}^{irreg};$$ 
the inverse equivalence is 
$\jo_*=\jo_!$. Thus Proposition~\ref{pp:functors} is obvious in the case 
purely irregular modules. 

Let us look at the regular case. It is instructive to start with $\kk=\C$. Then the categories
have the following well-known descriptions, which we copied from \cite[Theorem II.1.1, Theorem II.3.1]{Mal}. 

\begin{itemize}
\item $\DHol{K}^{reg}$ is the category of local systems on a punctured disk. It is equivalent
to the category of pairs $(V,\rho)$, where $V$ is a finite-dimensional vector space and $\rho\in\Aut(V)$. Geometrically, $V$ is the space of nearby cycles and $\rho$ is the monodromy of a local system.

\item $\DHol{A}^{reg}$ is the category of perverse sheaves on a disk that are smooth away from the puncture. It is equivalent to the category of collections $(V, V',\alpha,\beta)$,
where $V$ and $V'$ are finite-dimensional vector spaces, and linear operators $\alpha:V\to V'$ and $\beta:V'\to V$ are such that $\alpha\beta+\id$ (equivalently, $\beta\alpha+\id$) is invertible. Geometrically, $V$ and $V'$ are the spaces of nearby and vanishing cycles, respectively.
\end{itemize}

Under these equivalences, the functors between $\DHol{A}^{reg}$ and $\DHol{K}^{reg}$ can be described as follows:
\begin{equation}\label{eq:PhiPsi}
\begin{aligned}
\Psi(V,V',\alpha,\beta)&=(V,\beta\alpha+\id)\cr
\Phi(V,V',\alpha,\beta)&=(V',\alpha\beta+\id)\cr
\jo_*(V,\rho)&=(V,V,\id,\rho-\id)\cr
\jo_!(V,\rho)&=(V,V,\rho-\id,\id)\cr
D(V,\rho)&=(V^*,(\rho^*)^{-1})\cr
D(V,V',\alpha,\beta)&=(V^*,(V')^*,-\beta^*,\alpha^*(\beta^*\alpha^*+\id)^{-1})
\end{aligned}
\end{equation}
The claims of 
Proposition~\ref{pp:functors} are now obvious.

For arbitrary field $\kk$, this description of regular $\D$-modules fails, because
the Riemann-Hilbert correspondence is unavailable. However, the description still holds for $\D$-modules 
with unipotent monodromies. That is, we consider the decomposition
$$\DHol{K}^{reg}=\DHol{K}^{uni}\oplus\DHol{K}^{non-uni},$$
where $M\in\DHol{K}^{uni}$ (resp. $M\in\DHol{K}^{non-uni}$) if and only if all irreducible components of $M$ are constant (resp. non-constant). There is also a corresponding decomposition
$$\DHol{A}^{reg}=\DHol{A}^{uni}\oplus\DHol{A}^{non-uni};$$
explicitly, $M\in\DHol{A}^{uni}$ if and only if any irreducible component of $M$ is isomorphic
to $A$ or $\delta$. 

The categories $\DHol{A}^{non-uni}$ and $\DHol{K}^{non-uni}$ are equivalent, and on these categories, Proposition~\ref{pp:functors} is obvious.
On the other hand, $\DHol{K}^{uni}$ is equivalent to the category of pairs $(V,\rho)$ with unipotent $\rho$, while $\DHol{A}^{uni}$ is equivalent to the category of collections
$(V,V',\alpha,\beta)$ with unipotent $\alpha\beta+\id$. On these categories, 
we prove Proposition~\ref{pp:functors} by using \eqref{eq:PhiPsi}.
\qed

\begin{remark*} Proposition~\ref{pp:functors} involves a somewhat arbitrary normalization. Namely, $\Phi$ can be defined as either the left 
adjoint of $\jo_!$ or the right adjoint of $\jo_*$, so we need a canonical isomorphism
between the two adjoints. Equivalently, one has to construct a canonical commutativity isomorphism 
\begin{equation}
D\Psi(M)\iso\Psi(DM),\quad (M\in\DHol{A})\label{eq:dpsi}.
\end{equation}

Our proof of Proposition~\ref{pp:functors} amounts to the following normalization of \eqref{eq:dpsi}.
For $M\in\DHol{A}^{irreg}\oplus\DHol{A}^{non-uni}$, we have $\Psi(M)=\Phi(M)$,
and we use the isomorphism $D\Psi(M)\iso\Psi(DM)$.
On the other hand, for $M\in\DHol{A}^{uni}$, the isomorphism is prescribed by 
\eqref{eq:PhiPsi}. 
\end{remark*}

\subsection{Goresky-MacPherson extension}\label{sc:!*}

Define $\jo_{!*}:\DHol{K}\to\DHol{A}$ by 
$$\jo_{!*}(M)=\im(\jo_!(M)\to\jo_*(M)).$$ Here the functorial morphism
$\jo_!\to\jo_*$ is given by the adjunction.

\begin{proposition} $\jo_{!*}$ is fully faithful, but not exact. It 
commutes with the
Verdier duality. Also, $\jo^*\jo_{!*}=Id$.\qed
\end{proposition}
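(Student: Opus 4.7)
The plan is to deduce the four assertions from the properties of $\jo_!$, $\jo_*$, $\jo^*$ and $D$ collected in Proposition~\ref{pp:functors}. Write $c_M\colon\jo_!M\to\jo_*M$ for the canonical morphism coming from the adjunction $(\jo_!,\jo^*)$ together with $\jo^*\jo_*M = M$; by definition $\jo_{!*}M = \im(c_M)$, and the triangle identities force $\jo^*(c_M) = \id_M$. Since $\jo^* = \Psi$ is exact, it commutes with images, so $\jo^*\jo_{!*}M = \im(\jo^*c_M) = \im(\id_M) = M$, which settles $\jo^*\jo_{!*}=\id$. For compatibility with Verdier duality, the defining identity $\jo_! = D\jo_*D$ yields canonical isomorphisms $D\jo_!\iso\jo_*D$ and $D\jo_*\iso\jo_!D$, and the anti-equivalence $D$ preserves images. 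Applying $D$ to $c_M$ produces a morphism $\jo_!(DM)\to\jo_*(DM)$ which, by naturality and its characterization via $\jo^*$, must be $c_{DM}$; hence $D\jo_{!*}M = \im(Dc_M) = \im(c_{DM}) = \jo_{!*}(DM)$.

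For full faithfulness, the adjunction $(\jo_!,\jo^*)$ combined with $\jo^*\jo_* = \id$ supplies a natural identification $\Hom(\jo_!M,\jo_*N) = \Hom(M,N)$. Given $h\colon\jo_{!*}M\to\jo_{!*}N$, the composition
$$
\jo_!M \twoheadrightarrow \jo_{!*}M \xrightarrow{h} \jo_{!*}N \hookrightarrow \jo_*N
$$
corresponds under this identification to a unique $f\in\Hom(M,N)$, and a direct check gives $f = \jo^*h$. For this same $f$, the composition
$$
\jo_!M \twoheadrightarrow \jo_{!*}M \xrightarrow{\jo_{!*}f} \jo_{!*}N \hookrightarrow \jo_*N
$$
agrees with $\jo_!M\xrightarrow{c_M}\jo_*M\xrightarrow{\jo_*f}\jo_*N$ by naturality of $c$, which also corresponds to $f$ via the same adjunction. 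Since pre-composition with an epimorphism and post-composition with a monomorphism are both injective on hom-sets, this forces $h = \jo_{!*}f$. Faithfulness is immediate from $\jo^*\jo_{!*}=\id$: if $\jo_{!*}f = \jo_{!*}g$, apply $\jo^*$ to get $f = g$.

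The failure of exactness is the only non-formal point. A short diagram chase using exactness of $\jo_!$ and $\jo_*$ shows that $\jo_{!*}$ automatically preserves monomorphisms and epimorphisms, so any failure must occur in the middle of a short exact sequence. I would exhibit one in the regular unipotent subcategory: take $M\in\DHol{K}^{uni}$ a rank-two Jordan block local system sitting in a non-split short exact sequence $0\to(\kk,1)\to M\to(\kk,1)\to 0$, compute $\jo_{!*}$ on each term via the quiver description of Section~\ref{sc:proofs} and the formulas~\eqref{eq:PhiPsi}, and observe that the resulting three-term complex has nontrivial homology in the middle. The main obstacle is this explicit quiver computation: the canonical morphism $c_M$ must be correctly normalized in the quiver presentation (its description on non-semisimple unipotent modules requires care), and the dimension counts must be carried out so that exactness fails rather than accidentally holding.
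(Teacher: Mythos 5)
Your proof is correct, but it is organized differently from the paper's. The paper disposes of the whole proposition by the reduction scheme of Section~\ref{sc:proofs}: decompose into the irregular and regular non-unipotent parts (where $\jo_!=\jo_*=\jo_{!*}$ and all claims are immediate) plus the unipotent part, and in the latter read everything off the explicit quiver formulas \eqref{eq:PhiPsi} together with \eqref{eq:!*}. You instead prove three of the four assertions purely formally from Proposition~\ref{pp:functors} -- exactness of $\Psi=\jo^*$ and $\jo^*c_M=\id$ give $\jo^*\jo_{!*}=\id$; the identity $\jo_!=D\jo_*D$, the fact that the anti-equivalence $D$ preserves images, and the characterization of $c_M$ by $\Psi(c_M)=\id$ give $D\jo_{!*}\simeq\jo_{!*}D$; and the adjunction $\Hom(\jo_!M,\jo_*N)\cong\Hom(M,N)$ gives full faithfulness -- reserving the quiver machinery only for the counterexample to exactness. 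Your route is convention-free and makes clear that these three claims need nothing beyond the adjunction package, while the paper's route treats all four claims uniformly and keeps the computation explicit. One remark on the step you flag as the main obstacle: you do not actually need to normalize $c_M$ in the quiver presentation. Since $\Phi$ is exact and $\Phi(\jo_{!*}(N))=N/N^{hor}$ (Lemma~\ref{lm:Phi!*}, or directly \eqref{eq:!*}), applying $\Phi\circ\jo_{!*}$ to your non-split sequence $0\to(\kk,1)\to M\to(\kk,1)\to0$ gives outer terms $0$ and a nonzero middle term (the rank-one module $M/M^{hor}$), so exactness fails without ever writing down $c_M$ explicitly; alternatively, your dimension count on the $V'$-components ($0$, $1$, $0$) already settles it once \eqref{eq:!*} is granted.
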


It is easy to relate $\jo_{!*}$ and $\Phi$.

\begin{lemma}\label{lm:Phi!*} There is an isomorphism
$$\Phi(\jo_{!*}(M))=M/M^{hor},$$
functorial in $M\in\DHol{K}$. Here $M^{hor}$ is the maximal trivial submodule of 
$M$; in other words, $M^{hor}$ is generated by the horizontal sections of $M$. \qed
\end{lemma}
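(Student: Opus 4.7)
The plan is to reduce to the unipotent regular case and verify the isomorphism explicitly in the quadruple description from Section~\ref{sc:proofs}.

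First I would split according to the decomposition of $\DHol{K}$ into regular/irregular and unipotent/non-unipotent parts. For $M$ purely irregular, or regular with non-unipotent monodromy, the canonical map $\jo_!(M) \to \jo_*(M)$ is an isomorphism: on the irregular part this follows from $\jo_! = \jo_*$ noted in Section~\ref{sc:proofs}, and in the non-unipotent regular case it follows from the invertibility of $\rho - \id$ on $V$, so that the adjunction morphism has invertible components in the quadruple description. Hence $\jo_{!*}(M) = \jo_!(M)$, and simultaneously $M^{hor} = 0$ (purely irregular modules have no horizontal sections, and non-unipotent monodromy has no fixed vectors). Using $\Phi \jo_! = \id$ from the adjunction $\Phi \dashv \jo_!$ together with the full faithfulness of $\jo_!$, the claim reduces to $\Phi(\jo_!(M)) = M = M/M^{hor}$, which is immediate.

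For the remaining unipotent regular case, write $M = (V,\rho)$ with $\rho$ unipotent, so that $M^{hor}$ corresponds to $V^\rho = \ker(\rho - \id)$ and $M/M^{hor}$ to $(V/V^\rho, \bar\rho)$. Using the quadruple description of $\DHol{A}^{uni}$ and the formulas \eqref{eq:PhiPsi}, I would write $\jo_!(M)$ and $\jo_*(M)$ as quadruples, identify the canonical adjunction morphism $\jo_!(M) \to \jo_*(M)$ as the unique (up to isomorphism) map whose associated $\D_K$-endomorphism of $M = \Psi\jo_!(M) = \Psi\jo_*(M)$ is $\id_M$, and compute its image as a sub-quadruple of $\jo_*(M)$. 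Applying the formula $\Phi(V,V',\alpha,\beta) = (V', \alpha\beta + \id)$ to the resulting quadruple should yield a $\D_K$-module isomorphic to $(V/V^\rho, \bar\rho)$, giving the desired identification.

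The main obstacle is the bookkeeping in the quadruple category: pinning down the correct adjunction morphism in terms of the parameterization of $\Hom(\jo_! M, \jo_* M)$ by $\End_{\D_K}(M)$, computing its image (which amounts to linear algebra with the nilpotent operator $\rho - \id$), and translating back via the formula for $\Phi$. Once these explicit steps are done, functoriality in $M$ follows automatically from the naturality of each construction involved.
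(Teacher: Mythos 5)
Your proposal is correct and follows essentially the paper's own route: the paper proves the lemma by the same reduction via the decompositions of Section~\ref{sc:proofs} and, in the unipotent regular case, by applying \eqref{eq:PhiPsi} to the recorded formula \eqref{eq:!*}, $\jo_{!*}(V,\rho)=(V,(\rho-\id)(V),\rho-\id,\id)$, which yields $\Phi(\jo_{!*}(V,\rho))\simeq((\rho-\id)V,\rho)\simeq(V/V^{\rho},\bar\rho)$. Your computation of the image of the canonical map $\jo_!(M)\to\jo_*(M)$ in the quadruple category is exactly the derivation of \eqref{eq:!*} that the paper leaves implicit, so the two arguments coincide in substance.
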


\begin{corollary} The isomorphism class of $M\in\DHol{K}$ is uniquely 
determined by the isomorphism class of $\Phi(\jo_{!*}(M))$ together 
with $\rk(M)$. \qed
\label{co:formaltype}
\end{corollary}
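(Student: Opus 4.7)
By Lemma~\ref{lm:Phi!*}, knowing $\Phi(\jo_{!*}(M))$ is the same as knowing $M/M^{hor}$, so I need to argue that $M/M^{hor}$ together with $\rk(M)$ recovers $M$ up to isomorphism. The plan is to localize the problem to the unipotent regular component of $M$, where all the horizontal sections live, and then verify a combinatorial statement about Jordan types.

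First I would use the decomposition $\DHol{K}=\DHol{K}^{irreg}\oplus\DHol{K}^{reg}=\DHol{K}^{irreg}\oplus\DHol{K}^{non-uni}\oplus\DHol{K}^{uni}$ from Section~\ref{sc:proofs}. Since horizontal sections are trivial subobjects, $M^{hor}\subseteq M^{uni}$, hence
$$M/M^{hor}=M^{irreg}\oplus M^{non-uni}\oplus(M^{uni}/(M^{uni})^{hor}).$$
The first two summands are recovered canonically from $M/M^{hor}$, and $\rk(M^{uni})=\rk(M)-\rk(M^{irreg})-\rk(M^{non-uni})$. It therefore suffices to prove the statement for $M\in\DHol{K}^{uni}$.

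For the unipotent regular case I would invoke the description of $\DHol{K}^{uni}$ as the category of pairs $(V,\rho)$ with $\rho$ unipotent (this part of the Riemann-Hilbert dictionary works over any $\kk$ of characteristic zero, since Jordan normal form exists for unipotent operators over any field). Under this equivalence $M^{hor}$ corresponds to the fixed subspace $V^\rho$, and $M/M^{hor}$ to the pair $(V/V^\rho,\bar\rho)$. If $\rho$ has Jordan blocks of sizes $n_1,\ldots,n_k$, then $\bar\rho$ has Jordan blocks of sizes $n_1-1,\ldots,n_k-1$ (dropping the zeros).

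The final step is the combinatorial observation that the multiset $\{n_i\}$ of positive integers is recovered from the multiset $\{n_i-1:n_i>1\}$ together with the total $\sum_i n_i$: the Jordan blocks of size $\ge 2$ are reconstructed by adding $1$ to each entry of the former multiset, and the number of blocks of size $1$ is forced by the total. I do not expect any real obstacle here, since once one reduces to the unipotent summand, the statement becomes a one-line combinatorial check; the only mild subtlety is being careful that $M^{hor}$ really is supported in $\DHol{K}^{uni}$, which follows because a horizontal section is by definition a $\D_K$-map from the constant module $K$, and $\Hom_{\D_K}(K,\cdot)$ vanishes on irreducible regular modules that are non-constant and on irregular modules.
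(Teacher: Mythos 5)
Your proof is correct and follows essentially the same route as the paper: the paper's (very terse) argument is precisely to invoke the decomposition of Section~\ref{sc:proofs} into irregular, non-unipotent regular, and unipotent regular parts and, in the unipotent case, the explicit description \eqref{eq:!*} together with \eqref{eq:PhiPsi}, which identifies $\Phi(\jo_{!*}(V,\rho))$ with $\bigl((\rho-\id)V,\rho\bigr)\cong (V/V^\rho,\bar\rho)$ --- exactly your $M/M^{hor}$ via Lemma~\ref{lm:Phi!*}. Your added Jordan-block bookkeeping (recovering the multiset $\{n_i\}$ from $\{n_i-1:n_i>1\}$ and $\sum_i n_i$) is the step the paper leaves implicit, and it is right.
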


These statements can be proved by the argument of Section~\ref{sc:proofs}. The counterpart of
\eqref{eq:PhiPsi} is 
\begin{equation}
\label{eq:!*}
\jo_{!*}(V,\rho)=(V,(\rho-\id)(V),\rho-\id,\id).
\end{equation}

\section{$\D$-modules on curves}\label{sc:curves}
Fix a smooth curve $X$ over $\kk$ (not necessarily projective). Denote by
$\DMod{X}$ the category of quasicoherent left $\D_X$-modules and by
$\DHol{X}\subset\DMod{X}$ the full subcategory of holonomic $\D_X$-modules.
Recall that $M\in\DMod{X}$ is \emph{holonomic} if it is finitely 
generated and its generic rank is finite at all generic points of $X$.

\subsection{Formal nearby and vanishing cycles} \label{sc:curve}
Fix a closed point $x\in X$. Recall that $A_x$ and $K_x$ are the ring of Taylor series and
the field of Laurent series at $x$, respectively.

The map of schemes $j_x:\spec(A_x)\to X$ induces a pair of functors
\begin{align*}
j_x^*&:\DMod{X}\to\DMod{A_x}\\
j_{x*}&:\DMod{A_x}\to\DMod{X}.
\end{align*}
\begin{lemma}
\begin{enumerate}
\item $j_x^*$ and $j_{x*}$ are exact;
\item $j_x^*$ is the left adjoint of $j_{x*}$;
\item $j_x^*(\DHol{X})\subset\DHol{A_x}$; besides, $j_x^*$ commutes with 
the Verdier duality. (Of course, $j_{x*}(\DHol{A_x})\not\subset\DHol{X}$.)
\end{enumerate}
\qed
\end{lemma}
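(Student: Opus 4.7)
The plan is to reduce everything to the single input that the completion map $\mathcal{O}_{X,x}\hookrightarrow A_x$ is faithfully flat, so that the induced map $\D_{X,x}\hookrightarrow\D_{A_x}$ is flat on both sides. Assertion~(1) is then immediate: $j_x^*M=A_x\otimes_{\mathcal{O}_{X,x}}M_x$ is exact as a composition of localization at $x$ and flat base change, while $j_{x*}$ is restriction of scalars along the canonical map $\D_X\to j_{x*}\D_{A_x}$, which is exact on tautological grounds. Assertion~(2) is the standard tensor--forget adjunction, upgraded from $\mathcal{O}$-modules to $\D$-modules using $\D_{A_x}=A_x\otimes_{\mathcal{O}_{X,x}}\D_{X,x}$ (which relies on the fact that derivations on $\mathcal{O}_{X,x}$ lift uniquely to its completion).

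For assertion~(3), preservation of holonomicity splits in two. Finite generation of $j_x^*M$ over $\D_{A_x}$ follows from finite generation of $M$ over $\D_X$ together with $\D_{A_x}=A_x\otimes_{\mathcal{O}_{X,x}}\D_{X,x}$. For finite rank, observe that $K_x\otimes_{A_x}j_x^*M$ injects into the product of generic fibers of $M$ over the branches of $X$ meeting $x$, each of which is finite-dimensional by assumption. Commutation with Verdier duality reduces to flat base change for $\Ext$:
$$A_x\otimes_{\mathcal{O}_{X,x}}\Ext^1_{\D_X}(M,\D_X)\iso\Ext^1_{\D_{A_x}}(j_x^*M,\D_{A_x}),$$
which holds because $\D_{A_x}$ is flat over $\D_{X,x}$, combined with the canonical identification $j_x^*\omega_X=\omega_{A_x}$ that handles the line-bundle twist in the definition of $D$.

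The only real subtlety is the Verdier-duality assertion. One must align the degree shifts, the canonical-bundle twist, and the base-change isomorphism; on a curve all shifts are by one and $\omega_X$ is locally free of rank one, so once flatness of $\D_{A_x}$ over $\D_{X,x}$ is in hand (reducible to flatness of $A_x$ over $\mathcal{O}_{X,x}$ via the order filtration on $\D$), the identifications are direct. I expect this bookkeeping, rather than any new idea, to be the main obstacle.
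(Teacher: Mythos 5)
Your proof is correct, and since the paper states this lemma without proof (treating it as standard), your argument — faithful flatness of $\cO_{X,x}\to A_x$, the identification $\D_{A_x}=A_x\otimes_{\cO_{X,x}}\D_{X,x}$, the extension/restriction-of-scalars adjunction, and flat base change for $\Ext$ against $\D$ to handle Verdier duality — is precisely the standard verification it implicitly relies on. One small simplification: for finite rank you do not need an injection into generic fibers; transitivity of base change along $\cO_{X,x}\to F=\mathrm{Frac}(\cO_{X,x})\to K_x$ gives an isomorphism $K_x\otimes_{A_x}j_x^*M\cong K_x\otimes_F(F\otimes_{\cO_{X,x}}M_x)$, so the rank of $j_x^*M$ equals the generic rank of $M$ at the component through $x$.
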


\begin{corollary} Define $\Psi_x,\Phi_x:\DHol{X}\to\DHol{K_x}$ (the 
functors of formal nearby and vanishing cycles at $x$) by 
$\Psi_x=\Psi\circ j_x^*$,
$\Phi_x=\Phi\circ j_x^*$. 
\begin{enumerate}
\item $\Psi_x$ and $\Phi_x$ are exact functors that commute with the 
Verdier duality.
\item\label{it:cycles2} $\Psi_x$ and $\Phi_x$ are left adjoints of $j_{x*}\circ\jo_*$ and 
$j_{x*}\circ\jo_!$, respectively.
\end{enumerate}
\qed
\label{co:cycles}
\end{corollary}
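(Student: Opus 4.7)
The plan is to observe that Corollary~\ref{co:cycles} is purely formal: both $\Psi_x$ and $\Phi_x$ are defined as compositions of already-studied functors, and each of the asserted properties is preserved under composition. The strategy, then, is to verify the relevant property of each factor and assemble them.

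For part (1), I would first invoke the preceding lemma to conclude that $j_x^*:\DHol{X}\to\DHol{A_x}$ is exact and commutes with Verdier duality. Next I would invoke Proposition~\ref{pp:functors}(2), which states that $\Psi$ and $\Phi$ are exact functors on $\DHol{A}$ that commute with $D$. Since exactness and commutation with an involutive anti-equivalence are both preserved under composition, one immediately obtains that $\Psi_x=\Psi\circ j_x^*$ and $\Phi_x=\Phi\circ j_x^*$ are exact and commute with Verdier duality. The only mild care required is to pin down the canonical isomorphisms $D\Psi_x(M)\iso\Psi_x(DM)$ and $D\Phi_x(M)\iso\Phi_x(DM)$ by composing the normalizations on the two factors; since the remark after Proposition~\ref{pp:functors} fixes a specific normalization of $D\Psi\iso\Psi D$ (and hence of $D\Phi\iso\Phi D$, via the adjunction relations), this determines the composed isomorphism unambiguously.

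For part (2), I would use the general categorical fact that a composition of left adjoints $L_2\circ L_1$ is left adjoint to the composition $R_1\circ R_2$ of the corresponding right adjoints (in reverse order). From the lemma, $j_x^*$ is left adjoint to $j_{x*}$. From Proposition~\ref{pp:functors}\eqref{it:adj}, the pairs $(\Psi,\jo_*)$ and $(\Phi,\jo_!)$ are adjoint. Composing yields
\[
\Hom_{\DHol{K_x}}(\Psi_x M,N)=\Hom_{\DHol{A_x}}(j_x^* M,\jo_* N)=\Hom_{\DHol{X}}(M,j_{x*}\jo_* N),
\]
functorial in $M\in\DHol{X}$ and $N\in\DHol{K_x}$, and likewise
\[
\Hom_{\DHol{K_x}}(\Phi_x M,N)=\Hom_{\DHol{X}}(M,j_{x*}\jo_! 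N).
\]
These are the desired adjunctions.

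Since every ingredient has already been established, there is no real obstacle; the only place one has to be attentive is in part (1), to check that the $D$-commutation isomorphisms of the factors fit together compatibly so that the statement ``$\Psi_x$ and $\Phi_x$ commute with Verdier duality'' is a genuine natural isomorphism rather than a mere abstract equivalence. This is automatic from the normalizations already fixed in Section~\ref{sc:proofs}.
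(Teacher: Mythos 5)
Your argument is exactly the formal composition the paper intends (the corollary is stated with \qed precisely because it follows by composing the lemma on $j_x^*,j_{x*}$ with Proposition~\ref{pp:functors}), so it is correct and takes the same route. The only cosmetic point is that in your adjunction chain the last Hom should be taken in $\DMod{X}$ rather than $\DHol{X}$, since $j_{x*}\jo_*N$ and $j_{x*}\jo_!N$ are not holonomic --- exactly the caveat the paper records in the remark following the corollary.
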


\begin{remark*} The second claim of the corollary requires some 
explanation, because the functors $j_{x*}\circ\jo_*$ and 
$j_{x*}\circ\jo_!$ fail to preserve holonomicity. For instance, for 
$\Phi$ the claim is that there is a functorial isomorphism
$$\Hom_{\D_X}(M,j_{x*}\circ\jo_!(N))=\Hom_{\D_K}(\Phi_x(M),N),\quad 
M\in\DHol{X},N\in\DHol{K_x};$$
here all $\D$-modules except for $j_{x*}\circ\jo_!(N)$ are holonomic. 
(The situation is less
confusing for the nearby cycles functor $\Psi$, because one can work 
with quasi-coherent $\D$-modules throughout.)
\end{remark*}

Now let us look at an infinite point. In other words, let $\oX\supset 
X$ be the smooth compactification of $X$, and let $x\in\oX-X$. We have a 
natural morphism of
schemes $\jo_x:\spec(K_x)\to X$, which induces two functors
\begin{align*}
\jo_x^*&:\DMod{X}\to\DMod{K_x}\\
\jo_{x*}&:\DMod{K_x}\to\DMod{X}.
\end{align*}
We sometimes denote $\jo_x^*$ by $\Psi_x$; it is the left adjoint of 
$\jo_{x*}$.

\subsection{Topology on $\D_A$-modules}\label{sc:topology}
Once again, consider $x\in X$. Clearly, the functor 
$j_{x*}:\DHol{A_x}\to\DMod{X}$ is faithful, but not full. The reason is 
that the functor forgets the natural topology on $M\in\DHol{A_x}$. Let us 
make precise statements.  Recall the definition of the ($z$-adic) 
topology on $M\in\DHol{A_x}$:

\begin{definition*} A subspace $U\subset M$ is open if for any 
finitely-generated $A$-submodule $N\subset M$, there is $k$ such that 
$U\supset z^k N$. Here $z\in A_x$ is a local coordinate. Open subspaces form a base
of neighborhoods of $0\in M$.
\end{definition*}

We can now view $j_{x*}(M)$ as a topological $\D_X$-module. 

\begin{lemma} \label{lm:topology}
For any $M,N\in\DHol{A_x}$, the map
$$\Hom_{\D_{A_x}}(M,N)\to \Hom_{\D_X}(j_{x*}M,j_{x*}N)$$ identifies 
$\Hom_{\D_{A_x}}(M,N)$ with the subspace of continuous homomorphisms between 
$j_{x*}M$ and $j_{x*}N$. In other words, the functor $j_{x*}$ is a 
fully faithful embedding of $\DMod{A_x}$ into the category of topological 
$\D_X$-modules. 
\end{lemma}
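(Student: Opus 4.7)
The plan is to reduce the lemma to a comparison of $R$-linear versus $A_x$-linear module maps on an affine chart, then promote $R$-linearity to $A_x$-linearity by a density-and-continuity argument. First I would choose an affine open $U = \spec R \subset X$ containing $x$; the ring map $R \to A_x$ has dense image in the $z$-adic topology, because $R \twoheadrightarrow A_x / z^n A_x \cong \mathcal{O}_{X,x}/\mathfrak{m}_x^n \cong R/\mathfrak{m}_x^n$ is surjective for each $n$. Any $\D_X$-morphism $\phi: j_{x*}M \to j_{x*}N$ is determined by its value on $U$, which is precisely an $R$-linear, $\D_R$-equivariant map $\phi: M \to N$ (the restriction to opens not meeting $x$ is the automatic localization $K_x \otimes_{A_x} \phi$), so the lemma becomes the assertion that such a $\phi$ is $A_x$-linear if and only if it is $z$-adically continuous.

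The ``$A_x$-linear implies continuous'' direction is immediate: given an open $V \ni 0$ in $N$ and a finitely generated $A_x$-submodule $M' \subset M$, the image $\phi(M')$ is finitely generated, so $V \supset z^k \phi(M') = \phi(z^k M')$ for some $k$, whence $\phi^{-1}(V) \supset z^k M'$. For the converse, fix $m \in M$ and consider the two $\kk$-linear maps $\alpha, \beta : A_x \to N$ defined by $\alpha(a) = \phi(am)$ and $\beta(a) = a \phi(m)$. Both are continuous: the scalar multiplication $a \mapsto am$ is continuous from $A_x$ to $M$ because any neighborhood $V$ of $0$ in $M$ contains $z^k(A_x m)$ for some $k$ (the cyclic submodule $A_x m$ being finitely generated), hence $\{a : am \in V\} \supset z^k A_x$; composing with continuous $\phi$ handles $\alpha$, and $\beta$ is analogous. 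Since $\alpha$ and $\beta$ agree on the image of $R$ (by $R$-linearity of $\phi$) and $N$ is Hausdorff in the $z$-adic topology, density forces $\alpha = \beta$ on all of $A_x$, and $\phi$ is $A_x$-linear.

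The Hausdorffness of the $z$-adic topology, which I expect to be the one step requiring any real care, follows from Krull's intersection theorem: for nonzero $m \in N$, the cyclic submodule $A_x m$ is a finitely generated module over the Noetherian local ring $A_x = \kk_x[[z]]$, so $\bigcap_k z^k(A_x m) = 0$, and a standard diagonal argument (choose, for each f.g.\ $N'' \subset N$ containing $m$, an integer $k(N'')$ with $m \notin z^{k(N'')} N''$, then take the union of the $z^{k(N'')} N''$ over all such $N''$) produces an open neighborhood of $0$ in $N$ avoiding $m$. No structural results about holonomic $\D$-modules are needed for this step.
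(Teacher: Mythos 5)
Your overall route is the same as the paper's: identify $\Hom_{\D_X}(j_{x*}M,j_{x*}N)$ with $\D$-linear maps $M\to N$ over the ring of algebraic functions near $x$ (the paper uses the local ring $O_x\subset A_x$ where you use an affine chart $R$ -- no real difference), and then upgrade to $A_x$-linearity by density of that ring in $A_x$ together with continuity. Your ``$A_x$-linear $\Rightarrow$ continuous'' direction and the density-plus-continuity argument (via the two maps $\alpha,\beta:A_x\to N$) are fine, and you are right that the one point needing care is Hausdorffness of the $z$-adic topology on $N$, which the paper leaves implicit.

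That last step, however, is where your argument has a genuine gap. The set you build -- the union of the subspaces $z^{k(N'')}N''$ over finitely generated $N''\ni m$ -- is not a subspace, hence not an open neighborhood of $0$ in the sense of the definition (open neighborhoods are required to be subspaces containing some $z^kN'$ for \emph{every} finitely generated $N'$), and it does not obviously contain one. If you pass to the subspace it generates, avoiding $m$ is no longer guaranteed, because your choice of $k(N'')$ only prevents $m$ from lying in each individual piece, not in sums across different pieces. Concretely, take $N=\jo_*(K_x\oplus K_x)/A_x\cdot(1,1)$ (a holonomic $\D_{A_x}$-module) and $m=\overline{(z^{-1},z^{-1})}$: for the cyclic submodules $N''=A_x\cdot\overline{(z^{-1-k},0)}$ one has $m\notin z^jN''$ for all $j$, so your recipe allows $k(N'')=0$; but then the span of your union contains $\overline{(z^{-1},0)}$ and $\overline{(0,z^{-1})}$, hence $m$. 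So ``no structural results about holonomic $\D$-modules are needed'' is exactly the claim that fails: Krull applied to cyclic submodules is not enough. A correct argument can go as follows. If $m$ is not $z$-torsion, pass to the torsion-free quotient $Q=N/N_{tors}$, pick a $K_x$-linear functional $f$ on $K_x\otimes_{A_x}Q$ with $f(\bar m)=1$, and check that the preimage in $N$ of $\{q\in Q: f(q)\in zA_x\}$ is an open subspace avoiding $m$ (openness uses that the image of a finitely generated submodule under $f$ lands in some $z^{-s}A_x$). If $m$ is torsion, use that the torsion submodule of a holonomic $\D_{A_x}$-module is a finite direct sum of copies of $\delta$, hence $z$-divisible and an $A_x$-module direct summand, $N\simeq N_{tors}\oplus Q$; then $\{0\}\oplus L$, for $L$ a lattice in $Q$ (e.g.\ $Q\cap L_0$ for an $A_x$-lattice $L_0$ in $K_x\otimes Q$), is an open subspace missing $m$, since every finitely generated submodule of $N$ has finite-length torsion component killed by a power of $z$. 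With Hausdorffness established this way, the rest of your proof goes through and agrees with the paper's.
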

\begin{proof} Clearly, $\Hom_{\D_X}(j_{x*}M,j_{x*}N)$ identifies with the space of
homomorphisms $M\to N$ of $\D_{O_x}$-modules. Here $O_x\subset A_x$ is the local
ring of $x$, and $\D_{O_x}\subset\D_{A_x}$ is the corresponding ring of differential operators. The lemma follows from density of $O_x$ in $A_x$ in $z$-adic topology.
\end{proof}

Of course, similar construction can be carried out at infinity. Namely, 
for $x\in\oX-X$,
any module $M\in\DHol{K}$ carries a natural topology. This allows us to 
view $\jo_{x*}(M)$
as a topological $\D_X$-module. The functor $\jo_{x*}$ is a fully 
faithful embedding of
$\DHol{K}$ into the category of topological $\D_X$-modules.

\subsection{Euler characteristic}
Let $M\in\DHol{\oX}$ be a holonomic $\D$-module on a smooth projective curve $\oX$. For simplicity,
assume that $\oX$ is irreducible. Consider the Euler characteristic of $M$
$$\chi_{dR}(M)=\dim H^0_{dR}(\oX,M)-\dim H^1_{dR}(\oX,M)+\dim H^2_{dR}(\oX,M).$$
Here $H_{dR}$ stands
for the de Rham cohomology (or, equivalently, the derived direct image for the map
$\oX\to\spec(\kk)$).

The Euler-Poincar\'e formula due to Deligne
expresses $\chi_{dR}(M)$ in local terms as follows:

\begin{proposition}\label{pp:EulerPoincare} 
Let $g$ be the genus of $\oX$. Then
\begin{align*}
\chi_{dR}(M)&=\rk(M)(2-2g)-\sum_{x\in\oX(\overline\kk)}(\rk\Phi_x(M)+\irreg(\Psi_x(M)))\\
&=\rk(M)(2-2g)-\sum_{x\in\oX}[\kk_x:\kk](\rk\Phi_x(M)+\irreg(\Psi_x(M))).
\end{align*}
\qed
\end{proposition}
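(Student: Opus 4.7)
The plan is to reduce the global formula to local computations on formal disks, following Deligne's classical strategy. Both sides of the claimed identity are additive on short exact sequences of holonomic $\D_{\oX}$-modules: the de Rham Euler characteristic by the long exact sequence of cohomology, the generic rank $\rk$ trivially, and $\rk\Phi_x$, $\irreg\Psi_x$ by exactness of $\Phi_x$ and $\Psi_x$ (Corollary~\ref{co:cycles}) combined with the additivity of irregularity along the slope filtration. It therefore suffices to verify the formula on a set of generators of the Grothendieck group of $\DHol{\oX}$.

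By d\'evissage we reduce to two kinds of irreducible modules: skyscrapers at closed points, and Goresky-MacPherson extensions $M=j_{!*}L$ of irreducible local systems $L$ on a nonempty open $U\subset\oX$. For a skyscraper $\delta_x$, the short exact sequence $0\to\delta_x\to j_!\cO_U\to\cO_{\oX}\to 0$ with $U=\oX\setminus\{x\}$ (verified via \eqref{eq:PhiPsi}), combined with $\chi_{dR}(\cO_{\oX})=2-2g$ and $\chi_{dR}(j_!\cO_U)=\chi_c(U)=(2-2g)-[\kk_x:\kk]$, gives $\chi_{dR}(\delta_x)=-[\kk_x:\kk]$, matching the right-hand side since $\rk\Phi_x(\delta_x)=1$ while $\rk(\delta_x)=\irreg\Psi_x(\delta_x)=0$. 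For $M=j_{!*}L$, Lemma~\ref{lm:Phi!*} and \eqref{eq:!*} identify the kernel of $j_!L\twoheadrightarrow j_{!*}L$ with an explicit direct sum of skyscrapers at $S:=\oX\setminus U$ whose $\rk\Phi_x$-contributions exactly account for the difference between $\rk\Phi_x(j_!L)$ and $\rk\Phi_x(j_{!*}L)$; additivity therefore reduces the formula to the case of $j_!L$.

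What remains is the identity
\begin{equation*}
\chi_{dR}(\oX,j_!L)=\rk(L)\chi_c(U)-\sum_{x\in S}[\kk_x:\kk]\cdot\irreg(\Psi_x L).
\end{equation*}
When $L$ is regular singular at every $x\in S$, the irregularities vanish and the identity becomes $\chi_{dR}(\oX,j_!L)=\rk(L)\chi_c(U)$. Over $\kk=\C$ this follows from the Riemann-Hilbert correspondence combined with the classical topological Euler characteristic of a local system on a punctured Riemann surface; for general $\kk$ of characteristic zero the statement descends from $\overline\kk$ by a standard Galois-invariance (or specialization) argument.

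The principal obstacle is the irregular case. The key local input is that the relative index of the de Rham complex on an appropriate $A_x$-lattice of $j_!(\Psi_x L)$, compared with that of a regular extension of the same rank, equals $-[\kk_x:\kk]\cdot\irreg(\Psi_x L)$. After base change to $K_x^{unr}=\overline\kk((\zeta))$ and the Levelt-Turrittin decomposition, this reduces to the rank-one case: for the elementary module $\ell_\beta$ of slope $s$, the index of $d/d\zeta+\beta/\zeta^{s+1}$ between standard lattices evaluates to $-s$. Assembling these local contributions along the slope filtration of $\Psi_x L$ and combining with the regular case of the previous step yields the desired formula for $j_!L$, completing the proof.
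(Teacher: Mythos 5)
The paper does not actually prove this proposition: it is quoted as Deligne's Euler--Poincar\'e formula (stated with references, \emph{e.g.} Malgrange's book), so there is no in-paper argument to compare yours against. Judged on its own, your outline follows the classical Deligne--Malgrange strategy, and the reductions you perform are sound: additivity of both sides in short exact sequences (using exactness of $\Phi_x$, $\Psi_x$ from Corollary~\ref{co:cycles} and additivity of $\irreg$), d\'evissage to skyscrapers and to $j_{!*}L$, the sequence $0\to\delta_x\to j_!\cO_U\to\cO_{\oX}\to 0$ and the values $\rk\Phi_x(\delta_x)=1$, $\Psi_x(\delta_x)=0$ all check out against \eqref{eq:PhiPsi}, and the passage from $j_{!*}L$ to $j_!L$ via the kernel supported on $S$ is correct (by Kashiwara's theorem that kernel is a sum of $\delta$'s).

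The weight of the theorem, however, sits in your last paragraph, and there it is more asserted than proved. First, the statement that the relative index of the de Rham complex on a lattice equals minus the irregularity is essentially Malgrange's local index theorem; invoking it is close to invoking the Euler--Poincar\'e formula itself, so if you intend to prove it you must say how the index is computed. Your reduction to rank one needs ramified extensions $\kk((z^{1/r}))$, not merely $K_x^{unr}=\overline\kk((z))$ (unramified extension only enlarges the residue field and does not split a module of fractional slope into rank-one pieces), and you must then control how both the index and the irregularity behave under pushforward along $t\mapsto t^r$ before the rank-one computation for $d/d\zeta+\beta/\zeta^{s+1}$ can be assembled. Second, the globalization step is missing: to convert formal local indices at the points of $S$ into the algebraic Euler characteristic $\chi_{dR}(\oX,j_!L)$ one needs an actual mechanism, typically the choice of a coherent extension (lattice) of $L$ across $S$, Riemann--Roch for that extension, and a comparison identifying the defect of the lattice de Rham complex from $j_!L$ with the local index terms and with the $\delta$-contributions distinguishing $j_!$ from the lattice extension. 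As written, ``assembling these local contributions along the slope filtration'' papers over precisely this comparison, which is where the real work of the proof lies.
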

Here for $N\in\DHol{K_x}$, $\irreg(N)$ is the irregularity of $N$. Note that $$\irreg(\Psi_x(M))=\irreg(\Phi_x(M)).$$

\subsection{Formal type and rigidity}\label{sc:formal type}
Suppose that $\oX$ is projective, smooth, and irreducible. Let $L$ be a local system (that is, a vector bundle with connection) on a non-empty open subset $U\subset\oX$. 
\begin{definition}
The \emph{formal type} of $L$ is the collection of isomorphism classes
$\{[\Psi_x(L)]\}$ of $\Psi_x(L)$ for all closed points $x\in\oX$.
\end{definition}
If $x\in U$, then $\Psi_x(L)$ is a constant $\D_K$-module, so its
isomorphism class is determined by its rank. In other words, the formal type of $L$
can be reconstructed from the collection of isomorphism classes $\{[\Psi_x(L)]\}$
for all $x\in\oX-U$ and $\rk(L)$.

Let us study deformations of $L$. Fix an Artinian local ring $R$ whose residue field is $\kk$. 
Let $L_R$ be an $R$-deformation of $L$. That is, $L_R$ is a local system on $U$ equipped
with a flat action of $R$ and an identification $L=\kk\otimes_RL_R$. 

\begin{definition}[cf. formula~(4.30) in \cite{BE}]
The deformation $L_R$ is \emph{isotypical} if for any $x\in\oX$, there is an isomorphism
$\Psi_x(L_R)\simeq R\otimes_\kk L$ of $R\otimes_k\D_{K_x}$-modules. 
Of course, this condition is automatic for $x\in U$. 
\end{definition}

Consider now the first-order deformations of $L$, that is, $R=\kk[\epsilon]/(\epsilon^2)$ is
the ring of dual numbers. Explicitly, first-order deformations are extensions of $L$ by itself, and therefore the space of first-order deformations of $L$ is
$\Ext^1_{D_U}(L,L)=H^1_{dR}(U,\END(L))$. Here $\END(L)$ stands for the local system of endomorphisms of $L$.

\begin{lemma}[\cite{BE}, formula~(4.33)] Let $j_U:U\hookrightarrow\oX$ be the open embedding, and 
consider $\D_\oX$-modules 
$j_{U,!*}(\END(L))\subset j_{U,*}(\END(L))$. The space of isotypical first-order deformations is
identified with
$$H^1_{dR}(\oX,j_{U,!*}(\END(L)))\subset H^1_{dR}(\oX,j_{U,*}(\END(L)))=H^1_{dR}(U,\END(L)).$$
\label{lm:deformation}
\end{lemma}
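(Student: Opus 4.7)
The plan is to exploit the canonical short exact sequence
$$0 \to j_{U,!*}(\END L) \to j_{U,*}(\END L) \to T \to 0,$$
where $T$ is supported on the finite boundary $\overline X - U$, and to read the identification off its long exact de Rham sequence. First I would identify first-order deformations of $L$ with $\Ext^1_{\D_U}(L,L) = H^1_{dR}(U,\END L)$, and, since $j_U$ is an affine open immersion, with $H^1_{dR}(\overline X, j_{U,*}(\END L))$. Unwinding the isotypical condition, a first-order deformation $L_R$ is isotypical iff at every boundary point $x$ the induced extension class of $\Psi_x(L_R)$ in $\Ext^1_{\D_{K_x}}(\Psi_x L, \Psi_x L)$ vanishes, so the isotypical deformations form the kernel of the localization map
$$\Psi = (\Psi_x)_x\colon H^1_{dR}(U,\END L) \to \bigoplus_{x \in \overline X - U} \Ext^1_{\D_{K_x}}(\Psi_x L, \Psi_x L).$$

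By the formula \eqref{eq:!*}, the stalk $T_x$ is a finite direct sum of $\delta$-modules (with multiplicity equal to the dimension of the horizontal coinvariants of $\Psi_x \END L$). A direct computation using the quiver data of Section~\ref{sc:proofs} shows $H^0_{dR}(\delta) = 0$, hence $H^0_{dR}(\overline X, T) = 0$, and the long exact de Rham sequence yields the embedding
$$0 \to H^1_{dR}(\overline X, j_{U,!*}(\END L)) \hookrightarrow H^1_{dR}(\overline X, j_{U,*}(\END L)) \xrightarrow{\partial} H^1_{dR}(\overline X, T) = \bigoplus_{x} H^1_{dR}(T_x)$$
asserted in the lemma.

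It remains to match $\partial$ with $\Psi$. Because $T$ is a skyscraper, $\partial$ decomposes as a direct sum over $x$ of local connecting maps coming from the pulled-back sequences $0 \to \jo_{!*}(\Psi_x \END L) \to \jo_*(\Psi_x \END L) \to T_x \to 0$ on each formal disk. Using \eqref{eq:PhiPsi} and \eqref{eq:!*}, one checks functorially in $N \in \DHol{K_x}$ that $\Ext^1_{\D_{K_x}}(N,N) \simeq H^1_{dR}(T_x)$ --- both sides vanish on the purely irregular summands of $\END N$ and compute the horizontal coinvariants of the adjoint monodromy on the regular part --- and that $\partial$ at $x$ agrees with $\Psi_x$ under this identification. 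I expect this last compatibility to be the main technical step; I would verify it either by direct computation with the $(V,V',\alpha,\beta)$-data of Section~\ref{sc:proofs}, or by invoking the adjunction $(\Psi_x, j_{x*}\jo_*)$ of Corollary~\ref{co:cycles} to realise both $\partial$ and $\Psi_x$ as obstructions to a common lifting problem. Once this matching is in place, $\ker(\partial) = \ker(\Psi)$, which identifies the isotypical deformations with $H^1_{dR}(\overline X, j_{U,!*}(\END L))$ as claimed.
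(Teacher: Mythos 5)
Your argument is correct and is essentially the paper's proof: both identify first-order deformations with $H^1_{dR}(U,\END(L))=H^1_{dR}(\oX,j_{U,*}(\END(L)))$, observe that isotypicality is exactly the vanishing of the induced classes in $H^1_{dR}(K_x,\Psi_x(\END(L)))$ for $x\in\oX-U$, and conclude from exactness of
$$0\to H^1_{dR}(\oX,j_{U,!*}(\END(L)))\to H^1_{dR}(\oX,j_{U,*}(\END(L)))\to\bigoplus_{x\in\oX-U}H^1_{dR}(K_x,\Psi_x(\END(L)))\to 0.$$
The only real difference is that the paper quotes this sequence from \cite[Remark~4.1]{BE}, while you rebuild it from the skyscraper quotient $T=j_{U,*}(\END(L))/j_{U,!*}(\END(L))$; that is a legitimate, self-contained substitute, and your use of $H^0_{dR}(\delta)=0$ for the injectivity is right. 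The step you leave open---matching the map to $H^1_{dR}(\oX,T)$ with the localization $(\Psi_x)_x$---does go through, and more cheaply than either strategy you propose: writing $N=\Psi_x(\END(L))$, the kernel of $\jo_!N\twoheadrightarrow\jo_{!*}N$ is a finite direct sum of copies of $\delta$, and since $\Ext^i_{\D_{A_x}}(A,\jo_!N)=0$ for $i=0,1$ (Proposition~\ref{pp:functors}\eqref{it:!A}) while $\Ext^2_{\D_{A_x}}(A,\delta)=0$, one gets $\Ext^1_{\D_{A_x}}(A,\jo_{!*}N)=0$, i.e.\ $\partial_z$ is onto $\jo_{!*}N$; hence the natural map $H^1_{dR}(K_x,N)=H^1_{dR}(A_x,\jo_*N)\to H^1_{dR}(T_x)$ induced by $\jo_*N\twoheadrightarrow T_x$ is an isomorphism, and your comparison of kernels reduces to the routine compatibility of the global map with restriction to the formal disk at $x$, which is the functoriality check you outline. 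Two small points of precision: the arrow you call $\partial$ is not a connecting homomorphism but the map induced by $j_{U,*}(\END(L))\to T$ (the connecting maps go $H^0(T)\to H^1(j_{U,!*})$ and $H^1(T)\to H^2(j_{U,!*})$), and the multiplicity of $\delta$ in $T_x$ is the dimension of the monodromy coinvariants of the regular part of $\Psi_x(\END(L))$ in the notation of \eqref{eq:PhiPsi} and \eqref{eq:!*}, the irregular and non-unipotent parts contributing nothing.
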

\begin{proof} \cite[Remark~4.1]{BE} yields an exact sequence
\begin{multline*}
0\to H^1_{dR}(\oX,j_{U,!*}(\END(L)))\to H^1_{dR}(\oX,j_{U,*}(\END(L))\to\cr\bigoplus_{x\in\oX-U}H^1_{dR}(K_x,\Psi_x(\END(L)))\to 0.
\end{multline*}
By definition, $\alpha\in H^1_{dR}(\oX,j_*(\END(L))$ is isotypical if and only if its image in $H^1_{dR}(K_x,\Psi_x(\END(L)))$ (which controls deformations of $\Psi_x(L)$) vanishes for all $x$. This implies the statement.
\end{proof}

\begin{definition} $L$ is \emph{rigid} if any first-order isotypical deformation of $L$ is 
trivial. The \emph{rigidity index} of $L$ is given by 
$$\rig(L)=\chi_{dR}(j_{U,!*}(\END(L)).$$ 
\end{definition}

The Euler-Poincar\'e formula shows that $\rig(L)$ depends only on the formal type $\{[\Psi_x(L)]\}$.

\begin{remarks*}
Clearly, any isotypical deformation of a rigid local system $L$ over any local Artinian 
base $R$ is trivial. 

It is well known that $\rig(L)$ is always even, because $\END(L)$ is self-dual.
\end{remarks*}

\begin{corollary} 
\begin{enumerate}
\item \label{it:def1}
$L$ is rigid if and only if $H^1_{dR}(\oX,j_{!*}(\END(L)))=0.$ 

\item \label{it:def2}
Assume $L$ is irreducible. Then $\rig(L)\le 2$, and $L$ is rigid if and only if $\rig(L)=2$.
\end{enumerate}
\end{corollary}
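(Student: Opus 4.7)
Part~\eqref{it:def1} should be immediate from Lemma~\ref{lm:deformation} applied to $R=\kk[\epsilon]/(\epsilon^2)$: that lemma identifies the space of first-order isotypical deformations of $L$ with $H^1_{dR}(\oX,j_{U,!*}(\END(L)))$, so rigidity is by definition the vanishing of this group.

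For part~\eqref{it:def2}, the plan is to write $\rig(L)=\chi_{dR}(E)$ with $E:=j_{U,!*}(\END(L))$ and to expand this Euler characteristic by exploiting self-duality. The trace pairing makes $\END(L)$ self-dual, and since $j_{!*}$ commutes with Verdier duality (Section~\ref{sc:!*}), $E$ is self-dual as well. Poincar\'e duality on the smooth projective curve $\oX$ then gives $H^2_{dR}(\oX,E)\simeq H^0_{dR}(\oX,E)^{*}$, so
$$\rig(L)=2\dim H^0_{dR}(\oX,E)-\dim H^1_{dR}(\oX,E).$$
Both statements of~\eqref{it:def2} will then reduce to the single claim $\dim H^0_{dR}(\oX,E)=1$: the bound $\rig(L)\le 2$ is immediate, and by part~\eqref{it:def1} the equality $\rig(L)=2$ becomes equivalent to $H^1_{dR}(\oX,E)=0$, i.e.\ to rigidity.

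The plan for computing $H^0_{dR}(\oX,E)$ has two steps. For the upper bound, since $E$ is a middle extension it has no subobject supported on $\oX-U$, so the restriction map $H^0_{dR}(\oX,E)\hookrightarrow H^0_{dR}(U,\END(L))$ is injective; the target is the space of horizontal endomorphisms of the local system $L$, which by absolute irreducibility and Schur's lemma equals $\kk$. For the lower bound, the identity endomorphism gives a monomorphism $\cO_U\hookrightarrow\END(L)$; since $\jo_!$ and $\jo_*$ are exact on holonomic modules (Proposition~\ref{pp:functors}), the middle extension $j_{!*}$ preserves monomorphisms, yielding $\cO_{\oX}=j_{U,!*}(\cO_U)\hookrightarrow E$ and hence a nonzero horizontal global section.

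The main obstacle I anticipate is this last step: verifying that the identity really extends to a section of the middle extension $j_{U,!*}(\END(L))$, rather than merely of $j_{U,*}(\END(L))$. This reduces to the functoriality of $j_{!*}$ on monomorphisms, which I plan to deduce from the exactness of $\jo_!$ and $\jo_*$ by factoring any induced map $j_{!*}(F)\to j_{!*}(G)$ through the injection $j_{*}(F)\hookrightarrow j_{*}(G)$. A minor secondary subtlety is that "irreducible" here must be read as \emph{absolutely} irreducible; if $\mathrm{End}(L)=D$ is a nontrivial division algebra over $\kk$, the Schur argument only yields the weaker bound $\rig(L)\le 2\dim_{\kk}D$.
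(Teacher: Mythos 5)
Your proposal is correct and follows essentially the same route as the paper: part~\eqref{it:def1} is read off from Lemma~\ref{lm:deformation}, and part~\eqref{it:def2} reduces to the one-dimensionality of $H^0_{dR}$ and $H^2_{dR}$ of $j_{U,!*}(\END(L))$, which the paper simply asserts and which you justify exactly as the paper does later (Schur's lemma for $H^0$ and Verdier duality for $H^2$, cf.\ the proof of Corollary~\ref{co:rigidityfourier}). Your caveat that ``irreducible'' should be read as absolutely irreducible is consistent with the paper's remark following the corollary distinguishing physical from geometric rigidity over non--algebraically closed $\kk$.
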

\begin{proof} \eqref{it:def1} follows from Lemma~\ref{lm:deformation}; \eqref{it:def2} follows from \eqref{it:def1} since
$$H^0_{dR}(\oX,j_{!*}(\END(L)))=H^2_{dR}(\oX,j_{!*}(\END(L)))=\C.$$
\end{proof}

\begin{remark*} Assume $\kk$ is algebraically closed. Usually, rigidity is defined as follows: a local system $L$ on $U$ is \emph{physically rigid} if for any other local system $L'$ on $U$ such that $\Psi_x(L)\simeq\Psi_x(L')$
for all $x\in\oX$ satisfies $L\simeq L'$ (\cite{Ka}). However, irreducible $L$ is physically rigid if and only if $\rig(L)=2$ (``physical rigidity and cohomological rigidity are equivalent"). If $L$ has regular singularities, this is \cite[Theorem~1.1.2]{Ka};
for irregular singularities, see \cite[Theorem~4.7,Theorem~4.10]{BE}. 

If $\kk$ is not algebraically closed, one has to distinguish between `physical rigidity' and `geometric physical rigidity'. More precisely, geometrically irreducible $L$ satisfies
$\rig(L)=2$ if and only if $L\otimes_\kk\kk'$ is physically rigid for any finite extension $\kk\subset\kk'$ (\cite[Theorem~4.10]{BE}). 
\end{remark*}

\section{Fourier transform}\label{sc:fourier}

\subsection{Global Fourier transform} In this section, we work with the 
curve $X=\A1$, and
$z$ is the coordinate on $\A1$. Recall that the Weyl algebra
$$W=\kk\left\langle z,\frac{d}{dz}\right\rangle$$
is the ring of polynomial differential operators on $\A1$. The category $\DMod{\A1}$ is identified 
with the category of $W$-modules. 
$$\Four:\DMod{\A1}\to\DMod{\A1}$$
is the Fourier functor.
The Fourier transform preserves holonomicity: $$\Four(\DHol{\A1})\subset\DHol{\A1}.$$

Besides the description of $\Four$ using an automorphism $\four:W\to W$ (as in Section~\ref{sc:explicit}), we can construct $\Four$ as an integral transform
$$\Four(M)=p_{2,*}(p_1^!(M)\otimes\cE),$$
where $$p_i:\A2\to\A1:(z_1,z_2)\mapsto z_i\qquad i=1,2$$ 
are the projections, $p_{2,*}$ stands for the $\D$-modular direct image, and $\cE$ is the $\D$-module on $\A2$ with single generator that we denote $\exp(z_1z_2)$
and defining relations
$$\left(\frac{\partial}{\partial z_1}-z_2\right)\exp(z_1z_2)=\left(\frac{\partial}{\partial z_2}-z_1\right)\exp(z_1z_2)=0.$$

\begin{remark*} The algebra of (global) differential operators on $\A2$ equals
the tensor product $W\otimes_\kk W$. The global sections of $\cE$ form a module over this algebra. The module is identified with $W$, on which $W\otimes_\kk W$ acts by 
$$(D_1\otimes D_2)\cdot D=D_1\cdot D\cdot \four(D_2)^*.$$ 
Here $D_2^*$ is the formal adjoint of $D_2$ given by
$$\left(\sum a_i(z)\frac{d^i}{dz^i}\right)^*=\sum\left(-\frac{d}{dz}\right)^i a_i(z).$$
In other words, $D\mapsto D^*$ is the anti-involution of $W$ relating the left and right $\D$-modules.
\end{remark*}

\subsection{Rank of the Fourier transform}
Fix $M\in\DHol{\A1}$. Consider $\Psi_\infty(M)\in\DHol{K_\infty}$. Let us decompose
$$\Psi_\infty(M)=\Psi_\infty(M)^{>1}\oplus\Psi_\infty(M)^{\le1},$$
where all slopes of the first (resp. second) summand are greater than one (resp. do not exceed one).

\begin{proposition}[{\cite[Proposition V.1.5]{Mal}}]\label{pp:rankfourier}
\begin{multline*}
\rk(\Four(M))=\irreg(\Psi_\infty(M)^{>1})-\rk(\Psi_\infty(M)^{>1})+\cr\sum_{x\in\A1(\overline\kk)}(\rk(\Phi_x(M))+\irreg(\Psi_x(M)));
\end{multline*}
equivalently,
\begin{multline*}
\rk(\Four(M)=\irreg(\Psi_\infty(M)^{>1})-\rk(\Psi_\infty(M)^{>1})+\cr\sum_{x\in\A1}[\kk_x:\kk](\rk(\Phi_x(M))+\irreg(\Psi_x(M))).
\end{multline*}
\end{proposition}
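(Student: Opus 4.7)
The plan is to reduce the rank computation to the formal neighborhood of infinity and then apply rank formulas for the local Fourier transforms. The starting observation is that for any $N\in\DHol{\A1}$ one has $\rk(N)=\dim_{K_\infty}\Psi_\infty(N)$, since $\Psi_\infty$ is extension of scalars from the function field $\kk(z)$ to $K_\infty$ and therefore preserves dimension. So the task is to compute $\dim_{K_\infty}\Psi_\infty(\Four(M))$.

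For this I would apply formula~\eqref{eq:infty} from Corollary~\ref{co:localfourier}, which decomposes
$$
\rk(\Four(M))=\rk(\Four(\infty,\infty)(\Psi_\infty(M)^{>1}))+\sum_{x\in\A1}\rk(\Four(x,\infty)\Phi_x(M)).
$$
To conclude, one needs two rank formulas for the local Fourier transforms: $\rk(\Four(x,\infty)(N))=\rk(N)+\irreg(N)$ for $N\in\DHol{K_x}$ with $x\in\A1$, and $\rk(\Four(\infty,\infty)(N))=\irreg(N)-\rk(N)$ for $N\in\DHol{K_\infty}^{>1}$. Substituting these and using the elementary identity $\irreg(\Phi_x(M))=\irreg(\Psi_x(M))$ (the two functors agree on the purely irregular part, where all irregularity is concentrated, and differ only by regular constant summands, which have vanishing irregularity) yields the proposition in its form summed over $\A1(\overline\kk)$. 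The version over closed points follows by bundling each Galois orbit of size $[\kk_x:\kk]$.

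The main obstacle is establishing the two local rank formulas. I would derive them from the explicit construction of $\Four(x,\infty)$ in Section~\ref{sc:explicit}: the underlying $\kk$-vector space of $\Four(x,\infty)(N)$ is $N_!=j_{x*}\jo_!(N)$, and by item~(5) of that section the $z$-adic and $\zeta$-adic topologies on $N_!$ coincide. The $K_\infty$-dimension $\dim_{K_\infty}(N_!)$ is then encoded in the asymptotic behavior of $\dim_\kk(N_!/z^n N_!)$, whose leading linear coefficient works out to $\rk(N)+\irreg(N)$; the $\rk(N)$ term reflects the underlying $A_x$-module structure and the $\irreg(N)$ term arises from the slope contributions in the Newton polygon of $N$, which control the discrepancy between the $z$-adic and $\partial_z$-adic filtrations. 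The $(\infty,\infty)$ case is analogous, the sign reversal $\irreg(N)-\rk(N)$ reflecting the fact that for slopes $>1$ at infinity, $\partial_z$ acts expansively rather than contractively in the $\zeta$-adic sense. Alternatively, these rank formulas are essentially \cite[Proposition~3.7]{BE} and \cite[Proposition~3.12]{BE}, and by the discussion at the end of Section~\ref{sc:explicit} our construction agrees with that of \cite{BE}, so one may simply invoke them.
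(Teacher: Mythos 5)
Your route is genuinely different from the paper's. You compute $\rk(\Four(M))=\dim_{K_\infty}\Psi_\infty(\Four(M))$ and then feed the stationary-phase decomposition \eqref{eq:infty} into rank formulas for the local Fourier transforms, namely $\rk(\Four(x,\infty)N)=\rk(N)+\irreg(N)$ and $\rk(\Four(\infty,\infty)N)=\irreg(N)-\rk(N)$. The paper instead argues globally: using the integral-kernel description of $\Four$, the fiber of $\Four(M)$ at a generic geometric point is $H^1_{dR}(\A1\otimes\overline\kk,M\otimes\ell)$ for an exponential rank-one local system $\ell$ of slope one at infinity, the $H^0$ and $H^2$ vanish for generic $\ell$, and the Euler--Poincar\'e formula (Proposition~\ref{pp:EulerPoincare}) produces all the local terms at once --- both the $\rk\Phi_x+\irreg\Psi_x$ contributions and the $\irreg-\rk$ term at infinity (the latter because twisting by $\ell$ converts slopes $\le1$ into slope exactly $1$ while leaving slopes $>1$ alone). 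No circularity arises in your version, since Theorem~\ref{th:localfourier} and Corollary~\ref{co:localfourier} are proved independently of this proposition; your reduction of $\rk$ to $\dim_{K_\infty}\Psi_\infty$ and the bookkeeping $\irreg(\Phi_x M)=\irreg(\Psi_x M)$ are also fine.

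The gap is that the two local rank formulas are exactly where the content of the proposition sits, and you neither prove them nor can you find them in this paper. Your sketch (reading $\dim_{K_\infty}$ off the discrepancy between the $z$-adic and $\partial_z$-adic filtrations via the Newton polygon) is the right heuristic, but as written it is not an argument: one must actually produce an $A_\infty$-lattice in $N_!$ and compare growth of quotients, or reduce to pure-slope pieces over $\overline\kk((z^{1/r}))$ and compute $\zeta=-\partial_z^{-1}\sim z^{s+1}$ there --- a computation of essentially the same weight as the proposition itself. Falling back on \cite{BE} is legitimate logically (their local results do yield these rank formulas, at least at $\kk$-rational points without horizontal sections), but note the paper deliberately keeps its treatment independent of \cite{BE}, and you would still need to handle non-$\kk$-rational $x$, where the correct statement is $\rk_{K_\infty}(\Four(x,\infty)N)=[\kk_x:\kk]\bigl(\rk_{K_x}(N)+\irreg(N)\bigr)$; your remark about ``bundling Galois orbits'' presupposes this but does not justify it (e.g.\ by base change to $\overline\kk$ and compatibility of $\Four(x,\infty)$ with extension of scalars). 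So the proposal is a viable alternative proof, but only after the local rank formulas are either proved or properly imported.
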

\begin{proof}
Using the description of $\Four$ as an integral transform, we see that the fiber of
$\Four(M)$ at $x\in\A1(\overline\kk)$ equals $H^1(\A1\otimes\overline\kk,M\otimes\ell)$, where $\ell$ is a rank one local system on $\A1\otimes\overline\kk$ that
has a second order pole at infinity with the leading term given by $x$. For generic $x$, $H^0(\A1\otimes\overline\kk,M\otimes\ell)=H^2(\A1\otimes\overline\kk,M\otimes\ell)=0$, so we have
$\rk(\Four(M))=-\chi_{dR}(\A1\otimes\overline\kk,M\otimes\ell)$. The proposition now follows from the Euler-Poincar\'e formula (Proposition~\ref{pp:EulerPoincare}).
\end{proof}

\subsection{Proof of Theorem~\ref{th:localfourier}}\label{sc:localfourierproof}
As pointed out in Remark~\ref{rm:BE}, in many cases Theorem~\ref{th:localfourier}
follows from the results of \cite{BE}. Our exposition is independent of \cite{BE}.
As we saw in Section~\ref{sc:explicit}, Theorem~\ref{th:localfourier} reduces to
relatively simple statements about differential operators over formal power series. 
Let us make the relevant properties of differential operators explicit.

Recall the definition of a Tate vector spaces over $\kk$, which we copied from \cite{Dr}. 

\begin{definition} Let $V$ be a topological vector space over $\kk$, where $\kk$ is equipped
with the discrete topology. $V$ is \emph{linearly compact} if it is complete, Hausdorff, and has a base of neighborhoods of zero consisting of subspaces of finite codimension. Equivalently, a linearly compact space is the topological dual of a discrete space.

$V$ is a \emph{Tate space} if it has a linearly compact open subspace.
\end{definition}

Consider now $A$-modules for $A\simeq\kk[[z]]$.

\begin{definition} \label{df:tatemodule}
An $A$-module $M$ is \emph{of Tate type} if there is a finitely generated submodule $M'\subset M$ such that $M/M'$ is a torsion module that is `cofinitely generated' in
the sense that $$\dim_k\Ann_z(M/M')<\infty,\qquad\text{where }\Ann_z(M/M')=\{m\in M/M'\st zm=0\}.$$
\end{definition}

\begin{lemma} 
\begin{enumerate}
\item\label{it:module1} Any finitely generated $A$-module $M$ is linearly compact in the $z$-adic topology.
\item\label{it:module2} Any $A$-module $M$ of Tate type is a Tate vector space in the $z$-adic topology.
\end{enumerate}
\end{lemma}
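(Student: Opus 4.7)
Part~(1) reduces to the structure theorem for finitely generated modules over the discrete valuation ring $A=\kk[[z]]$: such an $M$ is isomorphic to $A^r\oplus T$, with $T$ a torsion module of finite length, hence finite-dimensional over $\kk$. The module $A$ is linearly compact in its $z$-adic topology, since $\{z^nA\}$ is a base of neighborhoods of zero of finite codimension $n$ and $A$ is complete and Hausdorff by the very definition of formal power series. The summand $T$ is trivially linearly compact as a finite-dimensional discrete vector space, and a finite direct sum of linearly compact spaces is linearly compact, which gives~(1).

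For part~(2), the plan is to take the submodule $M'\subset M$ provided by Definition~\ref{df:tatemodule} and show that $M'$ is both open and linearly compact in $M$; this exhibits $M$ as a Tate space. For openness of $M'$: given a finitely generated submodule $N\subset M$, its image in the torsion module $M/M'$ is a finitely generated torsion $A$-module, hence of finite length by the DVR structure theorem, so killed by some $z^k$; therefore $z^kN\subset M'$, which is exactly the defining condition for $M'$ to be open in the $z$-adic topology of $M$. To obtain linear compactness I would next show that the subspace topology on $M'$ inherited from $M$ coincides with its intrinsic $z$-adic topology: one direction is immediate by taking $N=M'$ in the openness criterion, and for the reverse one checks that each $z^kM'$ is open in $M$ by the same argument, using that $z^jN\subset M'$ implies $z^{j+k}N\subset z^kM'$. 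Part~(1) applied to $M'$ then gives linear compactness of $M'$ in the induced topology; Hausdorffness of $M$ follows from the Krull intersection $\bigcap_kz^kM'=0$ together with the openness of $M'$.

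The only mildly subtle step is the coincidence of topologies on $M'$, which must be checked in both directions by unwrapping the definition of an open set in the $z$-adic topology on $M$; everything else is standard DVR algebra packaged with the definitions. It is worth noting that the hypothesis $\dim_\kk\Ann_z(M/M')<\infty$ does not actually enter the argument sketched above; presumably it is needed for subsequent statements, such as Tate-space dualities or the microlocal constructions in Section~\ref{sc:Phi}.
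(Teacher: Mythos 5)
Your proposal is correct and follows essentially the same route as the paper: the heart of the matter is part (2), where, exactly as in the paper's (terse) proof, you show the submodule $M'$ from the definition of Tate type is open and linearly compact, and your verification that the subspace and intrinsic $z$-adic topologies on $M'$ agree is the right way to fill in the details; your closing observation that $\dim_\kk\Ann_z(M/M')<\infty$ is not needed matches the paper's own remark after the lemma. The only cosmetic difference is in part (1), where the paper invokes the Nakayama lemma while you use the structure theorem for finitely generated modules over the discrete valuation ring $A$ — both are standard and equally valid.
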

\begin{proof} 
\eqref{it:module1} follows from the Nakayama Lemma.

\eqref{it:module2}. The submodule $M'$ of Definition~\ref{df:tatemodule} is linearly compact and open.
\end{proof}

\begin{remark*} The condition that $M$ is of Tate type is not necessary for $M$ to be a Tate
vector space; for example, it suffices to require that $M$ has a finitely generated submodule $M'$ such that $M/M'$ is a torsion module.
\end{remark*}

\begin{proposition} \label{pp:tate}
Let $V$ be a Tate space. Suppose an operator $\opz:V\to V$ satisfies the following conditions:
\begin{enumerate}
\item\label{it:Tate1} 
$\opz$ is continuous, open and (linearly) compact. In other words, if $V'\subset V$ is
an open linearly compact subspace, then so are $\opz(V')$ and $\opz^{-1}(V')$.

\item\label{it:Tate3} 
$\opz$ is contracting. In other words, $\opz^n\to0$ in the sense that for any linearly
compact subspace $V'\subset V$ and any open subspace $U\subset V$, 
we have $\opz^n(V')\subset U$ for $n\gg0$.  
\end{enumerate}
Then there exists a unique structure of a Tate type $A$-module on $V$ such that $z\in A$ acts as $\opz$ and the topology on $V$ coincides with the $z$-adic topology. 

This induces an
equivalence between the category of $A$-modules of Tate type and pairs $(V,\opz)$, where
$V$ is a Tate space and $\opz$ is an operator satisfying \eqref{it:Tate1} and \eqref{it:Tate3}.
\end{proposition}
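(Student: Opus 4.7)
The plan is to construct the $A$-module structure directly from a pair $(V,\opz)$ satisfying the hypotheses, then verify it is of Tate type with the $z$-adic topology. First I would replace the given open linearly compact $V_0\subset V$ by an $\opz$-stable one: set $V_0' = V_0+\opz(V_0)+\opz^2(V_0)+\cdots$. The contracting property gives $\opz^n(V_0)\subset V_0$ for $n\gg0$, so this is actually a finite sum of open linearly compact subspaces and hence remains open and linearly compact. Renaming, assume $\opz(V_0)\subset V_0$.

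Next I define the $A$-action by $\bigl(\sum_n c_nz^n\bigr)v=\sum_n c_n\opz^nv$. For any $v\in V$ and any open $U\subset V$, applying the contracting condition to the linearly compact singleton $\{v\}$ shows $\opz^nv\in U$ for $n\gg0$, so the series converges; standard checks then give a continuous $A$-module structure with $z$ acting as $\opz$. Uniqueness of this structure is automatic, since $\kk[z]$ is dense in $A=\kk[[z]]$ in any topology compatible with a contracting $z$-action.

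The heart of the argument is to show $V_0$ is finitely generated over $A$. Since $V_0$ and $\opz(V_0)$ are both open linearly compact with $\opz(V_0)\subset V_0$, the quotient $V_0/\opz(V_0)$ is simultaneously linearly compact (as a quotient of $V_0$) and discrete (since $\opz(V_0)$ is open), hence finite-dimensional. Pick a basis $\bar v_1,\dots,\bar v_k$ and lifts $v_i\in V_0$. For any $v\in V_0$, iterate: write $v=\sum_i c_i^{(0)}v_i+\opz w_1$ with $w_1\in V_0$, then $w_1=\sum_i c_i^{(1)}v_i+\opz w_2$, and so on. The partial sums $\sum_{n<N}\opz^n\bigl(\sum_i c_i^{(n)}v_i\bigr)$ equal $v-\opz^Nw_N$, which tends to $v$ by the contracting property applied to $V_0$; setting $a_i(z)=\sum_n c_i^{(n)}z^n\in A$ yields $v=\sum_i a_i(z)v_i$. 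This Nakayama-style step, combining the algebraic iteration with the topological convergence of formal power series, is the main nontrivial point.

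Finally I would check that $V$ is of Tate type and that the two topologies agree. The quotient $V/V_0$ is torsion because $\opz^nv\in V_0$ for $n\gg0$ for every $v$, and $\Ann_z(V/V_0)=\opz^{-1}(V_0)/V_0$ is finite-dimensional since $\opz^{-1}(V_0)$ is open linearly compact (by the hypothesis that preimages of open linearly compacts are such) and contains $V_0$ with finite codimension by commensurability of open linearly compact subspaces of a Tate space. The $z$-adic topology on $V$ has $\{\opz^kV_0\}=\{z^kV_0\}$ as a neighborhood base of $0$: each $\opz^kV_0$ is open by the openness of $\opz$, and any given open subspace contains $\opz^kV_0$ for $k\gg0$ by contraction, so the two topologies coincide. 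The inverse functor, sending an $A$-module $M$ of Tate type to $(M,z\cdot)$ with its $z$-adic topology, satisfies the hypotheses by routine commutative algebra, and morphisms in the two categories correspond because any continuous $\opz$-commuting map automatically respects the completed $A$-action and vice versa.
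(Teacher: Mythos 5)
Your proof is correct and takes essentially the same route as the paper's: define the $A$-action by convergent power series, replace $V_0$ by the $\opz$-stable open linearly compact subspace $\sum_i\opz^i(V_0)$ (a finite sum by contraction), obtain finite generation by the topological Nakayama argument, and check that the quotient is torsion and cofinitely generated and that $\{\opz^kV_0\}$ is a neighborhood basis in both topologies — the paper merely cites the Nakayama Lemma where you spell out the iteration. (One cosmetic point: apply the contracting condition to the finite-dimensional subspace $\kk v$ rather than to the ``singleton $\{v\}$'', which is not a subspace.)
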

\begin{proof}
The proof is quite straightforward. The action of $A$ on $V$ is naturally defined as
$$\left(\sum{c_i}z^i\right)v=\sum c_i\opz^i v,$$
where the right-hand side converges by \eqref{it:Tate3}. 
Let $V'\subset V$ be a linearly compact open subspace. By \eqref{it:Tate3},
the infinite sum $$M'=\sum_i \opz^i V'$$ stabilizes after finitely many summands, so by \eqref{it:Tate1}, $M'\subset V$ is $\opz$-invariant, open, and linearly compact.

Clearly, $\opz^iM'$ form a basis of neighborhoods of zero. The Nakayama Lemma now implies that
$M'$ is a finitely generated $A$-module. Finally, $V/M'$ is a torsion $M'$-module
(by \eqref{it:Tate3}) which is cofinitely generated (by \eqref{it:Tate1}). Therefore, $V$ is
of Tate type. 
\end{proof}

We use the following terminology. For a Tate space $V$, an operator $\opz:V\to V$
is \emph{nicely contracting} if $\opz$ satisfies the hypotheses of Proposition~\ref{pp:tate}; an operator $\opz$ is
\emph{nicely expanding} if it is invertible and $\opz^{-1}$ is nicely contracting. 

We apply Proposition~\ref{pp:tate} in the following situation: $M\in\DHol{A}$ (with $z$-adic topology), and 
$\opz:M\to M$ is a differential operator $\opz\in\D_A$. We determine whether $\opz$ is nicely contracting (or nicely expanding)
using the description of bundles with connections on formal disk (see \cite{Mal}). 

\begin{examples} \label{ex:contracting}
Suppose $M\in\DHol{K}$, where $K=\kk((z))$ is the fraction field of
$A$. Fix an integer $\alpha>0$. Then $z^\alpha\partial_z$ is strongly contracting on $M=\jo_*M$ if and only if slopes of all components of $M$ are less than
$\alpha-1$. In other words, the condition is $M\in\DHol{K}^{<\alpha-1}$.

Now consider $\jo_!M\in\DHol{A}$. Then $z^\alpha\partial_z$ is strongly expanding on $\jo_!M$ if and only if slopes of all
components of $M$ are greater than $\alpha-1$. In other words, the condition is $M\in\DHol{K}^{>\alpha-1}$. 
Here
we work with $\jo_!M$ to guarantee that the operator is invertible.

Finally, consider on $M$ the operator $p(z^2\partial_z)$, where $p(z)\in\kk[z]$ is the minimal polynomial of $x\in\A1$. Then
$p(z^2\partial_z)$ is contracting on $M$ if and only if $M\in\DHol{K}^{\le 1,(x)}$.
\end{examples}

\begin{proposition} 
\begin{enumerate} 
\item The functor $$M\mapsto j_{0*}\jo_!(M),\quad M\in\DHol{K_0}$$
is an equivalence between $\DHol{K_0}$ and the category of $W$-modules $V$ equipped with a structure of a Tate space such that 
$d/dz\in W$ is nicely expanding and $z\in W$ is nicely contracting on $V$.

\item\label{cat:1} More generally, let $p(z)\in\kk[z]$ be the minimal polynomial of $x\in\A1$. Then
$$M\mapsto j_{x*}\jo_!(M),\quad M\in\DHol{K_x}$$
is an equivalence between $\DHol{K_x}$ and the category of $W$-modules $V$ equipped with a structure of a Tate space such that
$d/dz$ is nicely expanding and $p(z)$ is nicely contracting on $V$.

\item\label{cat:2} Again, let $p(z)$ be the minimal polynomial of $x\in\A1$. Then $\jo_{\infty*}$ is an equivalence between 
$$\DHol{K_\infty}^{\le 1,(x)}\subset\DHol{K_\infty}$$
and the category of $W$-modules $V$ equipped with a structure of a Tate space such that 
$z$ is nicely expanding and $p(d/dz)$ is nicely contracting on $V$.

\item\label{cat:3} Finally, $\jo_{\infty*}$ is an equivalence between 
$$\DHol{K_\infty}^{>1}\subset\DHol{K_\infty}$$ 
and the category of $W$-modules $V$ such that
$z$ and $d/dz$ are nicely expanding on $V$.
\end{enumerate}
\end{proposition}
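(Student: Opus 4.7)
The plan is to deduce all four statements from Proposition~\ref{pp:tate}, Proposition~\ref{pp:functors}, and Example~\ref{ex:contracting}. The first of these converts a Tate space with a nicely contracting operator into an $A$-module of Tate type; the second characterizes the essential images of $\jo_!$ and $\jo_*$ in $\DHol{A}$ via invertibility of $d/dz$ or $z$; the third identifies, case by case, precisely for which $\D_K$-modules the operators $z^\alpha\partial_z$, $p(z)$, $p(d/dz)$, etc.\ are nicely contracting or expanding. The three ingredients fit together almost mechanically.

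For part~(1), given $M \in \DHol{K_0}$ the $\D_{A_0}$-module $\jo_!(M)$ is of Tate type: a f.g.\ $A_0$-lattice exists because $M$ is finite-dimensional over $K_0$, and the quotient is torsion with finite-dimensional $z$-annihilator. Hence by Proposition~\ref{pp:tate}, $z$ is nicely contracting in the $z$-adic topology; and $d/dz$ is invertible by Proposition~\ref{pp:functors}\eqref{it:!} and nicely expanding by Example~\ref{ex:contracting} with $\alpha = 0$ (any slope is $\ge 0 > -1$). Conversely, given a Tate $W$-module $V$ with $z$ nicely contracting and $d/dz$ nicely expanding, Proposition~\ref{pp:tate} upgrades the action of $\kk[z]$ to an action of $A_0$ making $V$ a $\D_{A_0}$-module of Tate type; invertibility of $d/dz$ together with Proposition~\ref{pp:functors}\eqref{it:!} places $V$ in $\jo_!(\DMod{K_0})$; and $\jo^*(V) = K_0 \otimes_{A_0} V$ is finite-dimensional because torsion vanishes on passing to the fraction field, so $V \in \jo_!(\DHol{K_0})$. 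Full faithfulness is then Lemma~\ref{lm:topology}.

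Part~(2) follows the same outline with $z$ replaced by a uniformizer $z - \alpha$ of $A_x = \kk_x[[z-\alpha]]$ for any geometric point $\alpha$ of $x$. Since $p(z) = (z-\alpha)u(z)$ with $u \in A_x^\times$, the conditions "$p(z)$ nicely contracting" and "$(z-\alpha)$ nicely contracting" on $V$ are equivalent; the $\kk_x$-structure arises automatically from the natural identification $A_x \simeq \kk[z]\otimes_{\kk[p(z)]} \kk[[p(z)]]$, so the combined action of $\kk[z] \subset W$ and of $\kk[[p(z)]]$ (the latter produced by Proposition~\ref{pp:tate}) is precisely an action of $A_x$.

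Parts~(3) and~(4) work at infinity via $\zeta = 1/z$, so that $d/dz = -\zeta^2\, d/d\zeta$. Any $M \in \DHol{K_\infty}$ is automatically a Tate space with $\zeta$ nicely contracting, hence $z = 1/\zeta$ is always nicely expanding. For part~(4), Example~\ref{ex:contracting} with $\alpha = 2$ shows $\zeta^2\partial_\zeta$ (hence $d/dz$) is nicely expanding iff all slopes exceed $1$, which is exactly the condition defining $\DHol{K_\infty}^{>1}$. For part~(3), the final assertion of Example~\ref{ex:contracting} identifies "$p(d/dz)$ nicely contracting" with "$M \in \DHol{K_\infty}^{\le 1,(x)}$", after a routine sign adjustment relating $\zeta^2\partial_\zeta$ to $d/dz$. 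The only substantive point throughout is that Tate-type structure plus invertibility of the relevant operator forces finite $K$-rank of the associated $\D_K$-module; this is the sole nontrivial step in passing from topological data back to $\DHol{K_x}$.
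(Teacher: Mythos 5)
Your proposal is correct and is essentially the paper's own argument: the paper proves this proposition simply by citing Examples~\ref{ex:contracting} (together with the surrounding machinery of Proposition~\ref{pp:tate}, Proposition~\ref{pp:functors}, and Lemma~\ref{lm:topology}), which is exactly the combination you spell out case by case. The only place where you are slightly more optimistic than a full write-up would allow is the converse direction, where invoking Proposition~\ref{pp:functors}\eqref{it:!} tacitly requires knowing $V$ is holonomic over $\D_{A_x}$ (not just of Tate type with finite rank), but this follows by splitting off the torsion part (a finite sum of $\delta$'s, by finiteness of $\Ann_z$) and embedding the torsion-free part into $\jo_*$ of its generic fiber, so it is a fillable detail rather than a gap.
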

\begin{proof} Follows from Examples~\ref{ex:contracting}.\end{proof}

Clearly, the Fourier transform interchanges the categories \eqref{cat:1} and \eqref{cat:2}, and sends category \eqref{cat:3} to itself.
This completes the proof of Theorem~\ref{th:localfourier}. \qed

\subsection{Example: local Fourier transform of the Kummer local system} \label{sc:Kummer}

Let $\cK_0^\alpha\subset\DHol{K_0}$ be the Kummer local system at $0$ with residue $\alpha$, as in Theorem~\ref{th:computeradon}. Recall that $\cK_0^\alpha$ is 
$\kk((z))$ equipped with the derivation
$$\partial_z=\frac{d}{dz}+\frac{\alpha}{z}.$$
One can view the generator $1\in\cK_0^\alpha$ as $z^\alpha$, then derivation is the usual
derivative.  Up to isomorphism,
$\cK_0^\alpha$ depends on $\alpha$ only modulo $\Z$.
Let us compute $\Four(0,\infty)\cK_0^\alpha\in\DHol{K_\infty}$ 
following the recipe of Section~\ref{sc:explicit}.

Assume first $\alpha\not\in\Z$. Then $\jo_*\cK_0^\alpha=\jo_!\cK_0^\alpha=\kk((z))$.
By \eqref{eq:localfourier}, we see that 
$$\zeta^k\cdot 1=\frac{\Gamma(-\alpha-k)}{\Gamma(-\alpha)}z^k,$$
so $\kk((z))=\kk((\zeta))\cdot 1$. The derivation $\partial_\zeta$ on $\kk((\zeta))\cdot 1$ is determined by
$$\partial_\zeta(1)=-\alpha(\alpha+1)\frac{1}{z}=(\alpha+1)\zeta^{-1}\cdot 1.$$
That is, the resulting $\D_{K_\infty}$-module is $\cK_\infty^{\alpha+1}$.

Suppose now $\alpha\in\Z$. Without loss of generality, we may assume that $\alpha=0$. Then
$$\jo_!\cK_0^0=\kk[[z]]\oplus\kk[\partial_z]\partial_z(1),\quad z\partial_z(1)=0.$$
One can view $1\in\jo_!\cK_0^0$ as the Heaviside step function; $\partial_z(1)$ is the delta function.
Then $$\zeta^k\cdot 1=\begin{cases}\frac{(-1)^k}{k!}z^k,\quad k\ge0\\
(-1)^k\partial_z^{-k}(1),\quad k<0.\end{cases}$$
Again, $\jo_!\cK^0_0=\kk((\zeta))\cdot 1$. The derivation $\partial_\zeta$ satisfies
$$\partial_\zeta(1)=-\partial_z(1)=\zeta^{-1}\cdot 1,$$
so as a $\D_{K_\infty}$-module, we get $\cK_\infty^1$.

To summarize, 
\begin{equation}
\Four(0,\infty)\cK_0^\alpha\simeq\cK_\infty^{\alpha+1}\text{ for all }\alpha\in\kk.
\label{eq:kummer}
\end{equation}

\subsection{Fourier transform and formal type}\label{sc:fourier and type}
Let $L$ be a local system on open subset $U\subset\A1$, and consider $M=j_{!*}L\in\DHol{\A1}$. Here $j=j_U:U\hookrightarrow\A1$. 
As in Section \ref{sc:formal type}, the formal type of $L$ is the collection of isomorphism classes $\{[\Psi_x(L)]\}_{x\in\p1}$.
By Corollary~\ref{co:formaltype}, we can instead use the collection
$$\left(\{[\Phi_x(M)]\}_{x\in\A1},\Psi_\infty(M)\right).$$

Suppose now that $\Four(M)$ is also a Goresky-MacPherson extension $\Four(M)=\hat\jmath_{!*}\hat L$
for a local system $\hat L$ on an open subset $\hat U\subset\A1$ (here $\hat\jmath:\hat U\hookrightarrow\A1$). Then \eqref{eq:ainfty}, \eqref{eq:infty}
determine the formal type of $\hat L$ given the formal type of $L$. 
This allows us to relate isotypical deformations of $L$ to those of $\hat L$.

\begin{corollary} Let $L$ and $\hat L$ be as above. 
\begin{enumerate}
\item \label{it:iso1} For any Artinian local ring $R$ and any isotypical $R$-deformation $L_R$ of $L$,
$$\hat L_R=\Four(j_{!*}L_R)|_{\hat U}$$ is an isotypical $R$-deformation of $\hat L$;

\item \label{it:iso2}
This yields a one-to-one correspondence between isotypical deformations of $L$ and of
$\hat L$;

\item \label{it:iso3} $L$ is rigid if and only if $\hat L$ is rigid. 
\end{enumerate}
\label{co:isotypicalfourier}
\end{corollary}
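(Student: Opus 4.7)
The idea is to deduce everything from Corollary~\ref{co:localfourier}, which expresses the local invariants of $\Four(M)$ at every point of $\p1$ as local Fourier transforms of the local invariants of $M$. Since each of $\Four(x,\infty)$, $\Four(\infty,x)$, $\Four(\infty,\infty)$ is a $\kk$-linear equivalence, it is automatically functorial in the $R$-variable and sends isotypical families to isotypical families.

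For~\eqref{it:iso1}, fix an isotypical $R$-deformation $L_R$ of $L$ and set $M_R:=j_{!*}L_R$. Since $\Four$ is $\kk$-linear and preserves holonomicity, $\Four(M_R)$ is a holonomic $\D$-module with a flat $R$-action specializing to $\Four(M)$ at $\kk$. The isotypicality of $L_R$, combined with Lemma~\ref{lm:Phi!*} (which expresses $\Phi_x(M_R)$ in terms of $\Psi_x(L_R)$ and its horizontal sections), gives $R\otimes_\kk\D_{K_x}$-module isomorphisms
$$\Phi_x(M_R)\cong R\otimes_\kk\Phi_x(M),\qquad \Psi_\infty(M_R)\cong R\otimes_\kk\Psi_\infty(M),$$
the latter compatibly with the slope decomposition~\eqref{eq:g1le1}--\eqref{eq:le1}. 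Applying the $R$-linear local Fourier transforms to both sides and invoking the isomorphisms of Corollary~\ref{co:localfourier} yields
$$\Phi_y(\Four(M_R))\cong R\otimes_\kk\Phi_y(\Four(M)),\qquad \Psi_\infty(\Four(M_R))\cong R\otimes_\kk\Psi_\infty(\Four(M)).$$
Since $\Four(M)=\hat\jmath_{!*}\hat L$ is smooth on $\hat U$, the right-hand side vanishes for $y\in\hat U$, so $\Four(M_R)$ is smooth on $\hat U$ and $\hat L_R:=\Four(M_R)|_{\hat U}$ is a local system with flat $R$-action. To conclude that $\Four(M_R)=\hat\jmath_{!*}\hat L_R$, I would verify the vanishing conditions of Proposition~\ref{pp:functors}\eqref{it:!} (and the dual ones) at each $y\in\A1-\hat U$ by Artinian induction up the filtration of $R$, with the base case being that they hold for $\Four(M)$. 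Isotypicality of $\hat L_R$ is then read off from the displayed isomorphisms.

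Claim~\eqref{it:iso2} follows by running the same construction for the inverse Fourier transform: the functoriality in $R$ shows the two assignments $L_R\leftrightarrow\hat L_R$ are mutually inverse. Claim~\eqref{it:iso3} is the specialization of~\eqref{it:iso2} to $R=\kk[\epsilon]/(\epsilon^2)$, where the bijection restricts to first-order isotypical deformations and rigidity is by definition the vanishing of the nontrivial such deformations (cf. Lemma~\ref{lm:deformation}). Equivalently, $\rig(L)$ depends only on the formal type, and by Corollary~\ref{co:localfourier} the formal types of $L$ and $\hat L$ determine one another, so $\rig(L)=\rig(\hat L)$.

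I expect the main obstacle to lie in the middle-extension step of~\eqref{it:iso1}: given that $\Four(M_R)$ has the ``correct'' formal invariants, one must actually show it is $\hat\jmath_{!*}\hat L_R$ rather than some other $R$-deformation of $\hat\jmath_{!*}\hat L$. This reduces to showing that the vanishing of $\Hom_{\D_{A_y}}(A,\,\cdot\,)$, $\Hom_{\D_{A_y}}(\,\cdot\,,\delta)$, and the corresponding $\Ext^1$'s lifts from $\Four(M)$ to $\Four(M_R)$; this ought to follow by nilpotent induction along a composition series of $R$, but the bookkeeping requires care. A secondary point is ensuring that each local Fourier equivalence of Section~\ref{sc:localfourierproof} genuinely commutes with $R\otimes_\kk(-)$, which should be immediate from naturality combined with the observation that the Tate-space/nicely-contracting-operator description of the relevant subcategories of $\DHol{K_x}$ and $\DHol{K_\infty}$ is unaffected by base change to a finite-length $\kk$-algebra.
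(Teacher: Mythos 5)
Your proposal is correct and rests on the same pillars as the paper's proof: Lemma~\ref{lm:Phi!*} plus Corollary~\ref{co:localfourier} to match the local data of $j_{!*}L_R$ and $\Four(j_{!*}L_R)$, followed by purely local arguments on the formal disk. The difference is in how the two local issues are apportioned. The paper proves part (1) without ever establishing $\Four(j_{!*}L_R)=\hat\jmath_{!*}\hat L_R$: from $R$-flatness of $\Four(j_{!*}L_R)$ and the isomorphisms $\Phi_x(\Four(j_{!*}L_R))\simeq R\otimes_\kk\Phi_x(\Four(j_{!*}L))$ it deduces $\Psi_x(\hat L_R)\simeq R\otimes_\kk\Psi_x(\hat L)$ directly, by reducing to unipotent monodromy and computing with the quiver description \eqref{eq:!*}; the middle-extension identity is then exactly what is proved for part (2), again via \eqref{eq:!*}. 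You instead prove the middle-extension identity first, by checking the absence of sub- and quotient-modules supported at the singular points along a composition series of $R$; this is a viable alternative to the quiver computation (the induction works because $\Hom(\delta,-)$ kills submodules and $\Hom(-,\delta)$ kills quotients step by step along the nilpotent filtration), and it makes part (2) immediate. The one place you are too quick is the final "read off" in (1): at a finite singular point $y$ you control only $\Phi_y$, and passing from $\Phi_y(\hat\jmath_{!*}\hat L_R)\simeq R\otimes_\kk\Phi_y(\hat\jmath_{!*}\hat L)$, i.e. $\Psi_y(\hat L_R)/\Psi_y(\hat L_R)^{hor}\simeq R\otimes_\kk\bigl(\Psi_y(\hat L)/\Psi_y(\hat L)^{hor}\bigr)$, to the isotypicality statement $\Psi_y(\hat L_R)\simeq R\otimes_\kk\Psi_y(\hat L)$ still requires a short deformation argument using $R$-flatness (on the regular-unipotent part this is precisely the local statement the paper settles with \eqref{eq:!*}; on the remaining parts $\Psi$ and $\Phi$ agree). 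Spell that step out and your proof is complete.
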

\begin{proof}
\eqref{it:iso1} Set $\hat M=\Four(j_{!*}L)$, $\hat M_R=\Four(j_{!*}L_R)$. By assumption, $$\Psi_x(L_R)\simeq\Psi_x(L)\otimes_\kk R\quad (x\in\p1),$$ 
so Lemma~\ref{lm:Phi!*} implies that
$$\Phi_x(j_{!*}L_R)\simeq\Phi_x(j_{!*}L)\otimes_\kk R\quad (x\in\A1).$$ 
Therefore,
$$\Psi_\infty(\hat M_R)\simeq\Psi_\infty(\hat M)\otimes_\kk R,\qquad\Phi_x(\hat M_R)\simeq\Phi_x(\hat M)\otimes_\kk R\quad (x\in\A1)$$
by \eqref{eq:ainfty}, \eqref{eq:infty}.

Now note that $j_{!*}L_R$ is an $R$-deformation of 
$j_{!*}L\in\DHol{\A1}$; that is, $j_{!*}L_R$
is $R$-flat and $j_{!*}L=\kk\otimes_R(j_{!*}L_R)$. Therefore, $\hat M_R$ is a flat deformation of
$\hat M$. Finally, $\Psi_x(\hat M_R)$ is a flat deformation of $\Psi_x(\hat M)$ for all $x\in\p1$. Now it is easy to
see that  
$$\Psi_x(M_R)\simeq\Psi_x(M)\otimes_\kk R\quad(x\in\p1).$$
Note that the statement is local in the sense that it concerns only the image of $M$ in
$\DHol{A_x}$. One can then use the argument of Section~\ref{sc:proofs}: first reduce to the case of unipotent monodromy, and then apply \eqref{eq:!*}.

\eqref{it:iso2} It suffices to check that 
$\hat M_R=\hat j_{!*}(M_R|_{\hat U})$. Again, the claim is essentially local and can be proved
using \eqref{eq:!*}.

\eqref{it:iso3} Follows from \eqref{it:iso2} applied to first-order deformations. 
\end{proof}

\begin{remark*} Corollary~\ref{co:isotypicalfourier} remains true for isotypical families parametrized
by arbitrary schemes. In other words, the Fourier transform gives an isomorphism between the
moduli spaces of connections of corresponding formal types. However, we do not consider
families of connections parametrized by schemes in this paper.
\end{remark*}

\begin{corollary}[{\cite[Theorem 4.3]{BE}, compare \cite[Theorem 3.0.3]{Ka}}] 
For $L$ and $\hat L$ as above, $$\rig(L)=\rig(\hat L).$$
\label{co:rigidityfourier}
\end{corollary}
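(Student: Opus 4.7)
The plan is to match each of the three terms of
$$\rig(L) = \dim H^0_{dR}(\p1, j_{!*}\END L) - \dim H^1_{dR}(\p1, j_{!*}\END L) + \dim H^2_{dR}(\p1, j_{!*}\END L)$$
with the corresponding term of $\rig(\hat L)$. First I would reduce the $H^2$-term to $H^0$ via Poincar\'e duality: $\END L = L\otimes L^\vee$ is Verdier self-dual, and since $j_{!*}$ commutes with Verdier duality (Section~\ref{sc:!*}), the $\D_{\p1}$-module $j_{!*}\END L$ is self-dual on the projective curve $\p1$, whence $\dim H^2 = \dim H^0$. The same reasoning applies to $\hat L$, so it suffices to establish $\dim H^0(\p1, j_{!*}\END L) = \dim H^0(\p1, \hat\jmath_{!*}\END\hat L)$ and $\dim H^1(\p1, j_{!*}\END L) = \dim H^1(\p1, \hat\jmath_{!*}\END\hat L)$.

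The $H^1$-equality is immediate from the previous results. By Lemma~\ref{lm:deformation}, $H^1(\p1, j_{!*}\END L)$ is canonically identified with the space of first-order isotypical deformations of $L$, and analogously for $\hat L$. Corollary~\ref{co:isotypicalfourier}\eqref{it:iso2} produces a functorial bijection between isotypical deformations of $L$ and of $\hat L$ over any Artinian base; specialising to $R = \kk[\epsilon]/(\epsilon^2)$ yields the required equality of dimensions.

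For the $H^0$-equality I would identify $\underline{\End}(j_{!*}L) \cong j_{!*}\END L$ as $\D_{\p1}$-modules. The reason is structural: $j_{!*}L$ has no sub- or quotient-objects supported on $\p1 - U$ (this is the characterising property of the middle extension), so neither does $\underline{\End}(j_{!*}L)$, forcing it to be the middle extension of its restriction $\END L$ to $U$. Granting this identification,
$$\dim H^0(\p1, j_{!*}\END L) = \dim\End_{\D_{\p1}}(j_{!*}L) = \dim\End_{\D_U}(L)$$
by full faithfulness of $j_{!*}$. Since the Fourier transform is an equivalence of categories on $\DHol{\A1}$ and the endomorphism algebras of $j_{!*}L$ computed on $\A1$ and on $\p1$ coincide (again by full faithfulness of the extension at $\infty$), we obtain $\End(L)\cong\End(\hat L)$ and hence the $H^0$-dimensions on the two sides match. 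Assembling the three steps yields $\rig(L) = 2\dim H^0(L) - \dim H^1(L) = 2\dim H^0(\hat L) - \dim H^1(\hat L) = \rig(\hat L)$.

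The delicate step is the $H^0$-identification $\underline{\End}(j_{!*}L) = j_{!*}\END L$. While it is a standard property of middle extensions of local systems, a rigorous justification requires either invoking a general fact about the absence of boundary components in $\underline{\End}$ of a middle extension, or a local check using the explicit descriptions of $\DHol{A}$ and $\jo_{!*}$ recorded in Section~\ref{sc:!*} and Section~\ref{sc:proofs}. An alternative approach that sidesteps this identification would be a direct comparison of the local terms in the Euler-Poincar\'e formula on both sides, using Theorem~\ref{th:localfourier} and Proposition~\ref{pp:rankfourier}, but this route is considerably more computational.
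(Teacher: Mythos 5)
Your proposal is correct and takes essentially the same route as the paper: a degreewise comparison in which $H^1$ is matched via the bijection on isotypical first-order deformations (Corollary~\ref{co:isotypicalfourier}\eqref{it:iso2}), $H^0$ via the identification $H^0_{dR}(\p1,\overline\jmath_{!*}\END(L))=\End(L)=\End(j_{!*}L)$ together with the Fourier equivalence, and $H^2$ via Verdier duality. The step you flag as delicate is precisely this $H^0$-identification, which the paper asserts without further comment, so your argument requires no genuinely different machinery.
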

\begin{proof}
It suffices to establish a natural isomorphism $$H^i_{dR}(\p1,\overline j_{!*}(\END(L)))\iso H^i_{dR}(\p1,\overline{\hat\jmath}_{!*}(\END(\hat L))),\quad i=0,1,2.$$ 
Here $\overline j:U\hookrightarrow\p1$ and $\overline{\hat\jmath}:\hat U\to\p1$ are the natural embeddings. For $i=1$, the isomorphism is given by Corollary~\ref{co:isotypicalfourier}\eqref{it:iso2}. 
For $i=0$, we have
$$H^0_{dR}(\p1,\overline j_{!*}(\END(L)))=\End(L)=\End(j_{!*}(L)),$$ 
and the isomorphism is given by the Fourier functor. For $i=2$, we use the 
Verdier duality
$$H^2_{dR}(\p1,\overline j_{!*}(\END(L)))=(H^0_{dR}(\p1,\overline j_{!*}(\END(L))))^\vee.$$
\end{proof}

\section{Katz-Radon transform}\label{sc:radon}

\subsection{Twisted $\D$-modules on $\p1$} \label{sc:TDOmodules}
Denote by $\D_1$ the sheaf of rings of twisted differential operators (TDOs) on $\p1$ 
acting on $\cO(1)$ (see \cite{BB2} for the definition of TDO rings). The TDO rings form a Picard category over $\kk$, so we can scale $\D_1$ by any $\lambda\in\kk$. Denote the resulting TDO ring by $\D_\lambda$. Informally, $\D_\lambda$
is the ring acting on $\cO(1)^{\otimes\lambda}$. 

Here is an explicit description of $\D_\lambda$. Let us write $\p1=\A1\cup\{\infty\}$.
Then $(\D_\lambda)|_\A1$ is identified with $\D_\A1$, while at the neighborhood of $\infty$, $\D_\lambda$ is generated by functions and the vector field $$\frac{\partial}{\partial \zeta}+\frac{\lambda}{\zeta}.$$
As before, $\zeta$ is the coordinate at $\infty$.

Denote by $\DMod{\lambda}$ the category of quasicoherent $\D_\lambda$-modules, and by
$\DHol{\lambda}$ the full subcategory of holonomic modules.

\begin{remark}\label{rm:complex}
If $\kk=\C$, we can approach $\D_\lambda$-modules analytically. 
We view quasi-coherent sheaves on $\p1$ as sheaves of modules over $C^\infty$-functions
on $\p1$ equipped with `connections in the anti-holomorphic direction'. In this way,
$\D_\p1$-modules can be thought of as $C^\infty({\mathbb P}_\C^1)$-modules equipped with a flat connection. 

Consider $\lambda\cdot c_1(\cO(1))\in H^2(\p1,\C)$. Let us represent it by a $C^\infty$-differential form $\omega$. We can then view $\D_\lambda$-modules as 
$C^\infty({\mathbb P}_\C^1)$-modules equipped with a connection whose curvature equals $\omega$. This can also be used to describe the TDO ring $\D_\lambda$ (as holomorphic
differential operators acting on such modules). From this point of view, the explicit description
of $\D_\lambda$ presented above corresponds to taking $\omega$ equal to a multiple of the
$\delta$-function at $\infty$.
\end{remark}

From now on, assume $\lambda\not\in\Z$. We then have the following equivalent descriptions of
$\DMod{\lambda}$. Let $$W_2=\kk\left\langle z_1,z_2,\frac{\partial}{\partial z_1},\frac{\partial}{\partial z_2}\right\rangle$$ 
be the algebra of differential operators on $\A2$. Define a grading on $W_2$ by 
$$\deg(z_1)=\deg(z_2)=1; \quad\deg\left(\frac{\partial}{\partial z_1}\right)=\deg\left(\frac{\partial}{\partial z_2}\right)=-1.$$ 
This grading corresponds to the natural 
action of $\gm$ on $\A2$. 

We denote by $\Mod(W_2)_\lambda$ the category of graded $W_2$ modules $M=\bigoplus_{i\in\Z} M^{(i)}$ such that the Euler vector field 
$$z_1\pd1+z_2\pd2$$ acts on $M^{(i)}$ as $\lambda+i$. 

\begin{remark}\label{rm:twistedmodulesgeometrically}
Geometrically, $M$ is a $\D$-module on $\A2$. The grading defines an
action of $\gm$ on $M$, so $M$ is weakly $\gm$-equivariant. The restriction on the action of the Euler vector field is a twisted version of the strong 
equivariance (untwisted strong equivariance corresponds to $\lambda=0$). Informally, strong equivariance requires that the restriction of 
$M$ to $\gm$ orbits is constant, while in the twisted version, the
restriction is a local system with
regular singularities and scalar monodromy $\exp(2\pi\sqrt{-1}\lambda)$. 
In other words, we work 
with monodromic $\D$-modules on $\A2$.
\end{remark}

\begin{proposition} The categories $\DMod{\lambda}$ and $\Mod(W_2)_\lambda$ are naturally equivalent. The equivalence is given by $$M\mapsto\bigoplus_i H^0(\p1,M\otimes_\cO\cO(i)).$$ 
\label{pp:localization}
\end{proposition}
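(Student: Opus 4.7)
The plan is to factor the equivalence through the geometric picture of Remark~\ref{rm:twistedmodulesgeometrically}: realize $\D_\lambda$-modules on $\p1 = (\A2\setminus\{0\})/\gm$ as $\lambda$-monodromic $\D$-modules on $\A2$. Tautologically, $\Mod(W_2)_\lambda$ is the category of weakly $\gm$-equivariant $\D_{\A2}$-modules on which the Euler field $E = z_1\pd{1} + z_2\pd{2}$ acts on the weight-$i$ component as the scalar $\lambda + i$, i.e.\ the category of $\lambda$-monodromic $\D$-modules on $\A2$. I would therefore establish the proposition in two steps: (a) pullback along $j:\A2\setminus\{0\}\hookrightarrow\A2$ is an equivalence between $\lambda$-monodromic $\D$-modules on $\A2$ and on $\A2\setminus\{0\}$; (b) descent along the $\gm$-torsor $\pi:\A2\setminus\{0\}\to\p1$ identifies the latter with $\DMod{\lambda}$.

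For (a), any $\D_{\A2}$-module supported at the origin is a successive extension of $\delta_0 = \D_{\A2}/(\D_{\A2}z_1 + \D_{\A2}z_2)$, and a direct computation shows that $E$ acts on $\pd{1}^a\pd{2}^b\cdot\delta_0$ by the scalar $-2-a-b$; hence every simple subquotient supported at the origin has $E$-spectrum contained in $\Z_{\le -2}$. Since $\lambda\notin\Z$, no nonzero $\lambda$-monodromic module can contain either a submodule or a quotient supported at the origin, so for such a module $M$ both canonical maps $M\to j_*j^*M$ and $j_!j^*M\to M$ are isomorphisms; this gives the desired equivalence with $j_*$ as inverse.

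For (b) I would invoke the standard descent for $\gm$-torsors: weakly $\gm$-equivariant $\D$-modules on the total space correspond to modules over the TDO ring $(\pi_*\D_{\A2\setminus\{0\}})^{\gm}$ on $\p1$, and imposing that $E$ act by $\lambda+i$ on weight $i$ picks out precisely the TDO $\D_\lambda$. Under these identifications,
$$H^0(\p1, M\otimes\cO(i)) = H^0(\A2\setminus\{0\},\pi^*M)^{(i)} = (j_*\pi^*M)^{(i)},$$
where the superscript denotes the weight-$i$ subspace of the $\gm$-grading; summing over $i$ recovers the functor of the proposition. The main obstacle is verifying in (b) that the twist produced by fixing the $E$-weight to $\lambda+i$ really matches the specific TDO $\D_\lambda$ from Section~\ref{sc:TDOmodules}. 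This is a local check on the charts $\{z_1\neq 0\}$ and $\{z_2\neq 0\}$: writing sections of $\pi^*M$ as $z_1^\lambda$ times a function of $u=z_2/z_1$ on one chart and as $z_2^\lambda$ times a function of $\zeta=z_1/z_2$ on the other, the monodromicity condition produces exactly the operator $\partial/\partial\zeta+\lambda/\zeta$ appearing in the local description of $\D_\lambda$ near $\infty$.
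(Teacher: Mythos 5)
Your proposal follows essentially the same route as the paper: identify $\Mod(W_2)_\lambda$ with $\lambda$-monodromic $\D$-modules on $\A2$, descend along the $\gm$-torsor $\A2\setminus\{0\}\to\p1$ to get $\DMod{\lambda}$, and use $\lambda\notin\Z$ to rule out monodromic modules supported at the origin so that restriction along $\A2\setminus\{0\}\hookrightarrow\A2$ is an equivalence. The paper's proof is just a terser version of this (it asserts the vanishing of twisted equivariant modules supported at $\{0\}$ without your explicit Euler-weight computation on $\delta_0$ or the chart check of the twist), so your argument is correct and matches it.
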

\begin{proof} Let us use the geometric description of $\Mod(W_2)_\lambda$ presented in Remark~\ref{rm:twistedmodulesgeometrically}. 
It follows from definition that $\DMod{\lambda}$ can
be identified with twisted strongly equivariant $\D$-modules on $\A2-\{0\}$. Therefore, it suffices to show
that the categories of twisted strongly equivariant $\D$-modules on $\A2-\{0\}$ and on $\A2$ are equivalent. This
is true because there are no non-trivial twisted strongly equivariant $\D$-modules supported by $\{0\}$, as 
$\lambda\not\in\Z$.
\end{proof}

Note that the degree zero component $H^0(\p1,M)$ is naturally a module over the quotient
$$\left\{D\in W_2\left\st D\text{ is homogeneous, }\deg(D)=0\left\}\left/\left(z_1\pd1+z_2\pd2-\lambda\right)\right.\right.\right.\right..$$
It is easy to see that this quotient equals $H^0(\p1,\cD_\lambda)$.

\begin{remark*} $H^0(\p1,\cD_\lambda)$ is generated by $$z_i\pd{j}\quad(i,j=1,2).$$ The generators
satisfy the commutator relations of $\gl_2$. This allows us to identify $H^0(\p1,\cD_\lambda)$ 
with the quotient of the universal enveloping algebra of $U(\gl_2)$ corresponding to a central character of $U(\gl_2)$.
\end{remark*}

Proposition~\ref{pp:localization} implies the following localization result (as in \cite{BB1})

\begin{corollary} The correspondence 
\begin{equation}
\label{eq:H0}
M\mapsto H^0(\p1,M),\quad M\in\DMod{\lambda}
\end{equation} 
is an equivalence between the category $\DMod{\lambda}$ and the category $\Mod(H^0(\p1,\cD_\lambda))$ of $H^0(\p1,\cD_\lambda)$-modules. In other words, $\p1$ is $\cD_\lambda$-affine. 
\label{co:localization}
\end{corollary}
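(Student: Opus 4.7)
The plan is to use Proposition~\ref{pp:localization} to reduce the statement to an elementary computation inside $W_2$. By that proposition, $\DMod{\lambda}$ is equivalent to $\Mod(W_2)_\lambda$ via $M \mapsto \bigoplus_i H^0(\p1, M \otimes_{\cO} \cO(i))$, and under this equivalence the functor $M \mapsto H^0(\p1, M)$ corresponds to taking the degree-zero component $M \mapsto M^{(0)}$. Writing $A = H^0(\p1, \cD_\lambda) = W_2^{(0)}/(E-\lambda)$, where $E = z_1\pd1 + z_2\pd2$, it remains to show that the degree-zero functor $(-)^{(0)}\colon \Mod(W_2)_\lambda \to \Mod(A)$ is an equivalence of categories.

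I would construct a candidate quasi-inverse $\operatorname{Loc}\colon \Mod(A) \to \Mod(W_2)_\lambda$ by
$$\operatorname{Loc}(N) = W_2 \otimes_{W_2^{(0)}} N,$$
where $N$ is viewed as a $W_2^{(0)}$-module via the surjection $W_2^{(0)} \twoheadrightarrow A$. The grading on $W_2$ descends to a grading on $\operatorname{Loc}(N)$; using the commutator $[E,w] = \deg(w)\cdot w$ for homogeneous $w \in W_2$ together with $E\cdot n = \lambda n$ for $n \in N$, one checks that the Euler field acts on the $i$-th graded piece of $\operatorname{Loc}(N)$ as the scalar $\lambda + i$, so $\operatorname{Loc}(N)$ indeed lies in $\Mod(W_2)_\lambda$. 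Moreover $\operatorname{Loc}(N)^{(0)} = W_2^{(0)} \otimes_{W_2^{(0)}} N = N$, so one direction of the equivalence is essentially trivial.

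The heart of the proof is showing that for every $M \in \Mod(W_2)_\lambda$ the counit $\epsilon_M\colon \operatorname{Loc}(M^{(0)}) \to M$ is an isomorphism; this is where the hypothesis $\lambda \notin \Z$ enters. From the identities $\sum_j z_j \pd{j} = E$ and $\sum_j \pd{j} z_j = E+2$ in $W_2$, one obtains for every $v \in M^{(k)}$ the two formulas
$$v = \frac{1}{\lambda+k}\sum_j z_j(\pd{j}v), \qquad v = \frac{1}{\lambda+k+2}\sum_j \pd{j}(z_j v),$$
whose denominators are nonzero for every $k \in \Z$ precisely because $\lambda \notin \Z$. Iterating the first formula upward from $k = 0$ and the second downward yields $M = W_2 \cdot M^{(0)}$, so $\epsilon_M$ is surjective. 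For injectivity, note that $K := \ker(\epsilon_M)$ is itself an object of $\Mod(W_2)_\lambda$ whose degree-zero part vanishes (since $\epsilon_M$ restricts to the identity on degree zero); applying the surjectivity statement to $K$ gives $K = W_2 \cdot K^{(0)} = 0$.

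The only potential obstacle is bookkeeping: one must check that $\epsilon_M$ is well-defined on the tensor product and run the two inductions in parallel without a sign or index error. All genuine content, however, reduces to the single scalar condition $\lambda + k \neq 0$ for every $k \in \Z$, which is exactly the standing assumption $\lambda \notin \Z$.
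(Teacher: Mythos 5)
Your proof is correct, and it takes a more self-contained route than the paper. The paper's own argument is the Beilinson--Bernstein pattern: it reduces, via Proposition~\ref{pp:localization}, to checking that $M\mapsto H^0(\p1,M)$ is exact and kills no nonzero object, and then appeals to the general $\cD$-affinity formalism of \cite{BB1} to conclude that exactness plus conservativity yield the equivalence. You instead work entirely inside the graded model $\Mod(W_2)_\lambda$ and build the quasi-inverse $N\mapsto W_2\otimes_{W_2^{(0)}}N$ by hand, proving that the counit is an isomorphism through the identities $\sum_j z_j\pd{j}=E$ and $\sum_j\pd{j}z_j=E+2$ and the nonvanishing of $\lambda+k$ for $k\in\Z$. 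This Euler-operator computation is exactly the content that the paper leaves implicit when it says both claims ``follow from Proposition~\ref{pp:localization}'' (conservativity of the degree-zero functor is not a formality: it is your statement $M=W_2\cdot M^{(0)}$), so your write-up makes explicit what the paper delegates to the localization machinery, at the cost of redoing by hand what that machinery provides for free. Your kernel trick for injectivity (apply generation-by-degree-zero to $\ker\epsilon_M$, whose degree-zero part vanishes) is sound, since $\epsilon_M$ is a degree-preserving morphism and the subcategory $\Mod(W_2)_\lambda$ is closed under kernels.
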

\begin{proof} We need to show that \eqref{eq:H0} is exact and that $H^0(\p1,M)=0$ implies $M=0$. Both claims follow from Proposition~\ref{pp:localization}.
\end{proof}

\subsection{Formal type for twisted $\D$-modules}
In a neighborhood of any $x\in\p1$, we can identify the sheaf $\D_\lambda$ with the untwisted sheaf $\D_\p1$. More precisely, consider the restriction $A_x\otimes\D_\lambda$
of $\D_\lambda$ to the formal disk centered at $x$. 

\begin{lemma}
There is an isomorphism
$$A_x\otimes\D_\lambda\iso\D_{A_x}$$
that acts tautologically on functions $A_x\subset D_\lambda$. The isomorphism
is unique up to conjugation by an invertible function. \qed
\label{lm:twisted formal type}
\end{lemma}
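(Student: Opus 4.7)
The plan is to pick a uniformizer $z\in A_x$, lift $\partial_z\in T_{A_x}$ to an element $D\in A_x\otimes\D_\lambda$, and use PBW for TDOs to turn the universal map $\D_{A_x}\to A_x\otimes\D_\lambda,\ \partial_z\mapsto D$ into an isomorphism. Uniqueness is then a short computation: two lifts differ by a function $g\in A_x$, and conjugation by $f:=\exp(\int g)\in A_x^\times$ identifies the two isomorphisms.

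\emph{Existence.} By the TDO axiom $\gr(\D_\lambda)=\mathrm{Sym}(T_{\p1})$, the symbol map $(A_x\otimes\D_\lambda)^{\le1}\to T_{A_x}=A_x\cdot\partial_z$ is surjective, so I can pick $D\in A_x\otimes\D_\lambda$ with $[D,f]=f'$ for all $f\in A_x$. The universal property of $\D_{A_x}$ yields a $\kk$-algebra map $\phi:\D_{A_x}\to A_x\otimes\D_\lambda$ with $\phi|_{A_x}=\mathrm{id}$ and $\phi(\partial_z)=D$. Since both sides are free left $A_x$-modules, on $\{\partial_z^i\}$ and $\{D^i\}$ respectively, by PBW for TDOs, $\phi$ is an isomorphism that acts tautologically on $A_x$.

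\emph{Uniqueness.} If $\phi_1,\phi_2$ are two such isomorphisms, set $\psi:=\phi_2\circ\phi_1^{-1}\in\mathrm{Aut}(\D_{A_x})$; by hypothesis $\psi$ fixes $A_x$ pointwise, and $[\psi(\partial_z),f]=f'=[\partial_z,f]$ forces $\psi(\partial_z)-\partial_z\in A_x$, say $\psi(\partial_z)=\partial_z-g$ with $g\in A_x$. A direct computation gives $f\partial_zf^{-1}=\partial_z-f'/f$ for any $f\in A_x^\times$, so solving $f'/f=g$ intertwines $\phi_1$ with $\phi_2$ by conjugation. Since $\mathrm{char}(\kk)=0$, the formal antiderivative $h:=\int g\in zA_x$ is well defined, and $f:=\exp(h)\in A_x^\times$ does the job.

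The only step with genuine content is PBW for the completed TDO $A_x\otimes\D_\lambda$, which is immediate because the filtration $\D_\lambda^{\le i}$ consists of coherent subsheaves and completion preserves the associated graded $\mathrm{Sym}(T_{A_x})$. Everything else reduces to linear algebra and the surjectivity of the logarithmic derivative $A_x^\times\to A_x,\ f\mapsto f'/f$, which is the triviality of the first de Rham cohomology of the formal disk. I do not anticipate any serious technical obstacles.
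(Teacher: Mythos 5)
Your proof is correct, and it supplies exactly the standard argument that the paper treats as self-evident (the lemma is stated with a \qed and no proof): lift $\partial_z$ through the symbol exact sequence of the completed TDO, use the filtration/PBW freeness to get the isomorphism, and reduce uniqueness to the surjectivity of the logarithmic derivative $f\mapsto f'/f$ on $A_x^\times$, which holds since $\exp$ and formal antidifferentiation make sense over $\kk$ of characteristic zero. The only cosmetic slip is that $\psi:=\phi_2\circ\phi_1^{-1}$ is an automorphism of $A_x\otimes\D_\lambda$ rather than of $\D_{A_x}$ (or take $\phi_1^{-1}\circ\phi_2$), and one should note explicitly that the centralizer of $A_x$ in $\D_{A_x}$ is $A_x$, which is what lets you write $\psi(\partial_z)=\partial_z-g$; neither point affects the argument.
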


If we choose an isomorphism of Lemma~\ref{lm:twisted formal type}, the functors of Section \ref{sc:curve} can be defined for twisted $\D$-modules. By Lemma~\ref{lm:twisted formal type}, different choices lead to isomorphic functors.

\begin{example} Any $M\in\DHol{\A1}$ can be viewed as a $\D_\lambda|_{\A1}$-module 
using the identification between $\D_\lambda|_{\A1}$ and $\D_\A1$. Therefore, besides the 
`untwisted' extension $\oM=j_*(M)\in\DHol{\p1}$, we have a twisted version 
$\oM_\lambda\in\DHol{\lambda}$. Then $\oM_\lambda=\oM\otimes c_\lambda$, 
where $c_\lambda$ is a rank one $\D_\lambda$-module with regular singularity at $\infty$ and no
other singularities. It is clear from this description that $$\Psi_x(\oM_\lambda)\simeq\Psi_x(\oM)\quad x\in\A1,$$
while $\Psi_\infty(\oM_\lambda)$ is shifted:
$$\Psi_x(\oM_\lambda)\simeq\Psi_x(\oM)\otimes\Psi_\infty(c_\lambda).$$
Note that $\Psi_\infty(c_\lambda)\simeq\cK_\infty^\lambda$.

Suppose now that $\kk=\C$, and assume that $M$ has regular singularities. Then instead of $\Psi_x(M)$, we can consider the monodromies $\rho_1,\dots,\rho_n\in GL(M_x)$ around the singularities of $M$ (this involves fixing a base point $x\in\p1$ and loops around the singularities). The monodromies satisfy the relation
$$\rho_1\cdots\rho_n=\id.$$ 
If we now consider $M$ as a $\D_\lambda$-modules, we can still define the 
monodromies $\rho_i$, but the relation is twisted:
$$\rho_1\dots\rho_n=\exp(2\pi\sqrt{-1}\lambda)\id.$$
Of course, this is consistent with Remark~\ref{rm:complex}.  
\label{ex:twist}
\end{example}

\subsection{Katz-Radon transform}\label{ssc:radon}
The Katz-Radon transform is an equivalence 
\begin{equation}
\label{eq:rad}
\Rad:\DMod{\lambda}\to\DMod{-\lambda}
\end{equation} 
that preserves holonomicity. We give several equivalent definitions below, but essentially there
are two approaches. If one works with twisted $\D$-modules, one can define the Radon transform on $\p{n}$ for any $n$ (which is constructed in \cite{AE}); the first three definitions
make sense in this context. The remaining two definitions restrict $\D$-modules to $\A1\subset\p1$. The twist is then eliminated, and the integral transform becomes Katz's middle
convolution (which is introduced in \cite{Ka}). This approach is specific for $n=1$.

Note that up to equivalence, $\DMod{\lambda}$ depends only on the image of $\lambda$ in $\kk/\Z$.

{\it Two-dimensional Fourier transform:} Let us use the equivalence of Proposition~\ref{pp:localization}. The Fourier transform gives an automorphism $\four:W_2\to W_2$ that inverts degree and 
acts on the Euler field as
$$\four\left(z_1\pd1+z_2\pd2\right)=-2-z_1\pd1-z_2\pd2.$$
It induces a functor on graded $W_2$-modules:
$$\Four:\Mod(W_2)_\lambda\to\Mod(W_2)_{-2-\lambda}=\Mod(W_2)_{-\lambda}.$$
This yields \eqref{eq:rad}.

{\it Involution of global sections:} It is easy to reformulate the above definition using the equivalence of Corollary~\ref{co:localization}.
We see that $\Rad$ is induced by the isomorphism:
$$\rad:H^0(\p1,\D_\lambda)\to H^0(\p1,\D_{-2-\lambda}):z_i\pd{j}\mapsto-\pd{i}z_j\quad(i,j=1,2).$$

{\it Integral transform:} The Fourier transform for $\D_\A2$-modules can be viewed as an
integral transform. In the case of twisted strongly $\gm$-equivariant $\D_\A2$-modules, this yields the following description of the Katz-Radon transform. 
Consider on $\p1\times\p1$ the TDO ring 
$$\D_{(-\lambda,-\lambda)}=p_1^\cdot\D_{-\lambda}\odot p_2^\cdot\D_{-\lambda}.$$
Here $p_{1,2}:\p1\times\p1\to\p1$ are the natural projections, and $p_1^\cdot$ (resp. $\odot$)
stands for the pull back (resp. Barr's sum) of TDO rings.
 
Let $\cK$ be a rank one $\D_{(-\lambda,-\lambda)}$-module with regular singularities along the diagonal ($\cK$ is smooth away from the diagonal). Actually, 
$\D_{(-\lambda,-\lambda)}$ is naturally isomorphic to $\D_{\p1\times\p1}$ away from
the diagonal; this allows us to define $\cK$ canonically). Then  
\begin{equation}
\Rad(M)=p_{2,*}(p_1^!(M)\otimes\cK).
\label{eq:integralradon}
\end{equation}

{\it Middle convolution:}
Suppose $M\in\DHol{\A1}$. Let us describe $\Rad(j_{!*}(M))|_{\A1}\in\DHol{\A1}$, where 
$j:\A1\hookrightarrow\p1$, and the Goresky-MacPherson extension $j_{!*}(M)$ is taken in the
sense of $\D_\lambda$-modules. Here we use the identification $\D_\lambda|_{\A1}\simeq\D_\A1$.  

\begin{remark*} Note that choosing $\infty\in\p1$ breaks the symmetry. On the other hand, a holonomic $\D_\lambda$-module can be obtained as a Goresky-MacPherson
extension from $\p1-\{\infty\}$ for almost all choices of $\infty\in\p1$, so this freedom of choice allows us to determine the Katz-Radon transform of any holonomic $\D_\lambda$-module.
 \end{remark*}

Let $\cK^\lambda\in\DHol{\A1}$ be the Kummer $\D$-module: it is a rank one sheaf whose only singularities are first-order poles at $0$ and $\infty$ with residues $\lambda$ and $-\lambda$, respectively. 
Consider $m:\A2\to\A1:(x,y)\mapsto y-x$ and let $j_2:\A2\hookrightarrow\p1\times\A1$ be the open embedding. Then $\cK|_\A2=m^!(\cK^\lambda)$; moreover,
$$(p_1^!(M)\otimes \cK)|_{\p1\times\A1}=j_{2,!*}(p_1^!(M)\otimes m^!(\cK^{\lambda})),$$ and \eqref{eq:integralradon} gives
\begin{equation}
\Rad(j_{!*}M)|_{\A1}=p_{2,*}(j_{2,!*}(p_1^!(M)\otimes m^!(\cK^\lambda))).\label{eq:middleconvolution}
\end{equation}
The right-hand side of \eqref{eq:middleconvolution} is called \emph{the additive middle convolution} $M\star_{mid}K$ of $M$ and $\cK^\lambda$. 
See \cite[Section 2.6]{Ka} for the notion of middle convolution on arbitrary group; \cite[Proposition 2.8.4]{Ka} shows that in the case of additive group, middle convolution can be defined by \eqref{eq:middleconvolution}.

\begin{remark} \label{rm:convoluters}
Suppose $M\in\DHol{U}$ for an open subset $U\subset\A1$. To extend
of $M$ to a $\D_\lambda$-module, we use an isomorphism $(\D_\lambda)|_U\simeq\D_U$. Generally
speaking, there are many choices of such isomorphism. We are using the
restriction of the isomorphism $(\D_\lambda)|_\A1\simeq\D_\A1$ from Section \ref{sc:TDOmodules}, however, it depends on the choice of $\infty\in\p1-U$.

In other words, there are canonical extension functors from the category of $(\D_\lambda)|_U$-modules. 
To $M\in\DHol{U}$, we associate a $(\D_\lambda)|_U$-module $M\otimes c_\lambda$, where $c_\lambda$ is a rank one $\D_\lambda$-module from Example~\ref{ex:twist}. However, we could
use any rank one $\D_\lambda$-module $c$ that is smooth on $U$. 

In the description of the Katz-Radon transform via the middle convolution, different choices of
the module $c$ correspond to the `convoluters' of \cite{Si}.
\end{remark}

\begin{remark} \label{rm:*convolution}
Similarly, $\Rad(j_*M)|_\A1$ for $M\in\DMod{\A1}$ can be described using the ordinary convolution (rather than the middle convolution). Namely,
$$
\Rad(j_*(M))|_\A1=M\star\cK^\lambda=p_{2,*}(p_1^!(M)\otimes m^!(\cK^\lambda)).
$$
As usual, the convolution can be rewritten using the Fourier transform:
\begin{equation}
\Rad(j_*(M))|_\A1=\Four^{-1}(\Four(M)\otimes\Four(\cK^\lambda)),\label{eq:fourierradon*}
\end{equation}
where $\Four^{-1}$ stands for the inverse Fourier transform. Note that $\Four(\cK^\lambda)\simeq\cK^{-\lambda}$.
\end{remark}

{\it One-dimensional Fourier transform:} Finally, one can rewrite the middle convolution using the Fourier transform, as in \cite[Section 2.10]{Ka}.

\begin{lemma}[{cf. \cite[Proposition~2.10.5]{Ka}}] 
For $M\in\DHol{\A1}$, there is a natural isomorphism
$$\Four(\Rad(j_{!*}M)|_\A1)=j_{U,!*}(\Four(M)|_U\otimes\cK^{-\lambda}|_U),$$
where $U=\A1-\{0\}$ and $j_U:U\hookrightarrow\A1$.
\label{lm:fourierradon}
\end{lemma}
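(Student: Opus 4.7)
The plan is to use the image characterization of the Goresky-MacPherson extension from Section~\ref{sc:!*} together with the convolution-Fourier description of $\Rad$ in Remark~\ref{rm:*convolution}. By definition, $j_{!*}M$ is the image of the canonical morphism $j_!M \to j_*M$. Since $\Rad$ and $\Four$ are equivalences of abelian categories (hence exact), and restriction of $\D_\lambda$-modules from $\p1$ to $\A1$ is exact, applying the composite functor $M'\mapsto\Four(\Rad(M')|_\A1)$ to the factorization $j_!M \twoheadrightarrow j_{!*}M \hookrightarrow j_*M$ yields
\begin{equation*}
\Four\bigl(\Rad(j_{!*}M)|_\A1\bigr) \simeq \im\Bigl(\Four\bigl(\Rad(j_!M)|_\A1\bigr) \to \Four\bigl(\Rad(j_*M)|_\A1\bigr)\Bigr).
\end{equation*}
The lemma is thereby reduced to identifying the two endpoints with $j_{U,!}$ and $j_{U,*}$ applied to the holonomic $\D_U$-module $\Four(M)|_U\otimes\cK^{-\lambda}|_U$.

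The $*$-endpoint is immediate from \eqref{eq:fourierradon*}: that identity reads $\Four(\Rad(j_*M)|_\A1)\simeq\Four(M)\otimes\cK^{-\lambda}$, and since $-\lambda\notin\Z$ the Kummer module $\cK^{-\lambda}$ has $z$ invertible in a neighborhood of $0$. Hence any $\D_{\A1}$-module of the form $N\otimes\cK^{-\lambda}$ has $z$-action invertible at $0$, and by Proposition~\ref{pp:functors} this characterizes it as the $*$-extension of its restriction to $U$, giving $\Four(\Rad(j_*M)|_\A1) \simeq j_{U,*}(\Four(M)|_U\otimes\cK^{-\lambda}|_U)$. For the $!$-endpoint, the analogous identity
\begin{equation*}
\Four\bigl(\Rad(j_!M)|_\A1\bigr) \simeq j_{U,!}\bigl(\Four(M)|_U\otimes\cK^{-\lambda}|_U\bigr)
\end{equation*}
can be established either by replacing $p_{2,*}$ with $p_{2,!}$ in the convolution formula of Remark~\ref{rm:*convolution} and rerunning the derivation of \eqref{eq:fourierradon*}, or by Verdier duality from the $*$-version, using $Dj_!=j_*D$, the compatibility $D\Rad \simeq \Rad D$ (under the exchange $\lambda \leftrightarrow -\lambda$), and $D\Four\simeq\Four D$ up to the involution $z\mapsto -z$ of $\A1$ (which preserves the isomorphism class of $\cK^{-\lambda}$).

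Substituting the two identifications into the image formula above, and invoking the definition of $j_{U,!*}$ as the image of the canonical morphism $j_{U,!}\to j_{U,*}$, yields the claim $\Four(\Rad(j_{!*}M)|_\A1) \simeq j_{U,!*}(\Four(M)|_U\otimes\cK^{-\lambda}|_U)$.

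The main obstacle is to verify that the natural transformation $j_!\to j_*$ is transported by $\Rad$, restriction to $\A1$, and $\Four$ to the canonical adjunction $j_{U,!}\to j_{U,*}$ on the Fourier side, rather than to a nonzero scalar multiple or a twist thereof. By the universal property of the middle extension, it is enough to check agreement after further restriction to $U$; since both identifications above are produced from \eqref{eq:fourierradon*} and its $!$-analogue via compatibilities with $j_U^*$, and on $U$ the three functors $j_!$, $j_{!*}$, $j_*$ coincide with $j^*$-inverse, the required compatibility should follow formally once the $!$-endpoint isomorphism is constructed by the explicit convolution route rather than by duality.
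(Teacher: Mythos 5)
Your strategy differs from the paper's: you write $j_{!*}M=\im(j_!M\to j_*M)$ and try to identify both endpoints after applying $\Rad$, restriction to $\A1$, and $\Four$, whereas the paper never introduces $j_!M$ at all. There, $j_{!*}M$ is characterized as the smallest submodule of $j_*M$ whose quotient is a direct sum of copies of $\delta_\infty$; since $\Rad$ is an equivalence, $\Rad(j_{!*}M)$ is the smallest submodule of $\Rad(j_*M)$ with quotient a sum of copies of $\Rad(\delta_\infty)$, whose restriction to $\A1$ is constant; Fourier turns constant modules into sums of $\delta_0$'s, and then only the $*$-formula \eqref{eq:fourierradon*} is needed. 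Your $*$-endpoint is fine (and is essentially this): since $\lambda\notin\Z$, $z$ acts invertibly on $\cK^{-\lambda}$, hence on $\Four(M)\otimes\cK^{-\lambda}$, which is therefore the $*$-extension across $0$ of its restriction to $U$.

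The genuine gap is the $!$-endpoint, $\Four(\Rad(j_!M)|_{\A1})\simeq j_{U,!}(\Four(M)|_U\otimes\cK^{-\lambda}|_U)$: you assert it, and neither suggested route is actually available. ``Replacing $p_{2,*}$ by $p_{2,!}$ in Remark~\ref{rm:*convolution}'' is not well-posed: $\Rad$ is a fixed functor (the two-dimensional Fourier transform, equivalently the kernel transform defined with $p_{2,*}$), so you cannot change its definition and still be computing $\Rad(j_!M)$; what you need is a new statement, namely that $\Rad(j_!M)|_{\A1}$ is computed by the $!$-convolution $p_{2,!}(p_1^!(M)\otimes m^!(\cK^\lambda))$ and that $\Four$ carries this $!$-convolution to the $!$-extension of the tensor product — neither is proved in the paper, and the latter requires knowing how $\Four$ interchanges $*$- and $!$-direct images. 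The duality route needs $D\Rad\simeq\Rad D$ (with the twist $\lambda\mapsto-2-\lambda$ identified with $-\lambda$), $D\Four\simeq\Four D$ up to $z\mapsto -z$, and $D(\cK^\lambda)\simeq\cK^{-\lambda}$; these are true but are established nowhere in the paper, so invoking them leaves a real hole. (By contrast, your final worry about the transported map $j_!\to j_*$ being the canonical adjunction is harmless: the image of any map $j_{U,!}L\to j_{U,*}L$ restricting to an isomorphism over $U$ is $j_{U,!*}L$, since that image restricts to $L$ and, being a quotient of $j_{U,!}L$, has no quotient supported at $0$.) The cheapest repair is to drop $j_!$ altogether and argue entirely inside $j_*M$, as the paper does, using only \eqref{eq:fourierradon*}.
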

\begin{proof} By definition, $j_{!*}M$ is the smallest submodule of $j_*M$ such that
the quotient is a direct sum of copies of $\delta_\infty$ (the $\D$-module of $\delta$-functions at infinity). Therefore, $\Rad(j_{!*}M)$ is the smallest submodule of
$\Rad(j_*M)$ such that the quotient is a direct sum of copies of $\Rad(\delta_\infty)$. This
implies that $\Rad(j_{!*}M)|_\A1$ is the smallest submodule of $\Rad(j_*M)|_\A1$ such that
the quotient is a constant $\D$-module (because $\Rad(\delta_\infty)|_\A1$ is constant).
Now it suffices to use \eqref{eq:fourierradon*}.
\end{proof}

\subsection{Properties of Katz-Radon transform}
Let us prove the properties of the Katz-Radon transform similar to the properties of the Fourier transform established in Section~\ref{sc:fourier}.

\begin{proposition}\label{pp:rankradon} For $M\in\DHol{\lambda}$,
\begin{align*}
\rk(\Rad(M))&=\sum_{x\in\p1(\overline\kk)}(\rk(\Phi_x(M))+\irreg(\Psi_x(M)))-\rk(M)\\
&=\sum_{x\in\p1}[\kk_x:\kk](\rk(\Phi_x(M))+\irreg(\Psi_x(M)))-\rk(M)
\end{align*}
\end{proposition}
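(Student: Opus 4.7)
The plan is to mimic the proof of Proposition~\ref{pp:rankfourier}: compute $\rk(\Rad M)$ as $-\chi_{dR}$ of a de Rham complex on $\p1$, then apply the Euler--Poincar\'e formula. Using the integral transform description $\Rad(M)=p_{2,*}(p_1^!(M)\otimes\cK)$ together with proper base change for $p_2:\p1\times\p1\to\p1$, the fibre of $\Rad(M)$ at a point $y_0\in\p1(\overline\kk)$ is $R\Gamma_{dR}(\p1,M\otimes\cK^{(y_0)})[1]$, where $\cK^{(y_0)}:=\cK|_{\p1\times\{y_0\}}$. By construction $\cK^{(y_0)}$ is a rank-one $\D_{-\lambda}$-module, canonically trivial on $\p1-\{y_0\}$ with a regular (Kummer) singularity of residue $\lambda$ at $y_0$; in the tensor product $N:=M\otimes\cK^{(y_0)}$ the twists $\pm\lambda$ cancel, so $N$ is an honest holonomic $\D_{\p1}$-module of generic rank $\rk(M)$.

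For any $y_0$ at which $M$ is smooth, $N$ carries non-trivial Kummer monodromy at $y_0$ (as $\lambda\notin\Z$), so any section of $N$ horizontal near $y_0$ vanishes there and hence globally; this forces $H^0_{dR}(\p1,N)=0$, and the same argument applied to $D(N)$ (whose Kummer residue at $y_0$ is $-\lambda$, still non-integral) gives $H^2_{dR}(\p1,N)=0$. Hence $\rk(\Rad M)=\dim H^1_{dR}(\p1,N)=-\chi_{dR}(\p1,N)$, which by Proposition~\ref{pp:EulerPoincare} (with $g=0$) equals $-2\rk(M)+\sum_{x\in\p1(\overline\kk)}(\rk\Phi_x(N)+\irreg\Psi_x(N))$. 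The local terms then match those of $M$ point by point: for $x\in\A1-\{y_0\}$, $\cK^{(y_0)}$ is smooth at $x$ so tensoring preserves $\rk\Phi_x$ and $\irreg\Psi_x$; at $x=y_0$, $N$ is locally the unique extension of a sum of $\rk(M)$ copies of $\cK^\lambda$, and Lemma~\ref{lm:Phi!*} gives $\rk\Phi_{y_0}(N)=\rk(M)$ with $\irreg\Psi_{y_0}(N)=0$; at $x=\infty$, the fact that $\cK^{(y_0)}$ is smooth as a $\D_{-\lambda}$-module means that, after passing to the ordinary $\D$-module structure, it contributes a Kummer factor of residue $-\lambda$ that exactly cancels the twist built into the trivialization $A_\infty\otimes\D_\lambda\simeq\D_{A_\infty}$ of Lemma~\ref{lm:twisted formal type}, yielding $\Psi_\infty(N)\simeq\Psi_\infty(M)$ in $\DHol{K_\infty}$ and contribution $\rk\Phi_\infty(M)+\irreg\Psi_\infty(M)$.

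Summing and using that $M$ is smooth at $y_0$ (so $y_0$ contributes nothing extra to the $M$-side) gives $\rk(\Rad M)=-\rk(M)+\sum_{x\in\p1(\overline\kk)}(\rk\Phi_x(M)+\irreg\Psi_x(M))$, which is the first form of the formula; the second is the usual rewriting by Galois orbits. The main obstacle will be the bookkeeping at $\infty$: one has to verify carefully that the cancellation $\D_\lambda\otimes\D_{-\lambda}\simeq\D_0$ interacts with the local trivializations of Lemma~\ref{lm:twisted formal type} so that $\Psi_\infty(N)$ (read off as an ordinary $K_\infty$-module) is genuinely isomorphic to $\Psi_\infty(M)$ (defined via the $\D_\lambda$-trivialization), rather than being off by an additional Kummer twist. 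Once that compatibility is in place, the argument is essentially parallel to the Fourier case.
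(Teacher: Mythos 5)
Your proposal is correct and follows essentially the same route as the paper's proof: the fibre of $\Rad(M)$ at a generic point is computed as $H^1_{dR}$ of $M$ tensored with the rank-one Kummer-type kernel, $H^0$ and $H^2$ vanish there because $\lambda\notin\Z$, and the Euler--Poincar\'e formula (Proposition~\ref{pp:EulerPoincare}) finishes the computation. The only difference is that you spell out the local bookkeeping (the term at $y_0$ contributing $\rk(M)$, the cancellation of twists at the other points, including $\infty$), which the paper leaves implicit, and this bookkeeping is carried out correctly.
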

\begin{proof} 
Using the description of $\Rad$ as an integral transform, we see that the fiber of
$\Rad(M)$ at $x\in\p1(\overline\kk)$ equals $H^1(\p1\otimes\overline\kk,M\otimes\ell)$, where rank one local system $\ell\in\Hol{\D_{-\lambda}\otimes\overline\kk}$ is smooth on $\p1\otimes\overline\kk-\{x\}$ and has 
a first-order pole at $x$. For generic $x$,
$H^0(\p1\otimes\overline\kk,M\otimes\ell)=H^2(\p1\otimes\overline\kk,M\otimes\ell)=0$, so that
$$\rk(\Rad(M))=-\chi_{dR}(\p1\otimes\overline\kk,M\otimes\ell).$$ 
The proposition now follows from the Euler-Poincar\'e formula (Proposition~\ref{pp:EulerPoincare}).
\end{proof}

Let us construct the local Katz-Radon transform.
\begin{proof}[Proof of Theorem~\ref{th:localradon}] 
We use Remark \ref{rm:*convolution}. Choose $\infty\in\p1-\{x\}$, and consider on $\A1=\p1-\{\infty\}$ the $\D_\A1$-module $M_!=j_{x*}(\jo_!M)$. For the embedding 
$j:\A1\hookrightarrow\p1$, we have
$$\Four(\Rad(j_*M_!)|_\A1)=\Four(M_!)\otimes\Four(\cK^{\lambda}).$$
However, $$\Four(M_!)=\jo_{\infty*}(\Four(x,\infty)M)$$
by Theorem~\ref{th:localfourier}, and therefore
\begin{equation}
\label{eq:rad!}
\Rad(j_*M_!)|_\A1=j_{x*}\jo_!
(\Four(x,\infty)^{-1}(\Four(x,\infty)(M)\otimes\Psi_\infty(\Four(\cK^{\lambda})))).
\end{equation}
Note that $\Psi_\infty(\Four(\cK^{\lambda}))\simeq\cK^\lambda_\infty\in\DHol{K_\infty}$;
recall that $\cK^\lambda_\infty$ stands for a rank one local system with regular singularity and residue
$\lambda$ at $\infty$.
Since \eqref{eq:rad!} holds for any choice of $\infty\in\p1-\{x\}$, we see that
\begin{equation}
\Rad(x,x)(M)\simeq\Four(x,\infty)^{-1}(\Four(x,\infty)(M)\otimes\cK^\lambda_\infty).
\label{eq:localradon}
\end{equation}
\end{proof}

\begin{corollary}\label{co:localradon} For $M\in\DHol{\p1}$ and $x\in\DHol{\p1}$,
we have a natural isomorphism
$$\Rad(x,x)\Phi_x(M)\iso \Phi_x(\Rad(M)).$$

In particular, $\Phi_x(M)=0$ (that is, $M$ is smooth at $x$) if and only if $\Phi_x(\Rad(M))=0$.
\end{corollary}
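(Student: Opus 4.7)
The plan is to mirror the adjunction-based argument that deduces Corollary~\ref{co:localfourier} from Theorem~\ref{th:localfourier}, now using Theorem~\ref{th:localradon} in place of Theorem~\ref{th:localfourier}. Three ingredients are needed: the adjunction $(\Phi_x,j_{x*}\jo_!)$ of Corollary~\ref{co:cycles}\eqref{it:cycles2}, extended to twisted $\D$-modules via the local identification $\D_\lambda\simeq\D$ at $x$ from Lemma~\ref{lm:twisted formal type}; the fact that $\Rad$ and $\Rad(x,x)$ are equivalences; and the functorial isomorphism $\Rad\circ j_{x*}\jo_!\simeq j_{x*}\jo_!\circ\Rad(x,x)$ supplied by Theorem~\ref{th:localradon}.

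Given these, the argument is a short chain of natural bijections of Hom-spaces, closed off by Yoneda. For any $N\in\DHol{K_x}$ one computes
\begin{align*}
\Hom_{\D_{K_x}}\bigl(\Phi_x(\Rad(M)),N\bigr)
&\simeq\Hom_{\D_{-\lambda}}\bigl(\Rad(M),j_{x*}\jo_!(N)\bigr)\\
&\simeq\Hom_{\D_\lambda}\bigl(M,\Rad^{-1}(j_{x*}\jo_!(N))\bigr)\\
&\simeq\Hom_{\D_\lambda}\bigl(M,j_{x*}\jo_!(\Rad(x,x)^{-1}(N))\bigr)\\
&\simeq\Hom_{\D_{K_x}}\bigl(\Phi_x(M),\Rad(x,x)^{-1}(N)\bigr)\\
&\simeq\Hom_{\D_{K_x}}\bigl(\Rad(x,x)(\Phi_x(M)),N\bigr).
\end{align*}
Lines one and four use the adjunction, lines two and five use that $\Rad$ and $\Rad(x,x)$ are equivalences, and line three is obtained by inverting the functorial iso of Theorem~\ref{th:localradon}. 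Yoneda in $\DHol{K_x}$ then produces the claimed natural isomorphism $\Rad(x,x)(\Phi_x(M))\iso\Phi_x(\Rad(M))$.

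The second assertion is then immediate: since $\Rad(x,x)$ is an auto-equivalence, $\Rad(x,x)(\Phi_x(M))=0$ iff $\Phi_x(M)=0$, and the main isomorphism transfers this to $\Phi_x(\Rad(M))=0$ iff $\Phi_x(M)=0$. The parenthetical identification with smoothness at $x$ is read off from the sequence~\eqref{eq:sequence_phi}: vanishing of $\Phi(j_x^*M)$ forces $j_x^*M$ to be a direct sum of copies of $A_x$, which is exactly formal smoothness at $x$. The only place where care is needed is the transfer of the $(\Phi_x,j_{x*}\jo_!)$-adjunction to the twisted setting, but this is a purely local statement at $x$ and follows directly from Lemma~\ref{lm:twisted formal type}; I do not anticipate any real obstacle.
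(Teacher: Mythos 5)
Your argument is correct and is essentially the paper's own proof: the paper deduces Corollary~\ref{co:localradon} precisely by combining Theorem~\ref{th:localradon} with the adjunction $(\Phi_x,\,j_{x*}\jo_!)$ of Corollary~\ref{co:cycles}\eqref{it:cycles2}, which is exactly your Hom-chain plus Yoneda, with the twisted-vs-untwisted issue handled by Lemma~\ref{lm:twisted formal type} as you note. The only difference is cosmetic: the paper also records an alternative derivation via Lemma~\ref{lm:fourierradon}, which you do not need.
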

\begin{proof}
Combine Theorem~\ref{th:localradon} and Corollary~\ref{co:cycles}\eqref{it:cycles2}.
Alternatively, we can derive it from the corresponding property of the local Fourier transform (Corollary~\ref{co:localfourier}) by Lemma~\ref{lm:fourierradon}
\end{proof}
Let $L$ be a local system on open subset $U\subset\A1$. Using the identification $
\D_\lambda|_{\A1}\simeq\D_\A1$, we define the twisted Goresky-MacPherson extension
$M=j_{!*}(L)\in\DHol{\lambda}$ for $j=j_U:U\hookrightarrow\p1$. 
The formal type of $L$ is completely described by
the collection $$\left(\{[\Phi_x(M)]\}_{x\in\p1},\rk(M)\right).$$

Suppose that $\Rad(M)$ is also a twisted Goresky-MacPherson extension $\Rad(M)=j_{!*}
(\tL)$
for a local system $\tL$ defined on the same open set $U$ (see Corollary~\ref{co:localradon}). Using Proposition~\ref{pp:rankradon} and Corollary~\ref{co:localradon}, we can determine the formal type of $\tL$ given the formal type of $L$. 
This allows us to relate isotypical deformations of $L$ to those of $\tL$.

\begin{corollary} Let $L$ and $\tL$ be as above. 
\begin{enumerate}
\item For any local Artinian ring $R$, there is a one-to-one correspondence between isotypical
deformations of $L$ and of $\tL$ given by
$$L_R\mapsto \Rad(j_{!*}L_R)|_U.$$

\item $L$ is rigid if and only if $\tL$ is rigid. 

\item $\rig(L)=\rig(\tL).$
\end{enumerate}
\label{co:isotypicalradon}
\end{corollary}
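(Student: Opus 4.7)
The plan is to follow the template of the proof of Corollary~\ref{co:isotypicalfourier}, replacing the local Fourier transform by the local Katz-Radon transform and using Corollary~\ref{co:localradon} in place of Corollary~\ref{co:localfourier}. Set $M = j_{!*}(L)$, $\tM = \Rad(M) = j_{!*}(\tL)$, and for an isotypical $R$-deformation $L_R$ of $L$, set $M_R = j_{!*}(L_R)$ and $\tM_R = \Rad(M_R)$.

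For part~\eqref{it:iso1} of the analog (i.e., that $\tL_R := \tM_R|_U$ is isotypical), the first step is to observe that since $L_R$ is isotypical, $\Psi_x(L_R) \simeq \Psi_x(L) \otimes_\kk R$ for every $x \in \p1$; Lemma~\ref{lm:Phi!*} then yields $\Phi_x(M_R) \simeq \Phi_x(M) \otimes_\kk R$ for every $x \in \p1$. Next, apply Corollary~\ref{co:localradon} (with $\Rad(x,x)$ being $\kk$-linear and exact) to transport this to
\[
\Phi_x(\tM_R) \;\simeq\; \Rad(x,x)\Phi_x(M_R) \;\simeq\; \Rad(x,x)\Phi_x(M) \otimes_\kk R \;\simeq\; \Phi_x(\tM) \otimes_\kk R.
\]
Since $\Rad$ is an equivalence (hence exact and preserves $R$-flatness), $\tM_R$ is an $R$-flat deformation of $\tM$, so $\Psi_x(\tM_R)$ is a flat deformation of $\Psi_x(\tM)$ for every $x$. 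The step from $\Phi_x$-isomorphism to $\Psi_x$-isomorphism is local at $x$: reduce to the unipotent part via the decomposition of Section~\ref{sc:proofs}, and then read off the conclusion from the explicit formula \eqref{eq:!*}, exactly as in the proof of Corollary~\ref{co:isotypicalfourier}.

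For part~\eqref{it:iso2}, the inverse map is constructed by applying the same argument to the inverse Katz-Radon transform $\DHol{-\lambda}\to\DHol{\lambda}$; what must be checked is that $\tM_R = j_{!*}(\tM_R|_U)$, which is a local statement in $\DHol{A_x}$ and again follows from \eqref{eq:!*} together with the flatness of $\Phi_x(\tM_R)$ and $\Psi_x(\tM_R)$ over $R$ established above. Part~\eqref{it:iso3} follows by applying part~\eqref{it:iso2} to $R = \kk[\epsilon]/(\epsilon^2)$ in conjunction with Lemma~\ref{lm:deformation}. For the stronger equality $\rig(L) = \rig(\tL)$ of part~(3), the plan is to produce natural isomorphisms
\[
H^i_{dR}\bigl(\p1, j_{!*}\END(L)\bigr) \;\iso\; H^i_{dR}\bigl(\p1, j_{!*}\END(\tL)\bigr), \qquad i = 0,1,2,
\]
exactly as in Corollary~\ref{co:rigidityfourier}: the case $i=1$ is the tangent space identification from \eqref{it:iso2}; the case $i=0$ is $\End(M) \iso \End(\tM)$ furnished by the Katz-Radon equivalence; the case $i=2$ follows from $i=0$ by Verdier duality.

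The main obstacle, as in the Fourier case, is the passage from the identification of the $\Phi_x$'s (which falls out tautologically from Corollary~\ref{co:localradon}) to the identification of the full $\Psi_x$'s that is needed to verify isotypicality. This requires an honest local computation in $\DHol{A_x}$: after reducing to the unipotent-monodromy block via the regular/irregular and unipotent/non-unipotent decompositions of Section~\ref{sc:proofs}, one uses the explicit quadruple description $(V,V',\alpha,\beta)$ and the formula \eqref{eq:!*} for $\jo_{!*}$ to recover $\Psi_x$ from $\Phi_x$ together with the $R$-deformation datum. Everything else is formal.
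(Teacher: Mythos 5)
Your proposal is correct and follows essentially the route the paper itself indicates: its proof of Corollary~\ref{co:isotypicalradon} simply says the argument is analogous to Corollaries~\ref{co:isotypicalfourier} and~\ref{co:rigidityfourier}, with Corollary~\ref{co:localradon} playing the role of Corollary~\ref{co:localfourier}, which is exactly what you spell out (the paper also notes the alternative of deducing it from the Fourier corollaries via Lemma~\ref{lm:fourierradon}, but that is not needed).
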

\begin{proof}
Analogous to Corollaries \ref{co:isotypicalfourier} and \ref{co:rigidityfourier}. It can
also be derived from these corollaries using Lemma~\ref{lm:fourierradon}.
\end{proof}

\begin{remarks*}
Unlike the Fourier transform, the Katz-Radon transform preserves regularity
of singularities (\cite{Ka}). 
This follows immediately from its description as an integral transform.

The definition of the local Katz-Radon transform is not completely canonical, because we
needed the isomorphism of Lemma~\ref{lm:twisted formal type} (this is somewhat similar to
Remark~\ref{rm:convoluters}). For this reason, $\Rad(x,x)$ is defined only up to a
non-canonical isomorphism. Equivalently, $\Rad(x,x)$ is naturally defined as a functor between categories of twisted $\D_{K_x}$-modules, and the twists can be eliminated, but not canonically. 

The Katz-Radon transform makes sense for twisted $\D$-modules on a twisted form $X$ of $\p1$ (that is, $X$ can be a smooth rational irreducible projective curve without $\kk$-points). 
In this case, one cannot interpret the Katz-Radon transform using the middle convolution or the Fourier transform without extending scalars.  
\end{remarks*}

\section{Calculation of local Katz-Radon transform}\label{sc:computeradon}
In this section, we prove Theorem~\ref{th:computeradon}.  

\subsection{Powers of differential operators}
Informally speaking, we prove Theorem~\ref{th:computeradon} by looking at powers
$\partial_z^\alpha$ of derivation for $\alpha\in\kk$. We start with the following 
observation about
operators $\kk((z))\to\kk((z)).$

Suppose $P:\kk((z))\to\kk((z))$ is a $\kk$-linear operator of the form
$$P\left(\sum_\beta c_\beta z^\beta\right)=\sum_\beta c_\beta\sum_{i\ge 0}
p_i(\beta) z^{\beta+d+i}.$$
Here $d$ is a fixed integer (the degree of $P$ with respect to the natural filtration).
Up to reindexing, $p_i(\beta)\in\kk$ are the entries of the infinite matrix corresponding to $P$.

The powers of $P$ can be written in the same form:
\begin{equation}
P^\alpha\left(\sum_\beta c_\beta z^\beta\right)=\sum_\beta c_\beta\sum_{i\ge 0}p_i(\alpha,\beta) z^{\beta+\alpha d+i},
\label{eq:powers}
\end{equation}
where $\alpha$ is a non-negative integer. For instance, $p_i(1,\beta)=p_i(\beta)$, and
$p_i(0,\beta)=0$ if $i>0$.

Suppose that $P$ satisfies the following two conditions:
\begin{enumerate}
\item \label{cn:1} $p_0(\beta)=1$; 
\item \label{cn:2} $p_i(\beta)$ is polynomial in $\beta$.
\end{enumerate}
\begin{remark*} If the degrees of polynomials $p_i(\beta)$ are uniformly bounded, $P$ is 
a differential operator. In general, the second condition means that $P$ is a `differential
operator of infinite degree'.
\end{remark*}

\begin{lemma} If $P$ satisfies \eqref{cn:1} and \eqref{cn:2}, then $p_i(\alpha,\beta)$ is a polynomial in $\alpha$ and $\beta$ for all $i$.
\label{lm:polynomialcoefficients}
\end{lemma}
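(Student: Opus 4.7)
Proposal: I will prove this by induction on $i$, deriving a recursion in $\alpha$ from the composition $P^{\alpha+1} = P \circ P^\alpha$, and using the classical fact that a function on $\mathbb{Z}_{\ge 0}$ whose consecutive difference is polynomial (and that vanishes at $\alpha = 0$ for $i>0$) is itself polynomial.

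Concretely, applying $P$ to both sides of \eqref{eq:powers} and collecting the coefficient of $z^{\beta + (\alpha+1)d + i}$ gives the recursion
\begin{equation*}
p_i(\alpha+1,\beta) \;=\; \sum_{j=0}^{i} p_j(\alpha,\beta)\, p_{i-j}(\beta + \alpha d + j).
\end{equation*}
Since $p_0(\gamma) = 1$ for all $\gamma$, the $j = i$ term on the right is simply $p_i(\alpha,\beta)$, so
\begin{equation*}
p_i(\alpha+1,\beta) - p_i(\alpha,\beta) \;=\; \sum_{j=0}^{i-1} p_j(\alpha,\beta)\, p_{i-j}(\beta + \alpha d + j).
\end{equation*}
Taking $i = 0$ and using $p_0(0,\beta) = 1$ (since $P^0 = \mathrm{Id}$) yields $p_0(\alpha,\beta) \equiv 1$, a polynomial. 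This is the base case.

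For the inductive step, assume $p_j(\alpha,\beta)$ is a polynomial in $(\alpha,\beta)$ for all $j < i$. Then, combined with hypothesis \eqref{cn:2} that each $p_{i-j}(\gamma)$ is polynomial in $\gamma$, the right-hand side of the recursion is a polynomial $Q_i(\alpha,\beta)$ in $\alpha$ and $\beta$. Thus, for fixed $\beta$ and $\alpha \in \mathbb{Z}_{\ge 0}$, the sequence $\alpha \mapsto p_i(\alpha,\beta)$ satisfies a first-order linear difference equation with polynomial forcing and the initial condition $p_i(0,\beta) = 0$ (again because $P^0 = \mathrm{Id}$ and $i > 0$). Hence
\begin{equation*}
p_i(\alpha,\beta) \;=\; \sum_{k=0}^{\alpha-1} Q_i(k,\beta),
\end{equation*}
and the standard fact that $\sum_{k=0}^{n-1} k^m$ is polynomial in $n$ (Faulhaber) implies that this sum is polynomial in $\alpha$ as well as $\beta$. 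This completes the induction.

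There is no real obstacle: the only nontrivial point is the interpretation of "polynomial in $\alpha$" for a quantity a priori defined only on non-negative integer $\alpha$, which is handled by observing that a function on $\mathbb{Z}_{\ge 0}$ is determined by a polynomial as soon as its iterated differences are eventually zero, and here the inductive structure forces exactly this. The proof also makes clear how later one may reinterpret $P^\alpha$ for non-integer $\alpha$ by declaring $p_i(\alpha,\beta)$ to be its polynomial extension, which is presumably the point of the lemma in view of the $\partial_z^\alpha$ heuristic that opens the section.
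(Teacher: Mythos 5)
Your proof is correct and is essentially the paper's own argument: induction on $i$, using $P^{\alpha+1}=P\cdot P^{\alpha}$ to show the first difference $p_i(\alpha+1,\beta)-p_i(\alpha,\beta)$ is polynomial (the $j=i$ term cancels because $p_0\equiv1$), then concluding via discrete summation with the initial value $p_i(0,\beta)=0$. Your write-up just makes the final Faulhaber-type summation step explicit, which the paper leaves implicit.
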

\begin{proof} Proceed by induction in $i$. The base is $p_0(\alpha,\beta)=1$.
Suppose we already know that $p_0(\alpha,\beta),\dots,p_{i-1}(\alpha,\beta)$ are polynomials.
The identity $P^\alpha=P\cdot P^{\alpha-1}$ implies that
$$p_i(\alpha,\beta)=\sum_{j=0}^i p_{i-j}(\beta+(\alpha-1)d+j)p_j(\alpha-1,\beta).$$
By the induction hypothesis, $p_i(\alpha,\beta)-p_i(\alpha-1,\beta)$ is a polynomial in $\alpha$ and $\beta$.
Finally, $p_i(0,\beta)$ is a polynomial in $m$, and the lemma follows.
\end{proof}

Lemma~\ref{lm:polynomialcoefficients} allows us to define powers $P^\alpha$ for all $\alpha\in\kk$ in the following sense. For any $\gamma\in\kk$, consider the one-dimensional vector space 
$$z^\gamma\kk((z))=\left.\left\{\sum_{i=-k}^\infty c_{\gamma+i} z^{\gamma+i}\right\st \text{$k$ is not fixed}\right\}$$ over
$\kk((z))$. Of course, $z^\gamma\kk((z))$ depends only on the image of $\gamma$ in $\kk/\Z$.
Note that $\frac{d}{dz}$ acts on $z^\gamma\kk((z))$; the corresponding $\D$-module is the 
Kummer local system.

For any $\alpha,\gamma\in\kk$, we define the operator 
$$P^\alpha:z^\gamma\kk((z))\to z^{\gamma+d\alpha}\kk((z))$$
by \eqref{eq:powers}. We can prove algebraic identities involving $P^\alpha$ by 
rewriting them in terms of $p_i(\alpha,\beta)$ and then verifying them for integers $\alpha,\beta$. Here is an important example.

\begin{corollary} For any $\alpha',\alpha''$, we have $P^{\alpha'+\alpha''}=P^{\alpha'}\cdot P^{\alpha''}$. (The domain of the operators is $z^\gamma\kk((z))$ for any $\gamma\in\kk$.) 
\label{co:powers}
\end{corollary}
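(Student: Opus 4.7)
The plan is to reduce the identity $P^{\alpha'+\alpha''}=P^{\alpha'}P^{\alpha''}$ to a polynomial identity in the three variables $\alpha',\alpha'',\beta$ and then verify it on a Zariski-dense subset.

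First I would compute the action of both sides on a basis element $z^\beta\in z^\gamma\kk((z))$. Applying \eqref{eq:powers} directly, the coefficient of $z^{\beta+d(\alpha'+\alpha'')+i}$ in $P^{\alpha'+\alpha''}z^\beta$ equals $p_i(\alpha'+\alpha'',\beta)$; on the other hand, applying \eqref{eq:powers} twice and collecting terms yields
\[
P^{\alpha'}P^{\alpha''}z^\beta=\sum_{i\ge 0}\left(\sum_{j=0}^{i}p_{i-j}(\alpha',\beta+d\alpha''+j)\,p_j(\alpha'',\beta)\right)z^{\beta+d(\alpha'+\alpha'')+i}.
\]
So Corollary~\ref{co:powers} reduces to checking the family of identities
\[
p_i(\alpha'+\alpha'',\beta)=\sum_{j=0}^{i}p_{i-j}(\alpha',\beta+d\alpha''+j)\,p_j(\alpha'',\beta)\qquad(i\ge 0).
\]

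By Lemma~\ref{lm:polynomialcoefficients}, each $p_k(\alpha,\beta)$ is a polynomial in $(\alpha,\beta)$, so both sides of the displayed identity are polynomials in $(\alpha',\alpha'',\beta)\in\kk^3$. It therefore suffices to verify the identity on a Zariski-dense subset. I would take $\Z_{\ge 0}\times\Z_{\ge 0}\times\kk$: for non-negative integers $\alpha',\alpha''$, the operators $P^{\alpha'}$, $P^{\alpha''}$, and $P^{\alpha'+\alpha''}$ defined by \eqref{eq:powers} on $z^\gamma\kk((z))$ coincide with honest iterated compositions of $P$ (the formula \eqref{eq:powers} is manifestly compatible with composition at integer exponents), so the identity reduces to associativity of composition of $\kk$-linear operators and is automatic.

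The main obstacle has already been dealt with, namely Lemma~\ref{lm:polynomialcoefficients}, which ensures the polynomial dependence on $\alpha$. Once that is in hand, the proof is a formal extension-by-polynomiality argument: the semigroup law is tautological for $\alpha',\alpha''\in\Z_{\ge 0}$, and Zariski density propagates it to all of $\kk^2$. No well-definedness issues arise on $z^\gamma\kk((z))$ because \eqref{eq:powers} only shifts $\beta$ within the coset $\gamma+\Z$ (plus a continuous shift by $d\alpha$), so both sides land in $z^{\gamma+d(\alpha'+\alpha'')}\kk((z))$ as required.
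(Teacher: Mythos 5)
Your proof is correct and essentially reproduces the paper's argument: reduce to the coefficient identity $p_i(\alpha'+\alpha'',\beta)=\sum_{j=0}^i p_{i-j}(\alpha',\beta+\alpha''d+j)p_j(\alpha'',\beta)$, invoke Lemma~\ref{lm:polynomialcoefficients}, and conclude by Zariski density. The only cosmetic difference is the choice of dense set: the paper checks the identity on integer triples $(\alpha',\alpha'',\beta)$, where it holds purely by the definition of $p_i(\alpha,\beta)$, while your set $\Z_{\ge 0}\times\Z_{\ge 0}\times\kk$ requires the small additional remark that for integer exponents the formula \eqref{eq:powers} on $z^\gamma\kk((z))$ agrees with honest iterated composition even for non-integer $\beta$ (again by polynomiality in $\beta$); restricting $\beta$ to integers sidesteps this.
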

\begin{proof} In terms of $p_i(\alpha,\beta)$, we have to show that
\begin{equation}
p_i(\alpha'+\alpha'',\beta)=\sum_{j=0}^i p_{i-j}(\alpha',\beta+\alpha''d+j)p_j(\alpha'',\beta)
\label{eq:ppowers}
\end{equation}
for all $i$.
Both sides of \eqref{eq:ppowers} belong to $\kk[\alpha',\alpha'',\beta]$, therefore it suffices
to verify \eqref{eq:powers} on the Zariski dense set
$$\{(\alpha',\alpha'',\beta)\st\text{$\alpha'$, $\alpha''$, and $\beta$ are integers, $\alpha',\alpha''\ge 0$}\},$$
where it holds by definition.
\end{proof}

\subsection{Proof of Theorem~\ref{th:computeradon}}
Let us start with some simplifying assumptions.
First, consider the maximal unramified extension 
$K^{unr}_x\supset K_x$. If $z$ is a local coordinate at $x$,
$K^{unr}_x=\overline{\kk}((z))$.
The isomorphism class of an object
$M\in\DHol{K_x}$ is determined by the isomorphism class of its image in $\DHol{K^{unr}_x}$. 
Therefore, we can assume without losing generality that $\kk$ is algebraically closed.
Also, it suffices to prove Theorem~\ref{th:computeradon} for irreducible $M\in\DHol{K_x}$. 

Choose a coordinate $z$ on $\p1$ such that $x$ is given by $z=0$. By \eqref{eq:localradon},
we need to show that
\begin{equation}
(\Four(0,\infty)M)\otimes\cK_\infty^\lambda=\Four(0,\infty)\left(M\otimes\cK_0^{\lambda(1+\slope(M))}\right)
\label{eq:computeradon}
\end{equation}
for irreducible $M\in\DHol{K_0}$.

Our final assumption is that $M$ is irregular: $\slope(M)>0$. In the case of regular singularities, Theorem~\ref{th:computeradon} was proved by N.~Katz in \cite{Ka}. 
Indeed, in this case $M\simeq\cK_0^\alpha$ for some $\alpha\in\kk$, and \eqref{eq:computeradon}
follows from \eqref{eq:kummer}.

Let us use the well-known description of irreducible local systems on a punctured disk
(see for instance \cite[Theorem III.1.2]{Mal}). It implies that there is an isomorphism
$$M\simeq\kk((z^{1/r}))$$
for a ramified extension $\kk((z^{1/r}))\supset K_0$ such that the derivation on $M$ is given by
\begin{equation}
\partial_z=\frac{d}{dz}+f(z)
\label{eq:derivation}
\end{equation} 
for some $f(z)$ of the form 
\begin{equation}
f(z)=C z^{-\slope(M)-1}+\dots\in\kk((z^{1/r})),\quad C\in\kk-\{0\}.
\label{eq:mu}
\end{equation}

For any $\gamma\in\kk$, consider the vector space 
$$z^\gamma\kk((z^{1/r}))$$. Equip it with the derivation \eqref{eq:derivation}; the resulting $\D_{K_0}$-module is $M\otimes\cK^\gamma_0$.

Consider the operator 
$$P=\frac{1}{C}\partial_z=\frac{1}{C}\left(\frac{d}{dz}+f(z)\right):\kk((z^{1/r}))\to\kk((z^{1/r})),$$
where $C$ is the leading coefficient of $f$ as in \eqref{eq:mu}. 
Lemma~\ref{lm:polynomialcoefficients} applies to $P$, so we can define powers
$$P^\alpha:z^\gamma\kk((z^{1/r}))\to z^{\gamma-(1+\slope(M))\alpha}\kk((z^{1/r})).$$
In particular, for $\gamma=0$, we obtain a $\kk$-linear map
$$P^\alpha:M\to M\otimes\cK_0^{-(1+\slope(M))\alpha}.$$ 
Its properties are summarized below.

\begin{proposition} For any $\alpha\in\kk$,
\begin{enumerate}
\item\label{it:P1} $P^\alpha$ is invertible;
\item\label{it:P2} $P^\alpha\partial_z=\partial_z P^\alpha$;
\item\label{it:P3} $P^\alpha z=z P^\alpha+\frac{\alpha}{C} P^{\alpha-1}$.
\end{enumerate}
\label{pp:powers}
\end{proposition}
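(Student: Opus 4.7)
The plan is to verify that $P$ fits the framework of the preceding subsection and then deduce each of the three properties in turn. First, working with uniformizer $z^{1/r}$, the operator $P=\frac{1}{C}(\frac{d}{dz}+f(z))$ applied to a basis element $z^\beta$ has leading term $z^{\beta-\slope(M)-1}$ with coefficient $p_0(\beta)=1$ (by our normalization by $C$), while the remaining coefficients $p_i(\beta)$ are polynomials in $\beta$ of degree at most one (the $\beta$-dependence comes only from $\frac{1}{C}\frac{d}{dz}$). Hence Lemma~\ref{lm:polynomialcoefficients} applies, and $P^\alpha$ is well defined as a $\kk$-linear operator on each $z^\gamma\kk((z^{1/r}))$ for all $\alpha,\gamma\in\kk$; moreover, Corollary~\ref{co:powers} gives $P^{\alpha'+\alpha''}=P^{\alpha'}P^{\alpha''}$.

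Parts \eqref{it:P1} and \eqref{it:P2} are then essentially formal. Invertibility follows from $P^\alpha P^{-\alpha}=P^0=\id$. For commutativity with $\partial_z$, note that the coefficients $p_i(\beta)$ defining $P$ do not depend on $\gamma$, so $\partial_z=CP$ on every $z^\gamma\kk((z^{1/r}))$; therefore $\partial_z P^\alpha=CP^{\alpha+1}=P^\alpha\partial_z$ by Corollary~\ref{co:powers}.

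For \eqref{it:P3}, I would first establish the identity for $\alpha\in\Z_{\ge 0}$ by induction. The base case $\alpha=1$ is the elementary commutator $[P,z]=\frac{1}{C}[\partial_z,z]=\frac{1}{C}$, using $[\frac{d}{dz},z]=1$ and $[f(z),z]=0$. The induction step is the short computation
\begin{equation*}
P^{\alpha+1}z=P\bigl(zP^\alpha+\tfrac{\alpha}{C}P^{\alpha-1}\bigr)=\bigl(zP+\tfrac{1}{C}\bigr)P^\alpha+\tfrac{\alpha}{C}P^\alpha=zP^{\alpha+1}+\tfrac{\alpha+1}{C}P^\alpha.
\end{equation*}

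To upgrade to arbitrary $\alpha\in\kk$, I would apply both sides of \eqref{it:P3} to a basis vector $z^\beta\in z^\gamma\kk((z^{1/r}))$ and compare coefficients: by Lemma~\ref{lm:polynomialcoefficients} each coefficient is a polynomial in $\alpha$ and $\beta$, and it vanishes on the Zariski-dense subset of pairs of non-negative integers $(\alpha,\beta)$, hence identically. The only real obstacle is this bookkeeping step of assembling the coefficients as polynomials in $\alpha$ and $\beta$; the substantive content of the proposition is the trivial commutator $[P,z]=\frac{1}{C}$ together with the polynomial-extension mechanism already developed in Lemma~\ref{lm:polynomialcoefficients} and Corollary~\ref{co:powers}.
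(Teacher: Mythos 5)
Your proof is correct and follows essentially the same route as the paper: parts \eqref{it:P1} and \eqref{it:P2} are deduced from Corollary~\ref{co:powers}, and part \eqref{it:P3} is verified for non-negative integer $\alpha$ by the commutator computation and then extended to all $\alpha\in\kk$ by the polynomiality argument of Lemma~\ref{lm:polynomialcoefficients}. You simply spell out the induction and the Zariski-density step that the paper leaves implicit.
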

\begin{proof}
\eqref{it:P1} follows from Corollary~\ref{co:powers}; the inverse map is $P^{-\alpha}$.

\eqref{it:P2} follows from Corollary~\ref{co:powers}, because $\partial_z=C\cdot P^1$.

\eqref{it:P3} can be proved by the same method as Corollary~\ref{co:powers} by first verifying
it when $\alpha$ is a positive integer.
\end{proof}

Consider now the local Fourier transform $\Four(0,\infty)M$. Denote by $\zeta=\frac{1}{z}$ the coordinate at $\infty\in\p1$. As described in Section~\ref{sc:explicit},
$\cF(0,\infty)M$ coincides with $M$
as a $\kk$-vector space; the action of $\zeta$ (resp. the derivation $\partial_\zeta$) is 
given by
$-\partial_z^{-1}$ (resp. $-\partial_z^2z$).

$P^\alpha$ can thus be viewed as a $\kk$-linear map
$$\Four(0,\infty)M\to\Four(0,\infty)(M\otimes\cK_0^{-(1+\slope(M))\alpha}).$$
Proposition~\ref{pp:powers} implies that $P^\alpha$ is a $\kk((\zeta))$-linear isomorphism
that satisfies
$$\partial_\zeta P^\alpha=-\partial_z^2zP^\alpha=
-P^\alpha\partial_z^2z+\frac{\alpha}{C}\partial^2_z P^{\alpha-1}=
P^\alpha\partial_\zeta+\alpha P^\alpha\partial_z=P^\alpha\left(\partial_\zeta-\frac{\alpha}{\zeta}\right).$$
In other words, $P^\alpha$ yields an isomorphism of $\D$-modules
$$\Four(0,\infty)(M)\otimes\cK_\infty^{-\alpha}\iso\Four(0,\infty)\left(M\otimes\cK_0^{-(1+\slope(M))\alpha}\right).$$
Setting $\alpha=-\lambda$ gives \eqref{eq:computeradon}. This completes the proof of 
Theorem~\ref{th:computeradon}. \qed

\nocite{*}
\bibliographystyle{abbrv}
\bibliography{localfourierrefs}
\end{document}